\numberwithin{equation}{section}
\theoremstyle{plain}
\newtheorem{thm}{Theorem}[section]
\newtheorem{lem}[thm]{Lemma}
\newtheorem{prop}[thm]{Proposition}
\newtheorem{cor}[thm]{Corollary}
\theoremstyle{definition}
\newtheorem{defi}[thm]{Definition}
\theoremstyle{remark}
\newtheorem{rem}[thm]{Remark}
\begin{document}

\title[Variational problems for integral invariants]
{Variational problems for integral invariants of the second fundamental form of a map between pseudo-Riemannian manifolds}

\author[R. Akiyama \and T. Sakai \and Y. Sato]{Rika Akiyama \and Takashi Sakai \and Yuichiro Sato}

\address{Department of Mathematical Sciences,
Tokyo Metropolitan University,
Minami-Osawa 1-1, Hachioji, Tokyo, 192-0397, Japan}

\email{akiyama-rika@ed.tmu.ac.jp}

\address{Department of Mathematical Sciences,
Tokyo Metropolitan University,
Minami-Osawa 1-1, Hachioji, Tokyo, 192-0397, Japan}

\email{sakai-t@tmu.ac.jp}

\address{Academic Support Center, Kogakuin University, 
Nakano-cho, 2665-1, Hachioji, Tokyo, 192-0015, Japan}

\email{kt13699@ns.kogakuin.ac.jp}

\subjclass[2020]{Primary 58E20; Secondary 53C43}

\keywords{biharmonic map, first variational formula, integral invariant}

\thanks{
This work was partly supported by Osaka City University Advanced Mathematical Institute :
MEXT Joint Usage/Research Center on Mathematics and Theoretical Physics JPMXP0619217849.
The first author was partly supported by JST SPRING, Grant Number JPMJSP2156.
The second author was partly supported by JSPS KAKENHI Grant Numbers 21K03250.
The third author was partly supported by Foundation of Research Fellows, The Mathematical Society of Japan.} 

\begin{abstract}
We study variational problems for integral invariants,
which are defined as integrations of invariant functions of the second fundamental form,
of a smooth map between pseudo-Riemannian manifolds.
We derive the first variational formulae for integral invariants
defined from invariant homogeneous polynomials of degree two.
Among these integral invariants,
we show that the Euler--Lagrange equation of the Chern--Federer energy functional
is reduced to a second order PDE.
Then we give some examples of Chern--Federer submanifolds in Riemannian space forms.
\end{abstract}

\maketitle

%%%%%%%%%%%%%%%%%%%%%%%%%%%%%%%%%%%%%%%%%%%%%%%%%%%%%%%%%%%%%%%%%%%%%%%%%%%
\section{Introduction}
\label{sec:introduction}

The theory of harmonic maps and biharmonic maps is one of the important fields in differential geometry.
Recall that a smooth map $\varphi : (M,g_M) \rightarrow (N,g_N)$ between Riemannian manifolds
is said to be \textit{harmonic} if it is a critical point of the energy functional
\[
E(\varphi ) = \frac{1}{2}\int _M |d\varphi |^2 d\mu _{g_M}.
\]
By the first variational formula,
then $\varphi$ is a harmonic map if and only if
\begin{equation} \label{Euler-Lagrange equation of harmonic map}
\tau (\varphi ) = \mathrm{tr}_{g_M} (\widetilde{\nabla } d\varphi) = 0,
\end{equation}
where $\widetilde{\nabla} d\varphi$ is the second fundamental form
and $\tau (\varphi)$ is the tension field  of $\varphi$.
The Euler--Lagrange equation (\ref{Euler-Lagrange equation of harmonic map}) is a second order nonlinear PDE,
therefore the theory of harmonic maps has been developed in geometric analysis,
furthermore it is investigated applying methods of integrable systems.
As a generalization of harmonic maps,
Eells and Lemaire \cite{EL} introduced the notion of \textit{biharmonic map},
which is a critical point of the bienergy functional
\[
E_2(\varphi )= \frac{1}{2} \int _M |\tau (\varphi )| ^2 d\mu _{g_M}.
\]
Jiang \cite{Jiang} showed that $\varphi$ is a biharmonic map if and only if
\[
\tau _2(\varphi ) = -\overline{\nabla }^*\overline{\nabla }\tau (\varphi ) - \mathrm{tr}_{g_M} R^N\left( d\varphi (\cdot), \tau (\varphi ) \right) d\varphi (\cdot) = 0,
\]
where $-\overline{\nabla }^*\overline{\nabla } $ is the rough Laplacian
and $R^N$ is the Riemannian curvature tensor of $(N,g_N)$.
By definition, it is clear that a harmonic map is biharmonic.
One of the important problems in the study of biharmonic maps is Chen's conjecture,
that is, an arbitrary biharmonic submanifold of a Euclidean space must be minimal.

On the other hand, in integral geometry,
Howard \cite{Howard} provided integral invariants of submanifolds
by using invariant polynomials of the second fundamental form,
and then he formulated the kinematic formula in Riemannian homogeneous spaces (see also \cite{KSS}).
In his formulation, there are some notable integral invariants of submanifolds.
One is integral invariants in the Chern--Federer kinematic formula.
These integral invariants played significant roles in differential geometry.
For example,
Weyl \cite{Weyl} showed that the volume of a tube around a compact submanifold in a Euclidean space
can be represented as a polynomial of the radius of the tube,
where the coefficients are integral invariants of the second fundamental form of the submanifold.
Also, Allendoerfer and Weil \cite{AW} used these integral invariants
to describe the extended Gauss--Bonnet theorem,
and this leads to the development of the theory of characteristic classes.
Another notable one is the integral invariant defined from
a certain invariant homogeneous polynomial of degree two.
This invariant polynomial also appears in the definition of the Willmore-Chen invariant,
which is a conformal invariant of submanifolds (\cite{Chen1, Chen2}).

In Section~\ref{sec:integral invariants},
with an idea of integral geometry,
we introduce integral invariants of a smooth map $\varphi : (M,g_M) \rightarrow (N,g_N)$ between pseudo-Riemannian manifolds
by using invariant functions of the second fundamental form of $\varphi$.
In particular, we focus on integral invariants of $\varphi$ defined from invariant homogeneous polynomials of degree two.
The space of those polynomials is spanned by
the square norm of the second fundamental form and the square norm of the tension field,
which are denoted by $\mathcal{Q}_1$ and $\mathcal{Q}_2$ respectively.
Hence, here the family of integral invariants includes the bienergy functional.
In this paper, we study variational problems for these integral invariants of $\varphi$.
In Section~\ref{sec:first variational formulae},
we derive the first variational formulae for $\mathcal{Q}_1$- and $\mathcal{Q}_2$-energy functionals.
By the linearity, then we have the first variational formulae for all integral invariants of degree two.
Note that it implies an alternative expression of the Euler--Lagrange equation of the bienergy functional.
As mentioned above, from the viewpoint of integral geometry,
there are two notable polynomials, called the Chern--Federer polynomial and the Willmore--Chen polynomial,
in the space of invariant homogeneous polynomials of degree two.
In Section~\ref{sec:CF maps},
we discuss some properties of the Chern--Federer energy functional from the viewpoint of variational problems.
The Euler--Lagrange equation of an integral invariant of degree two is a fourth order PDE in general,
however, we show that the Euler--Lagrange equation of the Chern--Federer energy functional is reduced to a second order PDE.
In Section~\ref{subsec:alternative expression 2},
we describe a symmetry of the Euler--Lagrange equation of the Chern--Federer energy functional
comparing with a symmetry of the Chern--Federer polynomial.
In Section~\ref{sec:CF submanifolds},
we give some examples of Chern--Federer submanifolds in Riemannian space forms.
Here, a Chern--Federer submanifold is the image of an isometric immersion which is a Chern--Federer map.
For an isometric immersion into a Riemannian space form,
a necessary and sufficient condition to be a Chern--Federer map is described in Theorem~\ref{CF_isom_imm}.
Considering this condition,
there is an obstruction for the domain manifold.
In addition, as a trivial example,
we can see that any isometric immersion of a Ricci-flat manifold into a Euclidean space is a Chern--Federer map.
Finally, we discuss isometric immersions of flat tori into the $3$-sphere
and isoparametric hypersurfaces in Riemannian space forms.

%%%%%%%%%%%%%%%%%%%%%%%%%%%%%%%%%%%%%%%%%%%%%%%%%%%%
\section{Integral invariants of a map between pseudo-Riemannian manifolds}
\label{sec:integral invariants}

In this section, we define integral invariants of the second fundamental form of a map between pseudo-Riemannian (or semi-Riemannian) manifolds. 
An $m$-dimensional pseudo-Euclidean space with index $p$ is denoted by $\mathbb{E}^m_p = (\mathbb{R}^m,\left\langle , \right\rangle )$ with $\left\langle x, y\right\rangle = -\sum _{i=1}^p x_i y_i + \sum_{j=p+1}^m x_jy_j\; (x,y\in \mathbb{R}^m)$. 
Define $\mathrm{II}(\mathbb{E}^m_p,\mathbb{E}^n_q)$ to be
\[
\mathrm{II}(\mathbb{E}^m_p,\mathbb{E}^n_q) := \left\{ H : \mathbb{E}^m_p \times \mathbb{E}^m_p \rightarrow \mathbb{E}^n_q \; ;\; \text{symmetric bilinear map} \right\},
\]
which is a $\frac{1}{2}nm(m+1)$-dimensional vector space. 
Let $G$ be the direct product group of pseudo-orthogonal groups defined by
\[
G:= O(p,m-p)\times O(q,n-q). 
\]
The group $G$ acts on $\mathrm{II}(\mathbb{E}^m_p,\mathbb{E}^n_q)$, 
that is for $g=(a,b)\in G$ and $H\in \mathrm{II}(\mathbb{E}^m_p,\mathbb{E}^n_q)$ then $gH$ is given by
\[
(gH) (u,v) := b\left( H(a^{-1}u, a^{-1}v) \right) \quad \quad (u,v\in \mathbb{E}^m_p). 
\]
Then a function $\mathcal{P}$ on $\mathrm{II}(\mathbb{E}^m_p,\mathbb{E}^n_q)$ is said to be \textit{$G$-invariant}
if $\mathcal{P}(gH) = \mathcal{P}(H)$ for all $g\in G$ and $H \in \mathrm{II}(\mathbb{E}^m_p,\mathbb{E}^n_q)$.

Let $(M^m_p,g_M)$ and $(N^n_q,g_N)$ be pseudo-Riemannian manifolds,
and $\varphi : M \rightarrow N$ a $C^{\infty}$-map.
Thoughout this paper, a fiber metric on a vector bundle is also denoted by $\langle , \rangle$.
The second fundamental form of the map $\varphi $ is the symmetric bilinear map
$\widetilde{\nabla }d\varphi : \Gamma(TM)\times \Gamma(TM) \rightarrow \Gamma(\varphi ^{-1}TN)$ defined by
\[
(\widetilde{\nabla }d\varphi )(X,Y) :=\overline{\nabla }_X\left( d\varphi (Y) \right) - d\varphi \left( \nabla _XY \right)
\]
for any vector fields $X,Y\in \Gamma (TM)$, 
which is a section of $\bigodot ^2T^*M\otimes \varphi ^{-1}TN$. 
Here $\nabla$ is the Levi--Civita connection on the tangent bundle $TM$ of $(M_p^m, g_M)$. 
$\overline{\nabla }$ and $\widetilde{\nabla }$ are the induced connections 
on the bundles $\varphi ^{-1}TN$ and $T^*M\otimes \varphi^{-1}TN$. 
If $\varphi $ is an isometric immersion, then we have
\[
(\widetilde{\nabla }d\varphi )(X,Y) = \nabla '_{d\varphi (X)}d\varphi (Y) - d\varphi \left( \nabla _XY \right) = \nabla '_XY- \nabla _XY,
\]
where $\nabla '$ is the Levi--Civita connection on the tangent bundle $TN$ of $(N_q^n, g_N)$, 
\textrm{i.e.} the second fundamental form of the isometric immersion $\varphi $ agrees with the second fundamental form of the submanifold.  

For each $x\in M$, we can write
\[
( \widetilde{\nabla }d\varphi )_x : T_xM\times T_xM\rightarrow T_{\varphi (x)}N,
\]
which is a symmetric bilinear map. 
Let $\{e_i\} _{i=1}^m$ be a pseudo-orthonormal basis of $T_xM$, 
$\{e^i\} _{i=1}^m$ the dual basis of $\{ e_i \}$, 
and $\{ \xi _\alpha \} _{\alpha =1}^n$ a pseudo-orthonormal basis of $T_{\varphi (x)}N$. 
Hence we identify $T_xM$ and $T_{\varphi (x)}N$ with $\mathbb{E}^m_p$ and $\mathbb{E}^n_q$, respectively. 
Then $(\widetilde{\nabla}d\varphi)_x$ can be expressed as
\[
( \widetilde{\nabla }d\varphi )_x = \sum_\alpha \varepsilon '_{\alpha } \sum _{i,j} h_{ij}^\alpha \; e^i\odot e^j\otimes \xi _\alpha ,
\]
where $h_{ij}^\alpha $ is defined by
\[
h_{ij}^\alpha = \big\langle ( \widetilde{\nabla }d\varphi ) _x(e_i,e_j), \xi _\alpha \big\rangle,
\]
and
\[
\varepsilon'_\alpha = 
\begin{cases}
-1 & (\alpha =1,\cdots ,q) \\
 1 & (\alpha = q+1,\cdots ,n).
\end{cases}
\]
Thus we have a linear isomorphism between $T_x^*M\odot T_x^*M\otimes T_{\varphi (x)}N$ and $\mathrm{II}(\mathbb{E}^m_p,\mathbb{E}^n_q)$. 
That is, $( \widetilde{\nabla }d\varphi ) _x \in T_x^*M\odot T_x^*M\otimes T_{\varphi (x)}N $ corresponds to $H_x:=( h_{ij}^\alpha ) \in \mathrm{II}(\mathbb{E}^m_p,\mathbb{E}^n_q)$. 
Therefore, for a $G$-invariant function $\mathcal{P}$ on $\mathrm{II}(\mathbb{E}^m_p,\mathbb{E}^n_q)$, we define an \textit{invariant function} of the second fundamental form of $\varphi $ as follows:
\[
\mathcal{P}\big( ( \widetilde{\nabla }d\varphi )_x \big) := \mathcal{P}(H_x). 
\]
This definition 
does not depend on the choices of $\{e_i\} _{i=1}^m$ and $\{ \xi _\alpha \} _{\alpha =1}^n$ 
since $\mathcal{P}$ is $G$-invariant and a change of a basis is the action of the pseudo-orthogonal group. 
Also, $\mathcal{P}\big( ( \widetilde{\nabla }d\varphi ) _x \big)$ is a smooth function on $M$. 
\begin{defi}
Let $(M^m_p,g_M)$ be an $m$-dimensional compact pseudo-Riemannian manifold with index $p$, 
$(N^n_q,g_N)$ an $n$-dimensional pseudo-Riemannian manifold with index $q$, 
and $\mathcal{P}$ a $G$-invariant function on $\mathrm{II}(\mathbb{E}^m_p,\mathbb{E}^n_q)$. 
Then for a smooth map $\varphi : M \rightarrow N$, we define
\[
I^{\mathcal{P}}(\varphi ) := \int _M \mathcal{P}\big( ( \widetilde{\nabla }d\varphi )_x \big) d\mu _{g_M}. 
\]
We call $I^{\mathcal{P}}(\varphi)$ the \textit{integral invariant} of $\varphi$ with respect to $\mathcal{P}$.
\end{defi}
By definition,
$I^{\mathcal{P}}(\varphi )$ is an invariant of a map $\varphi$ between pseudo-Riemannian manifolds,
that is, $I^{\mathcal{P}}(g\circ \varphi \circ f^{-1}) = I^{\mathcal{P}}(\varphi)$ holds 
for any $f\in \mathrm{Isom}(M)$ and $g\in \mathrm{Isom}(N)$. 

We consider the following $G$-invariant polynomials on $\mathrm{II}(\mathbb{E}^m_p,\mathbb{E}^n_q)$. 
For $H=( h_{ij}^\alpha ) \in \mathrm{II}(\mathbb{E}^m_p,\mathbb{E}^n_q)$, define
\begin{align*}
\mathcal{Q}_1(H) := \sum _\alpha \varepsilon ' _\alpha \sum_{i,j} \varepsilon _i \varepsilon _j (h_{ij}^\alpha ) ^2 \quad \text{and} \quad \mathcal{Q}_2(H) := \sum _\alpha \varepsilon '_\alpha \left( \sum _i \varepsilon _i h_{ii}^\alpha \right) ^2
\end{align*}
with
\[
\varepsilon_i =
\begin{cases}
-1 & (i=1,\cdots, p) \\
 1 & (i=p+1,\cdots, m).
\end{cases}
\]
$\mathcal{Q}_1(H)$ and $\mathcal{Q}_2(H)$ are $G$-invariant homogeneous polynomials of degree two on $\mathrm{II}(\mathbb{E}^m_p,\mathbb{E}^n_q)$. 

\begin{defi}
For $\varphi \in C^{\infty }(M,N)$, 
the \textit{$\mathcal{Q}_1$-energy functional} $I^{\mathcal{Q}_1}(\varphi )$ 
and the \textit{$\mathcal{Q}_2$-energy functional} $I^{\mathcal{Q}_2}(\varphi )$ are defined by
\begin{align}
I^{\mathcal{Q}_1}(\varphi ) = \int _M \mathcal{Q}_1\big( (\widetilde{\nabla }d\varphi )_x \big) d\mu _{g_M} = \int _M \left\langle \widetilde{\nabla }d\varphi , \widetilde{\nabla }d\varphi  \right\rangle d\mu _{g_M} \label{Q_1}
\end{align}
and
\begin{align}
I^{\mathcal{Q}_2}(\varphi ) = \int _M \mathcal{Q}_2 \big( (\widetilde{\nabla }d\varphi )_x\big) d\mu_{g_M} = \int _M \left\langle \mathrm{tr}_{g_M} (\widetilde{\nabla }d\varphi ) , \mathrm{tr}_{g_M} (\widetilde{\nabla }d\varphi ) \right\rangle d\mu _{g_M} \label{Q_2}.
\end{align}
Then $\varphi $ is called a \textit{$\mathcal{Q}_1$-map} if it is a critical point of $I^{\mathcal{Q}_1}(\varphi )$. 
Also, then $\varphi $ is called a \textit{$\mathcal{Q}_2$-map} if it is a critical point of $I^{\mathcal{Q}_2}(\varphi )$.
\end{defi}

\begin{rem}
The $\mathcal{Q}_2$-energy functional $I^{\mathcal{Q}_2}(\varphi)$ is equal to two times of the bienergy functional $E_2(\varphi )$. 
Indeed, when $\varphi $ is a smooth map between Riemannian manifolds, 
it holds that
\[
I^{\mathcal{Q}_2}(\varphi ) = \int _M \left\langle \mathrm{tr}_{g_M}(\widetilde{\nabla }d\varphi ), \mathrm{tr}_{g_M}(\widetilde{\nabla }d\varphi ) \right\rangle d\mu _{g_M} = \int _M \left| \mathrm{tr}_{g_M} (\widetilde{\nabla }d\varphi ) \right|^2 d\mu _{g_M} =  2E_2(\varphi ).
\]
\end{rem}

\begin{rem}
When $\dim M=4$, 
the $\mathcal{Q}_1$-energy functional and $\mathcal{Q}_2$-energy functional are invariant under homothetic changes of the metric on the domain $M$. 
\end{rem}

%%%%%%%%%%%%%%%%%%%%%%%%%%%%%%%%%%%%%%%%%%%%%%%%%%%%%%%%%%%%%%%%%%%%%
\section{The first variational formulae of $\mathcal{Q}_1$-energy and $\mathcal{Q}_2$-energy}
\label{sec:first variational formulae}

\subsection{Preliminaries}
Let $(M^m_p,g_M)$ be an $m$-dimensional compact pseudo-Riemannian manifold with index $p$, 
$(N^n_q, g_N)$ an $n$-dimensional pseudo-Riemannian manifold with index $q$, 
and $\varphi : M \rightarrow N$ a $C^{\infty}$-map. 
In this section, we use the following notation. 

A local pseudo-orthonormal frame field of $(M^m_p,g_M)$ is a set of $m$-local vector fields $\{ e_i \} _{i=1}^m$ such that 
$g_M(e_i,e_j) = \varepsilon _i\delta _{ij}$
with $\varepsilon _1=\cdots =\varepsilon _p=-1$, 
$\varepsilon _{p+1}=\cdots =\varepsilon _m=1$.

$\widetilde{\nabla } ^2d\varphi $ and $\widetilde{\nabla }^3d\varphi$ are defined by
\[
(\widetilde{\nabla }^2d\varphi )(X,Y,Z) := \overline{\nabla }_X\big( (\widetilde{\nabla }d\varphi ) (Y,Z) \big) - ( \widetilde{\nabla }d\varphi )\left( \nabla_XY,Z \right) - (\widetilde{\nabla }d\varphi )\left( Y,\nabla _XZ \right) 
\]
and
\begin{align*}
(\widetilde{\nabla }^3d\varphi )(X,Y,Z,W) &:= \overline{\nabla }_X\big( (\widetilde{\nabla } ^2d\varphi )(Y,Z,W) \big) - (\widetilde{\nabla }^2d\varphi )(\nabla _XY,Z,W) \\
&\quad  - (\widetilde{\nabla }^2d\varphi )(Y, \nabla _XZ,W) - (\widetilde{\nabla }^2d\varphi )\left( Y,Z,\nabla _XW \right)
\end{align*}
for any vector fields $X,Y,Z,W\in \Gamma (TM)$. 
$\widetilde{\nabla }^2d\varphi$ and $\widetilde{\nabla }^3d\varphi $ are sections of $\bigotimes^3T^*M\otimes \varphi ^{-1}TN$ and $\bigotimes ^4T^*M\otimes \varphi ^{-1}TN$, respectively. 
By definition, $\widetilde{\nabla }^2d\varphi $ has the following symmetry
\[
(\widetilde{\nabla }^2d\varphi )(X,Y,Z) = (\widetilde{\nabla }^2d\varphi )(X,Z,Y).
\]

The tension field $\tau (\varphi )$ of $\varphi $ is defined by
\[
\tau (\varphi ) = \mathrm{tr}_{g_M} (\widetilde{\nabla }d\varphi ) = \sum _i \varepsilon _i(\widetilde{\nabla }d\varphi ) (e_i,e_i) = \sum _i \varepsilon _i \big( \widetilde{\nabla }_{e_i} d\varphi \big) (e_i) .
\]
If $\varphi $ is an isometric immersion, then its tension field is equal to $m$ times of the mean curvature vector field.

In general,
the curvature tensor field $R^E$ of a connection $\nabla^E$ on the bundle $E$ over $M$ is defined by
\[
R^E(X,Y):= \nabla^E_X \nabla^E_Y - \nabla^E_Y \nabla^E_X - \nabla^E_{[X,Y]} \quad (X,Y \in \Gamma (TM)).
\]
In particular, for the curvature tensor field $\widetilde{R}$ of the induced connection $\widetilde{\nabla}$
on the bundle $T^*M\otimes \varphi ^{-1}TN$, we have
\begin{align*}
\big( \widetilde{R}(X,Y) d\varphi \big) (Z) &= R^{\varphi ^{-1}TN} (X,Y)d\varphi (Z) - d\varphi \left( R^M(X,Y)Z \right) \\
&= R^N\left( d\varphi (X),d\varphi (Y) \right) d\varphi (Z) - d\varphi \left( R^M(X,Y)Z \right) \quad (X,Y,Z\in \Gamma (TM)),
\end{align*}
where $R^M$, $R^N$ and $R^{\varphi ^{-1}TN}$ are the curvature tensor fields on $TM$, $TN$ and $\varphi ^{-1}TN$, respectively. 

Then we derive the first variational formulae of the $\mathcal{Q}_1$-energy and $\mathcal{Q}_2$-energy separately.

%%%%%%%%%%%%%%%%%%%%%%%%%%%%%%%%%%%%%%%%%%%%%%%%%%%%%%%%%%%%%%%%%%%%%%%%%%%
\subsection{The first variational formula of $\mathcal{Q}_1$-energy}
\label{subsec:Q_1}

We consider a smooth variation $\{ \varphi _t \} _{t\in I}\; (I:=(-\varepsilon , \varepsilon))$ of $\varphi $,
that is we consider a smooth map $\Phi $ given by
\[
\Phi :M\times I \rightarrow N, \quad (x,t) \mapsto \Phi (x,t) =: \varphi _t(x)
\]
such that $\varphi _0(x)=\varphi (x)$ for all $x\in M$,
and denote by $V$ its variational vector field, that is 
\[
V= d\Phi \left( \left. \frac{\partial }{\partial t}\right|_{t=0} \right) \in \Gamma (\varphi ^{-1}TN) .
\]
We denote by $\bm{\nabla}$, $\bm{\overline{\nabla}}$ and $\bm{\widetilde{\nabla}}$ the induced connections on $T(M\times I)$, $\Phi^{-1}TN$ and $T^*(M\times I) \otimes \Phi ^{-1}TN$, respectively. 
Let $\{ e_i\} _{i=1}^m$ be a local pseudo-orthonormal frame field on a neighborhood $U$ of $x\in M$, 
then $\left\{ e_i, \frac{\partial }{\partial t} \right\}$ is a pseudo-orthonormal frame field on the neighborhood $U\times I$
of $(x,t) \in M \times I$,
and it holds that
\[
\bm{\nabla } _{\frac{\partial }{\partial t}} \frac{\partial }{\partial t} =0,\quad 
\bm{\nabla } _{\frac{\partial }{\partial t}}e_i = \bm{\nabla } _{e_i}\frac{\partial }{\partial t} =0 \quad (1\leq i\leq m) .
\]

First, we can write the formula (\ref{Q_1}) as 
\[
I^{\mathcal{Q}_1}(\varphi ) = \int _M \left\langle \widetilde{\nabla }d\varphi , \widetilde{\nabla }d\varphi \right\rangle d\mu _{g_M} = \int _M \sum _{i,j} \varepsilon_i\varepsilon _j \left\langle \big( \widetilde{\nabla }d\varphi \big) (e_i,e_j) , \big( \widetilde{\nabla }d\varphi \big) (e_i,e_j) \right\rangle d\mu _{g_M} .
\]
For a variation $\{ \varphi _t \} _{t\in I}$ of $\varphi $, it holds that
\begin{align}
\frac{d}{dt} I^{\mathcal{Q}_1}(\varphi _t) &= \frac{d}{dt} \int _M \sum _{i,j} \varepsilon _i\varepsilon _j \left\langle \big( \bm{\widetilde{\nabla }}d\Phi \big) (e_i,e_j) , \big( \bm{\widetilde{ \nabla}} d\Phi \big) (e_i,e_j) \right\rangle d\mu _{g_M} \nonumber \\
&= 2\int _M \sum_{i,j} \varepsilon _i\varepsilon _j \left\langle \bm{\overline{ \nabla }}_{\frac{\partial }{\partial t}}\big( ( \bm{\widetilde{\nabla }} d\Phi ) (e_i,e_j) \big), ( \bm{\widetilde{\nabla }} d\Phi )(e_i, e_j) \right\rangle d\mu _{g_M} .\label{Q_1_t}
\end{align}
Then we have
\begin{align}
\bm{\overline{\nabla }}_{\frac{\partial }{\partial t}} \big( (\bm{\widetilde{\nabla }} d\Phi)(e_i,e_j) \big)
&= \left( \bm{\widetilde{\nabla}} _{\frac{\partial }{\partial t}} \bm{\widetilde{\nabla }}_{e_i}d\Phi \right) (e_j ) \nonumber \\
&= \left( \bm{\widetilde{\nabla }}_{e_i} \bm{\widetilde{\nabla }} _{\frac{\partial }{\partial t}}d\Phi \right) (e_j) - \left( \bm{\widetilde{\nabla }} _{\left[ e_i,\frac{\partial }{\partial t} \right]} d\Phi \right) (e_j) - \left( \widetilde{R}\left( e_i, \frac{\partial }{\partial t} \right) d\Phi \right) (e_j) \nonumber \\
&= \big( \bm{\widetilde{\nabla }} ^2d\Phi \big) \left( e_i,e_j, \frac{\partial }{\partial t} \right) - R^N\left( d\Phi (e_i), d\Phi \left( \frac{\partial }{\partial t} \right) \right) d\Phi (e_j). \label{1st}
\end{align}
By substituting (\ref{1st}) into (\ref{Q_1_t}), we have
\begin{align}
\frac{d}{dt} I^{\mathcal{Q}_1}(\varphi _t)&= 2\int _M \sum _{i,j} \varepsilon _i\varepsilon _j \left\langle \big( \bm{\widetilde{\nabla }} ^2 d\Phi \big) \left( e_i,e_j,\frac{\partial }{\partial t} \right) , \big( \bm{\widetilde{\nabla }} d\Phi \big) (e_i,e_j) \right\rangle d\mu _{g_M} \nonumber \\
&\quad -2\int _M \sum _{i,j} \varepsilon _i\varepsilon _j \left\langle R^N\left(d\Phi (e_i),d\Phi \left( \frac{\partial }{\partial t} \right)\right)d\Phi (e_j) , \big( \bm{\widetilde{\nabla }} d\Phi \big) (e_i,e_j) \right\rangle d\mu _{g_M}\label{Q_1_t_2}.
\end{align}

We need  the following lemma to calculate the first variation of $I^{\mathcal{Q}_1}(\varphi )$.
\begin{lem}
Under the setting above, 
for any variation $\{ \varphi _t \} _{t\in I}$ of $\varphi $, 
it holds
\begin{align}
&\quad \int _M \sum _{i,j} \varepsilon _i \varepsilon _j \left\langle (\bm{\widetilde{\nabla }}^2d\Phi) \left( e_i,e_j,\frac{\partial }{\partial t} \right) , (\bm{\widetilde{\nabla }} d\Phi) (e_i,e_j) \right\rangle d\mu _{g_M} \nonumber \\
&= \int _M \sum _{i,j} \varepsilon _i\varepsilon _j \left\langle d\Phi \left( \frac{\partial }{\partial t} \right) , (\bm{\widetilde{\nabla }} ^3d\Phi) \left( e_i,e_j,e_i,e_j \right) \right\rangle d\mu _{g_M} \label{lemma1}.
\end{align}
\begin{proof}
We define vector fields on $M$ depending on $t\in I$ by
\[
\widetilde{X}_t := \sum _{i,j} \varepsilon _i\varepsilon _j \left\langle (\bm{\widetilde{\nabla }} d\Phi )\left( e_j, \frac{\partial }{\partial t} \right) , (\bm{\widetilde{\nabla }} d\Phi )\left( e_i , e_j \right)  \right\rangle e_i
\]
and
\[
\widetilde{Y}_t := \sum _{i,j} \varepsilon _i\varepsilon _j \left\langle d\Phi \left( \frac{\partial }{\partial t} \right) , \big( \bm{\widetilde{\nabla }} ^2d\Phi \big) \left( e_j , e_i , e_j \right) \right\rangle e_i,
\]
where $\{ e_i \} _{i=1}^m$ is a local pseudo-orthonormal frame field on a neighborhood $U$ of $M$. 
$\widetilde{X}_t$ and $\widetilde{Y}_t$ are well-defined 
because of the independence of the choice of $\{ e_i \}$. 
Hence $\widetilde{X}_t$ and $\widetilde{Y}_t$ are global vector fields on $M$. 

The divergence of $\widetilde{X}_t$ is given by
\begin{align*}
&\quad \mathrm{div} \widetilde{X}_t \\
&= \sum _k \varepsilon _k \left\langle \nabla _{e_k}\widetilde{X}_t , e_k \right\rangle \\
&= \sum _{i,j} \left\{ \varepsilon _i\varepsilon _j \left\langle \bm{\overline{\nabla }} _{e_i} \left( ( \bm{\widetilde{\nabla }} d\Phi) \left( e_j,\frac{\partial }{\partial t} \right) \right) , (\bm{\widetilde{\nabla }} d\Phi) (e_i,e_j) \right\rangle \right. \\
&\quad \left. + \varepsilon _i\varepsilon _j \left\langle ( \bm{\widetilde{\nabla }} d\Phi) \left( e_j,\frac{\partial }{\partial t} \right) , \bm{\overline{\nabla }}_{e_i}\big(  (\bm{\widetilde{\nabla }} d\Phi) (e_i,e_j) \big) \right\rangle \right\} \\
&\quad -\sum_{j,k} \varepsilon _j\varepsilon _k \left\langle (\bm{\widetilde{\nabla }} d\Phi) \left( e_j,\frac{\partial }{\partial t} \right) , (\bm{\widetilde{\nabla }} d\Phi) (\nabla _{e_k}e_k,e_j) \right\rangle \\
&= \sum _{i,j} \varepsilon _i\varepsilon _j \left\{ \left\langle (\bm{\widetilde{\nabla }}^2d\Phi ) \left( e_i,e_j,\frac{\partial }{\partial t} \right) , (\bm{\widetilde{\nabla }} d\Phi)(e_i,e_j) \right\rangle  \right. \\
&\quad \left. + \left\langle ( \bm{\widetilde{\nabla }} d\Phi)\left( \nabla _{e_i}e_j, \frac{\partial }{\partial t} \right) , (\bm{\widetilde{\nabla }} d\Phi)(e_i,e_j) \right\rangle + \left\langle ( \bm{\widetilde{\nabla }} d\Phi) \left( e_j,\frac{\partial }{\partial t} \right) , ( \bm{\widetilde{\nabla }} ^2d\Phi) (e_i,e_i,e_j) \right\rangle \right. \\
&\quad \left. + \left\langle (\bm{\widetilde{\nabla }} d\Phi) \left( e_j,\frac{\partial }{\partial t} \right) , (\bm{\widetilde{\nabla }} d\Phi) \left( e_i, \nabla _{e_i}e_j \right) \right\rangle \right\} .
\end{align*}
At the second equality, we use the following
\begin{align*}
&\quad \sum _{i,j,k} \varepsilon _k\varepsilon _i\varepsilon _j \left\langle (\bm{\widetilde{\nabla }}d\Phi)\left( e_j,\frac{\partial }{\partial t} \right), (\bm{\widetilde{\nabla }}d\Phi) (e_i,e_j) \right\rangle \left\langle \nabla _{e_k}e_i,e_k  \right\rangle \\
&= - \sum _{i,j,k} \varepsilon _k\varepsilon _i\varepsilon _j \left\langle (\bm{\widetilde{\nabla }}d\Phi)\left( e_j,\frac{\partial }{\partial t} \right), (\bm{\widetilde{\nabla }}d\Phi) (e_i,e_j) \right\rangle \left\langle e_i, \nabla _{e_k}e_k  \right\rangle \\
&= -\sum _{j,k} \varepsilon _j\varepsilon _k \left\langle (\bm{\widetilde{\nabla }}d\Phi) \left( e_j,\frac{\partial }{\partial t} \right), (\bm{\widetilde{\nabla }}d\Phi) \left( \nabla _{e_k}e_k,e_j \right) \right\rangle .
\end{align*}

Now, 
take a neighborhood $U$ of $x\in M$ such that the exponential map at $x$ is injective onto $U$, 
which is called a normal neighborhood. 
And we construct a pseudo-orthonormal frame field $\{ e_i \} _{i=1}^m$ 
by parallel transporting a pseudo-orthonormal basis at $x$ 
along a geodesic $\gamma : [0,1]\rightarrow M$ 
from $\gamma (0)=x$ to $\gamma (1)=y$ for every $y\in U$. 
The pseudo-orthonormal frame field $\{ e_i \} _{i=1}^m$ is called a geodesic frame field. 
We note that a geodesic frame field $\{ e_i \}_{i=1}^m$ around a point $x\in M$ satisfies 
\[
\left( \nabla _{e_i}e_j \right) _x =0, \quad \left[ e_i,e_j \right] _x=0 \quad \quad  (1\leq i,j\leq m)
\]
at $x$.
Since $\left( \bm{\nabla } _{e_i}e_j \right) _{(x,t)} = \left( \nabla _{e_i}e_j \right) _x = 0$
for all $t \in I$, we have

\begin{align}
&\quad \big( \mathrm{div}\widetilde{X}_t \big) _x  \nonumber \\
&= \sum _{i,j} \varepsilon _i \varepsilon _j \left\{ \left\langle (\bm{\widetilde{\nabla }}^2d\Phi)_{(x,t)} \left( (e_i)_{(x,t)}, (e_j)_{(x,t)}, \left( \frac{\partial }{\partial t} \right) _{(x,t)} \right), (\bm{\widetilde{\nabla }}d\Phi )_{(x,t)} \left( (e_i)_{(x,t)}, (e_j)_{(x,t)} \right) \right\rangle \right.  \nonumber \\
&\quad \left. + \left\langle (\bm{\widetilde{\nabla }}d\Phi)_{(x,t)} \left( (e_j)_{(x,t)}, \left( \frac{\partial }{\partial t} \right)_{(x,t)}  \right) , (\bm{\widetilde{\nabla }}^2d\Phi)_{(x,t)} \left( (e_i)_{(x,t)} , (e_i)_{(x,t)}, (e_j)_{(x,t)} \right) \right\rangle \right\} . \label{X_x}
\end{align}
Each term of the last formula of (\ref{X_x}) is a tensor, so we have
\begin{align}
\mathrm{div}\widetilde{X}_t &= \sum _{i,j} \varepsilon _i\varepsilon _j \left\{ \left\langle (\bm{\widetilde{\nabla }}^2 d\Phi) \left( e_i,e_j,\frac{\partial }{\partial t} \right),  (\bm{\widetilde{\nabla }} d\Phi)(e_i,e_j) \right\rangle \right. \nonumber \\
&\quad \left. + \left\langle (\bm{\widetilde{\nabla }} d\Phi) \left( e_j,\frac{\partial }{\partial t} \right), (\bm{\widetilde{\nabla }}^2d\Phi) (e_i,e_i,e_j) \right\rangle \right\} , \label{X}
\end{align}
where $\{e_i\}_{i=1}^m$ is an arbitrary local pseudo-orthonormal frame field. 

In a similar way, we calculate the divergence of $\widetilde{Y}_t$.
We have 
\begin{align*}
&\quad \mathrm{div}\widetilde{Y}_t \\
&= \sum _k \varepsilon _k \left\langle \nabla _{e_k}\widetilde{Y}_t , e_k \right\rangle \\
&= \sum _{i,j} \left\{ \varepsilon _i\varepsilon _j \left\langle \bm{\overline{\nabla }} _{e_i}\left( d\Phi \left( \frac{\partial }{\partial t} \right) \right) , (\bm{\widetilde{\nabla }}^2d\Phi) (e_j,e_i,e_j) \right\rangle \right. \\
&\quad \left. + \varepsilon _i\varepsilon _j \left\langle d\Phi \left( \frac{\partial }{\partial t} \right) , \bm{\overline{\nabla }} _{e_i}\big( (\bm{\widetilde{\nabla }}^2d\Phi)(e_j,e_i,e_j) \big) \right\rangle \right\} \\
&\quad -\sum _{j,k} \varepsilon _k \varepsilon _j \left\langle d\Phi \left( \frac{\partial }{\partial t} \right) , (\bm{\widetilde{\nabla }}^2d\Phi) \left( e_j,\nabla _{e_k}e_k,e_j \right) \right\rangle \\
&= \sum_{i,j}\varepsilon _i\varepsilon _j \left\{  \left\langle (\bm{\widetilde{\nabla }} d\Phi) \left( e_i, \frac{\partial }{\partial t} \right) , (\bm{\widetilde{\nabla }}^2d\Phi) (e_j,e_i,e_j) \right\rangle + \left\langle d\Phi \left( \frac{\partial }{\partial t} \right), (\bm{\widetilde{\nabla }}^3d\Phi) (e_i,e_j,e_i,e_j) \right\rangle \right. \\
&\quad \left. + \left\langle d\Phi \left( \frac{\partial }{\partial t} \right) , (\bm{\widetilde{\nabla }}^2d\Phi) \left( \nabla _{e_i}e_j ,e_i,e_j \right) \right\rangle +  \left\langle d\Phi \left( \frac{\partial }{\partial t} \right) , (\bm{\widetilde{\nabla }}^2d\Phi)\left( e_j,e_i,\nabla _{e_i}e_j \right) \right\rangle \right\} .
\end{align*}
Then, assuming that $\{ e_i \}$ is a geodesic frame field around a point $x\in M$, we have
\begin{align}
&\quad \big( \mathrm{div}\widetilde{Y}_t \big) _x \nonumber \\
&= \sum_{i,j} \varepsilon _i\varepsilon _j \left\{ \left\langle  (\bm{\widetilde{\nabla }} d\Phi)_{(x,t)} \left( (e_i)_{(x,t)} , \left( \frac{\partial }{\partial t} \right) _{(x,t)} \right) , (\bm{\widetilde{\nabla }}^2d\Phi)_{(x,t)} \left( (e_j)_{(x,t)}, (e_i)_{(x,t)}, (e_j)_{(x,t)} \right)  \right\rangle \right. \nonumber \\
&\quad \left. + \left\langle (d\Phi )_{(x,t)} \left( \left( \frac{\partial }{\partial t} \right) _{(x,t)} \right) , (\bm{\widetilde{\nabla }} ^3d\Phi) _{(x,t)} \left( (e_i)_{(x,t)} , (e_j)_{(x,t)}, (e_i)_{(x,t)}, (e_j)_{(x,t)}  \right) \right\rangle \right\} \label{Y_x}.
\end{align}
Each term of the right hand side of (\ref{Y_x}) is a tensor, so we have
\begin{align}
\mathrm{div}\widetilde{Y}_t &= \sum _{i,j}\varepsilon _i\varepsilon _j \left\{  \left\langle (\bm{\widetilde{\nabla }} d\Phi) \left( e_i, \frac{\partial }{\partial t} \right) , (\bm{\widetilde{\nabla }}^2d\Phi) (e_j,e_i,e_j) \right\rangle \right. \nonumber \\
&\quad \left. + \left\langle d\Phi \left( \frac{\partial }{\partial t} \right), (\bm{\widetilde{\nabla }} ^3d\Phi) (e_i,e_j,e_i,e_j) \right\rangle \right\} , \label{Y}
\end{align}
where $\{e_i\}_{i=1}^m$ is an arbitrary local pseudo-orthonormal frame field. 

By Green's theorem, we have
\[
\int _M \mathrm{div}\widetilde{X}_t\; d\mu _{g_M} = 0 = \int _M \mathrm{div}\widetilde{Y}_t\; d\mu _{g_M}, 
\]
and together with (\ref{X}) and (\ref{Y}), we have 
\begin{align*}
&\quad \int _M \sum _{i,j} \varepsilon _i\varepsilon _j \left\langle (\bm{\widetilde{\nabla }}^2d\Phi) \left( e_i,e_j,\frac{\partial }{\partial t} \right) , (\bm{\widetilde{\nabla }} d\Phi) (e_i,e_j) \right\rangle d\mu _{g_M} \\
&= \int _M \sum _{i,j} \varepsilon _i\varepsilon _j \left\langle d\Phi \left( \frac{\partial }{\partial t} \right) , (\bm{\widetilde{\nabla }}^3d\Phi) \left( e_i,e_j,e_i,e_j \right) \right\rangle d\mu _{g_M}.
\end{align*}
Here we use the symmetry of $\widetilde{\nabla }^2d\varphi$.
\end{proof}
\end{lem}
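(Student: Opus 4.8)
The plan is to transfer the two covariant derivatives carried by $\bm{\widetilde{\nabla}}^2 d\Phi$ onto the factor $d\Phi(\frac{\partial}{\partial t})$ by integrating by parts twice over the closed manifold $M$, each integration by parts being realised as the vanishing (by Green's theorem) of the integral of the divergence of a suitably chosen $t$-dependent vector field on $M$.

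First I would introduce
\[
\widetilde{X}_t := \sum_{i,j} \varepsilon_i\varepsilon_j \left\langle (\bm{\widetilde{\nabla}} d\Phi)\!\left(e_j, \tfrac{\partial}{\partial t}\right), (\bm{\widetilde{\nabla}} d\Phi)(e_i,e_j) \right\rangle e_i ,
\]
which is independent of the choice of local pseudo-orthonormal frame $\{e_i\}$, hence a global vector field on $M$. Expanding $\mathrm{div}\,\widetilde{X}_t = \sum_k \varepsilon_k\langle \nabla_{e_k}\widetilde{X}_t, e_k\rangle$ with the Leibniz rule and metric compatibility, and then evaluating at a point $x$ in a geodesic frame to kill all terms containing $\nabla_{e_i}e_j$ (the resulting pointwise identity being tensorial, hence valid for every frame), one gets that $\mathrm{div}\,\widetilde{X}_t$ equals $\sum_{i,j}\varepsilon_i\varepsilon_j$ times $\langle (\bm{\widetilde{\nabla}}^2 d\Phi)(e_i,e_j,\frac{\partial}{\partial t}), (\bm{\widetilde{\nabla}} d\Phi)(e_i,e_j)\rangle$ — precisely the integrand on the left of (\ref{lemma1}) — plus a cross term $\langle (\bm{\widetilde{\nabla}} d\Phi)(e_j,\frac{\partial}{\partial t}), (\bm{\widetilde{\nabla}}^2 d\Phi)(e_i,e_i,e_j)\rangle$. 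No curvature contribution appears, since only ``outward'' covariant derivatives of $\bm{\widetilde{\nabla}} d\Phi$ enter and no derivatives are commuted.

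Next I would introduce
\[
\widetilde{Y}_t := \sum_{i,j} \varepsilon_i\varepsilon_j \left\langle d\Phi\!\left(\tfrac{\partial}{\partial t}\right), (\bm{\widetilde{\nabla}}^2 d\Phi)(e_j,e_i,e_j) \right\rangle e_i ,
\]
also a global vector field, and compute $\mathrm{div}\,\widetilde{Y}_t$ in the same way, using $\bm{\overline{\nabla}}_{e_i}\big(d\Phi(\frac{\partial}{\partial t})\big) = (\bm{\widetilde{\nabla}} d\Phi)(e_i,\frac{\partial}{\partial t})$, which holds because $\bm{\nabla}_{e_i}\frac{\partial}{\partial t}=0$. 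This yields $\sum_{i,j}\varepsilon_i\varepsilon_j$ times $\langle d\Phi(\frac{\partial}{\partial t}), (\bm{\widetilde{\nabla}}^3 d\Phi)(e_i,e_j,e_i,e_j)\rangle$ — the integrand on the right of (\ref{lemma1}) — plus the cross term $\langle (\bm{\widetilde{\nabla}} d\Phi)(e_i,\frac{\partial}{\partial t}), (\bm{\widetilde{\nabla}}^2 d\Phi)(e_j,e_i,e_j)\rangle$.

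Finally, Green's theorem gives $\int_M \mathrm{div}\,\widetilde{X}_t\,d\mu_{g_M} = 0 = \int_M \mathrm{div}\,\widetilde{Y}_t\,d\mu_{g_M}$; equating the two identities for the divergences and cancelling the common cross term yields exactly (\ref{lemma1}), provided the two cross terms agree. To see that they do, I would relabel $i \leftrightarrow j$ in the cross term coming from $\widetilde{X}_t$ and then invoke the symmetry $(\widetilde{\nabla}^2 d\varphi)(X,Y,Z)=(\widetilde{\nabla}^2 d\varphi)(X,Z,Y)$ in the last two arguments to match it with the one coming from $\widetilde{Y}_t$. I expect the only real point of care to be the bookkeeping in the two divergence computations — tracking which frame vector sits in which slot of $\bm{\widetilde{\nabla}}^2 d\Phi$ and $\bm{\widetilde{\nabla}}^3 d\Phi$, confirming that no curvature terms intervene, and verifying the cross-term cancellation through that symmetry; the geodesic-frame device is what makes the connection terms vanish cleanly, and tensoriality restores an arbitrary frame afterwards.
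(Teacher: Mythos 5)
Your proposal is correct and follows essentially the same route as the paper: the same vector fields $\widetilde{X}_t$ and $\widetilde{Y}_t$, the same geodesic-frame divergence computations, Green's theorem, and the cancellation of the cross terms via the relabelling $i\leftrightarrow j$ combined with the symmetry of $\widetilde{\nabla}^2 d\varphi$ in its last two arguments. The only difference is that you spell out the cross-term matching more explicitly than the paper, which merely remarks that the symmetry is used.
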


Substituting (\ref{lemma1}) into (\ref{Q_1_t_2}), we have
{\small 
\begin{align*}
&\quad \frac{d}{dt} I^{\mathcal{Q}_1}(\varphi _t) \\
&= 2\int _M \sum _{i,j} \varepsilon _i\varepsilon_j \left\langle  (\bm{\widetilde{\nabla }} ^3d\Phi) (e_i,e_j,e_i,e_j) - R^N \left( d\Phi (e_j), (\bm{\widetilde{\nabla }} d\Phi)(e_i,e_j) \right) d\Phi (e_i) , d\Phi \left( \frac{\partial }{\partial t} \right) \right\rangle d\mu _{g_M}.
\end{align*}
}
Therefore we obtain the following theorem. 
\begin{thm}\label{thmQ_1}
Let $(M^m_p,g_M)$ be a compact pseudo-Riemannian manifold, 
$(N^n_q,g_N)$ a pseudo-Riemannian manifold 
and $\varphi : M \rightarrow N$ a $C^{\infty}$-map. 
Consider a $C^{\infty}$-variation $\{ \varphi_t\} _{t\in I}$ of $\varphi$ with variational vector field $V$. 
Then the following formula holds
\begin{align*}
&\quad \left. \frac{d}{dt} I^{\mathcal{Q}_1}(\varphi _t)\right|_{t=0} \\
&= 2\int _M \left\langle \sum _{i,j} \varepsilon _i\varepsilon _j \left\{ \big( \widetilde{\nabla } ^3 d\varphi \big) \left( e_i,e_j,e_i,e_j \right) + R^N \big( ( \widetilde{\nabla } d\varphi ) (e_i,e_j) , d\varphi (e_i) \big) d\varphi (e_j) \right\} , V \right\rangle d\mu _{g_M}, 
\end{align*}
where $\{e_i\} _{i=1}^m$ is a local pseudo-orthonormal frame field of $(M^m_p,g_M)$ with 
$g_M(e_i,e_j) = \varepsilon _i\delta _{ij}$, $\varepsilon _1=\cdots =\varepsilon _p=-1$, 
$\varepsilon _{p+1} = \cdots = \varepsilon _m=1$. 
\end{thm}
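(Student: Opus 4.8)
The plan is to carry out a direct first-variation computation in the spirit of the derivation of the bienergy variation, and then to reconcile the resulting curvature term with the form stated in the theorem by a short algebraic symmetrization. First I would differentiate $I^{\mathcal{Q}_1}(\varphi_t)$ under the integral sign and express everything in terms of the extended map $\Phi : M\times I\to N$ and a local pseudo-orthonormal frame $\{e_i\}$, arriving at $(\ref{Q_1_t})$. The one genuinely analytic input is the commutation of the covariant $t$-derivative with $\bm{\widetilde{\nabla}}_{e_i}$: since $[e_i,\partial/\partial t]=0$ and $\bm{\nabla}_{e_i}(\partial/\partial t)=0$ on $M\times I$, the defect is purely the curvature term $\widetilde{R}(e_i,\partial/\partial t)$, which by the formula for $\widetilde{R}$ recorded above equals $R^N(d\Phi(e_i),d\Phi(\partial/\partial t))d\Phi(e_j)$; this is $(\ref{1st})$. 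Substituting it into $(\ref{Q_1_t})$ splits the derivative into the two integrals of $(\ref{Q_1_t_2})$: one pairing $\bm{\widetilde{\nabla}}^2 d\Phi$ against $\bm{\widetilde{\nabla}}d\Phi$, and one curvature integral.

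Next I would integrate by parts twice so that all covariant derivatives are moved off the slot carrying $\partial/\partial t$ and onto $\bm{\widetilde{\nabla}}d\Phi$; this is the content of the Lemma, yielding $(\ref{lemma1})$. The device is to introduce the auxiliary vector fields $\widetilde{X}_t$ and $\widetilde{Y}_t$ on $M$ (built from $\bm{\widetilde{\nabla}}d\Phi$, $\bm{\widetilde{\nabla}}^2 d\Phi$ and $d\Phi(\partial/\partial t)$), to check that they are globally well defined, to compute $\mathrm{div}\,\widetilde{X}_t$ and $\mathrm{div}\,\widetilde{Y}_t$, and to apply Green's theorem. The point requiring care is that the raw divergence formulae contain several terms involving the connection coefficients $\nabla_{e_k}e_i$ and $\nabla_{e_k}e_k$; evaluating at a fixed point $x$ with a geodesic frame makes these vanish, and since the surviving expressions are tensorial the resulting identity is frame independent. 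The symmetry $(\widetilde{\nabla}^2 d\varphi)(X,Y,Z)=(\widetilde{\nabla}^2 d\varphi)(X,Z,Y)$ is used at the end to convert the second-derivative contractions into $\bm{\widetilde{\nabla}}^3 d\Phi$ evaluated at $(e_i,e_j,e_i,e_j)$. Feeding $(\ref{lemma1})$ back into $(\ref{Q_1_t_2})$ produces the formula for $\frac{d}{dt}I^{\mathcal{Q}_1}(\varphi_t)$ displayed just before the theorem.

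It then remains to put $t=0$ and tidy the curvature term. At $t=0$ the map $\Phi$ restricts to $\varphi$, the bold connections restrict to $\widetilde{\nabla}$ and $\overline{\nabla}$, and $d\Phi(\partial/\partial t)|_{t=0}=V$, so the first term is already $(\widetilde{\nabla}^3 d\varphi)(e_i,e_j,e_i,e_j)$. For the curvature term I would use the defining antisymmetry $R^N(X,Y)=-R^N(Y,X)$ of the curvature operator to rewrite $-R^N(d\varphi(e_j),(\widetilde{\nabla}d\varphi)(e_i,e_j))d\varphi(e_i)$ as $R^N((\widetilde{\nabla}d\varphi)(e_i,e_j),d\varphi(e_j))d\varphi(e_i)$, then relabel $i\leftrightarrow j$ in the double sum (which leaves $\varepsilon_i\varepsilon_j$ unchanged) and invoke the symmetry $(\widetilde{\nabla}d\varphi)(e_i,e_j)=(\widetilde{\nabla}d\varphi)(e_j,e_i)$ of the second fundamental form to bring it to the stated form $R^N((\widetilde{\nabla}d\varphi)(e_i,e_j),d\varphi(e_i))d\varphi(e_j)$. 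This gives the asserted formula.

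I expect the main obstacle to be the double integration by parts of the Lemma in the pseudo-Riemannian setting: one must keep track of the index signs $\varepsilon_i$, $\varepsilon_j$ throughout and verify that the various connection-coefficient terms generated by the two divergences cancel, for which the geodesic-frame-at-a-point argument followed by tensoriality is the right tool. By contrast the curvature commutation and the final symmetrization are purely algebraic and present no difficulty, the only thing worth recording being that the antisymmetry of $R^N$ used at the end holds for the curvature of any connection, hence in particular in the pseudo-Riemannian case.
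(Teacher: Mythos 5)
Your proposal is correct and follows essentially the same route as the paper: differentiate under the integral, commute the $t$-derivative past $\bm{\widetilde{\nabla}}_{e_i}$ at the cost of the curvature defect, integrate by parts twice via the divergences of the auxiliary fields $\widetilde{X}_t,\widetilde{Y}_t$ computed in a geodesic frame and Green's theorem, then set $t=0$ and symmetrize the curvature term. The only point you leave implicit is that isolating $d\Phi(\partial/\partial t)$ on one side of the inner product in the curvature integral uses the pair symmetry $\langle R^N(X,Y)Z,W\rangle=\langle R^N(Z,W)X,Y\rangle$ of the Levi--Civita curvature (valid in the pseudo-Riemannian setting), not merely the antisymmetry $R^N(X,Y)=-R^N(Y,X)$; with that supplied, your argument matches the paper's.
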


For a map $\varphi \in C^{\infty }(M,N)$, 
we define $W_1(\varphi ) \in \Gamma (\varphi ^{-1}TN)$ by
\[
W_1(\varphi ) := \sum _{i,j} \varepsilon _i\varepsilon _j \left\{ \big( \widetilde{\nabla } ^3d\varphi \big) (e_i,e_j,e_i,e_j) + R^N\big( (\widetilde{\nabla } d\varphi )(e_i,e_j), d\varphi (e_i) \big) d\varphi (e_j) \right\} .
\]
Hence $\varphi $ is a $\mathcal{Q}_1$-map if and only if
$W_1(\varphi ) = 0$. 
We can adopt the Euler--Lagrange equation $W_1(\varphi ) = 0$
as the definition of a $\mathcal{Q}_1$-map.
Then the domain $M$ of $\varphi$ is not nesessarily compact.

%%%%%%%%%%%%%%%%%%%%%%%%%%%%%%%%%%%%%%%%%%%%%%%%%%%%%%%%%%%%%%%%%%%%%%%%%
\subsection{The first variational formula of $\mathcal{Q}_2$-energy}
\label{subsec:Q_2}

In a similar way, 
we show the first variational formula of the $\mathcal{Q}_2$-energy. 
Let $\{ \varphi_t \} _{t\in I}$ be a $C^{\infty}$-variation of $\varphi$ with variational vector field $V$
and $\{ e_i \}$ a local pseudo-orthonormal frame field on a neighborhood $U$. 

First, we can write (\ref{Q_2}) as
\begin{align*}
I^{\mathcal{Q}_2}(\varphi )
&= \int_M \left\langle \mathrm{tr}_{g_M} (\widetilde{\nabla }d\varphi ) , \mathrm{tr}_{g_M} (\widetilde{\nabla }d\varphi ) \right\rangle d\mu _{g_M} \\
&= \int _M\sum _{i,j} \varepsilon _i\varepsilon _j \left\langle (\widetilde{\nabla } d\varphi )(e_i,e_i),(\widetilde{\nabla } d\varphi)(e_j,e_j) \right\rangle d\mu _{g_M}.
\end{align*}
For a variation $\{ \varphi _t \} _{t\in I}$ of $\varphi $, it holds that
\begin{align}
\frac{d}{dt} I^{\mathcal{Q}_2}(\varphi _t) &= \frac{d}{dt} \int _M \sum _{i,j} \varepsilon _i\varepsilon _j \left\langle (\bm{\widetilde{\nabla }} d\Phi )(e_i,e_i) , (\bm{\widetilde{\nabla }} d\Phi)(e_j,e_j) \right\rangle d\mu _{g_M} \nonumber \\
&= 2\int _M \sum _{i,j} \varepsilon _i\varepsilon _j \left\langle \bm{\overline{\nabla }} _{\frac{\partial }{\partial t}} \big( (\bm{\widetilde{\nabla }} d\Phi) (e_i,e_i) \big) , (\bm{\widetilde{\nabla }} d\Phi)(e_j,e_j) \right\rangle d\mu_{g_M} \label{Q_2_t}.
\end{align}
Then we have
\begin{align}
\bm{\overline{\nabla }} _{\frac{\partial }{\partial t}} \big( (\bm{\widetilde{\nabla }} d\Phi) (e_i,e_i) \big)
&= \left( \bm{\widetilde{\nabla }} _{\frac{\partial }{\partial t}}\bm{\widetilde{\nabla }} _{e_i}d\Phi \right)(e_i) \nonumber \\
&= \left( \bm{\widetilde{\nabla}}_{e_i} \bm{\widetilde{\nabla}}_{\frac{\partial}{\partial t}}d\Phi \right) (e_i) - \left( \bm{\widetilde{\nabla }} _{\left[ e_i,\frac{\partial }{\partial t} \right]} d\Phi \right) (e_i) - \left( \widetilde{R}\left( e_i,\frac{\partial }{\partial t} \right) d\Phi \right) (e_i) \nonumber \\
&= (\bm{\widetilde{\nabla }} ^2d\Phi) \left( e_i,e_i,\frac{\partial }{\partial t} \right) - R^N\left( d\Phi (e_i), d\Phi \left( \frac{\partial }{\partial t} \right) \right) d\Phi (e_i) \label{1st_2} .
\end{align}
By substituting (\ref{1st_2}) into (\ref{Q_2_t}), we have
\begin{align}
\frac{d}{dt} I^{\mathcal{Q}_2}(\varphi _t) &= 2\int _M \sum _{i,j} \varepsilon _i\varepsilon _j \left\langle (\bm{\widetilde{\nabla }}^2d\Phi) \left( e_i,e_i, \frac{\partial }{\partial t} \right) , (\bm{\widetilde{\nabla }} d\Phi) (e_j,e_j) \right\rangle d\mu _{g_M} \nonumber \\
&\quad -2\int _M \sum _{i,j} \varepsilon _i\varepsilon _j \left\langle R^N\left( d\Phi (e_i), d\Phi \left( \frac{\partial }{\partial t} \right) \right) d\Phi (e_i) , (\bm{\widetilde{\nabla }} d\Phi) (e_j,e_j) \right\rangle d\mu _{g_M} \label{Q_2_t_2}. 
\end{align}

\begin{lem}
Under the setting above, for any variation $\{ \varphi _t \} _{t\in I}$ of $\varphi $, 
it holds 
\begin{align}
&\quad \int _M \sum _{i,j} \varepsilon _i\varepsilon _j \left\langle (\bm{\widetilde{\nabla }}^2d\Phi) \left( e_i,e_i, \frac{\partial }{\partial t} \right) , (\bm{\widetilde{\nabla }} d\Phi) (e_j,e_j) \right\rangle d\mu _{g_M} \nonumber \\
&= \int _M \sum _{i,j} \varepsilon _i\varepsilon _j \left\langle d\Phi \left( \frac{\partial }{\partial t} \right) , (\bm{\widetilde{\nabla }} ^3d\Phi) (e_i,e_i,e_j,e_j) \right\rangle d\mu _{g_M} \label{lemma2}.
\end{align}
\begin{proof}
For each $t\in I$, we define vector fields on $M$ by
\[
\widehat{X}_t := \sum _{i,j} \varepsilon _i\varepsilon _j \left\langle (\bm{\widetilde{\nabla }} d\Phi )\left( e_i, \frac{\partial }{\partial t} \right) , (\bm{\widetilde{\nabla }} d\Phi )\left( e_j , e_j \right)  \right\rangle e_i
\]
and
\[
\widehat{Y}_t := \sum _{i,j} \varepsilon _i\varepsilon _j \left\langle d\Phi \left( \frac{\partial }{\partial t} \right) , \big( \bm{\widetilde{\nabla }} ^2d\Phi \big) \left( e_i , e_j , e_j \right) \right\rangle e_i,
\]
where $\{ e_i \}_{i=1}^m$ is a pseudo-orthonormal frame field on a neighborhood $U$ of $M$. 
Note that
$\widehat{X}_t$ and $\widehat{Y}_t$ are globally defined vector fields on $M$. 

The divergence of $\widehat{X}_t$ is given by
\begin{align*}
\mathrm{div}\widehat{X}_t
&= \sum_k \varepsilon _k \left\langle \nabla _{e_k} \widehat{X}_t , e_k \right\rangle \\
&= \sum _{i,j}\varepsilon _i\varepsilon _j \left\{ \left\langle (\bm{\widetilde{\nabla }} ^2d\Phi) \left( e_i,e_i,\frac{\partial }{\partial t} \right) , (\bm{\widetilde{\nabla }} d\Phi)(e_j,e_j) \right\rangle \right. \\
&\quad + \left\langle (\bm{\widetilde{\nabla }} d\Phi)\left( e_i,\frac{\partial }{\partial t} \right) , (\bm{\widetilde{\nabla }}^2d\Phi) (e_i,e_j,e_j) \right\rangle \\
&\quad \left. + 2 \left\langle (\bm{\widetilde{\nabla }} d\Phi)\left( e_i,\frac{\partial }{\partial t} \right) , (\bm{\widetilde{\nabla }} d\Phi)\left( \nabla _{e_i}e_j,e_j \right) \right\rangle \right\} .
\end{align*}
Then, assuming that $\{ e_i \}$ is a geodesic frame field around a point $x\in M$, we have
\begin{align}
&\quad \big( \mathrm{div} \widehat{X}_t \big) _x \nonumber \\
&= \sum_{i,j}\varepsilon _i\varepsilon _j \left\{  \left\langle (\bm{\widetilde{\nabla }}^2d\Phi) \left( (e_i)_{(x,t)}, (e_i)_{(x,t)}, \left( \frac{\partial }{\partial t} \right) _{(x,t)} \right) , (\bm{\widetilde{\nabla }}d\Phi) \left( (e_j)_{(x,t)}, (e_j)_{(x,t)} \right) \right\rangle \right. \nonumber \\
& \quad \left. + \left\langle (\bm{\widetilde{\nabla }}d\Phi ) \left( (e_i)_{(x,t)}, \left( \frac{\partial }{\partial t} \right)_{(x,t)} \right) , (\bm{\widetilde{\nabla }}^2d\Phi) _{(x,t)} \left( (e_i)_{(x,t)}, (e_j)_{(x,t)}, (e_j)_{(x,t)} \right) \right\rangle \right\} . \label{X_t_1}
\end{align}
Each term of the right hand side of  (\ref{X_t_1}) is a tensor, so we have
\begin{align}
\mathrm{div}\widehat{X}_t &= \sum _{i,j}\varepsilon _i\varepsilon_j \left\{ \left\langle (\bm{\widetilde{\nabla }} ^2d\Phi) \left( e_i,e_i,\frac{\partial }{\partial t} \right) , (\bm{\widetilde{\nabla }} d\Phi )(e_j,e_j) \right\rangle \right. \nonumber \\
&\quad \left. + \left\langle (\bm{\widetilde{\nabla }} d\Phi)\left( e_i,\frac{\partial }{\partial t} \right) , (\bm{\widetilde{\nabla }}^2d\Phi) (e_i,e_j,e_j) \right\rangle \right\} , \label{X_2}
\end{align}
where $\{e_i\}_{i=1}^m$ is an arbitrary local pseudo-orthonormal frame field. 

In a similar way, we calculate the divergence of $\widehat{Y}_t$. We have
\begin{align*}
\mathrm{div}\widehat{Y}_t
&= \sum _k \varepsilon _k \left\langle \nabla _{e_k}\widehat{Y}_t , e_k \right\rangle \\
&= \sum _{i,j}\varepsilon _i\varepsilon _j \left\{  \left\langle (\bm{\widetilde{\nabla }} d\Phi) \left( e_i, \frac{\partial }{\partial t} \right) , (\bm{\widetilde{\nabla }}^2d\Phi)(e_i,e_j,e_j) \right\rangle \right. \\
&\quad + \left\langle d\Phi \left( \frac{\partial }{\partial t} \right) , (\bm{\widetilde{\nabla }} ^3d\Phi) (e_i,e_i,e_j,e_j) \right\rangle \\
&\quad \left. + 2 \left\langle d\Phi \left( \frac{\partial }{\partial t} \right) , (\bm{\widetilde{\nabla }} ^2d\Phi) \left( e_i,\nabla _{e_i}e_j,e_j \right) \right\rangle \right\} .
\end{align*}
Then, assuming that $\{ e_i \}$ is a geodesic frame field around a point $x\in M$, we have
\begin{align}
&\quad \big( \mathrm{div}\widehat{Y}_t \big) _x \nonumber \\
&= \sum _{i,j} \varepsilon _i\varepsilon _j \left\{ \left\langle  (\bm{\widetilde{\nabla }} d\Phi) _{(x,t)} \left( (e_i)_{(x,t)}, \left( \frac{\partial }{\partial t}\right) _{(x,t)} \right) , (\bm{\widetilde{\nabla }}^2d\Phi) _{(x,t)} \left( (e_i)_{(x,t)}, (e_j)_{(x,t)}, (e_j)_{(x,t)} \right) \right\rangle \right. \nonumber \\
&\quad \left. + \left\langle (d\Phi )_{(x,t)} \left( \left( \frac{\partial }{\partial t} \right) _{(x,t)} \right) , (\bm{\widetilde{\nabla }} ^3d\Phi) _{(x,t)} \left( (e_i)_{(x,t)}, (e_i)_{(x,t)}, (e_j)_{(x,t)}, (e_j)_{(x,t)} \right) \right\rangle \right\} . \label{Y_t_2}
\end{align}
Each term of the right hand side of (\ref{Y_t_2}) is a tensor, so we have
\begin{align}
\mathrm{div}\widehat{Y}_t &= \sum _{i,j}\varepsilon _i\varepsilon _j \left\{ \left\langle (\bm{\widetilde{\nabla }} d\Phi) \left( e_i,\frac{\partial }{\partial t} \right) , (\bm{\widetilde{\nabla }}^2d\Phi)(e_i,e_j,e_j) \right\rangle \right. \nonumber \\
&\quad \left. + \left\langle d\Phi \left( \frac{\partial }{\partial t} \right) , (\bm{\widetilde{\nabla }} ^3d\Phi) (e_i,e_i,e_j,e_j) \right\rangle \right\} ,  \label{Y_2}
\end{align}
where $\{e_i\}_{i=1}^m$ is an arbitrary local pseudo-orthonormal frame field. 

By Green's theorem, we have
\[
\int _M \mathrm{div}\widehat{X}_t\; d\mu _{g_M} = 0 = \int _M \mathrm{div}\widehat{Y}_t\; d\mu _{g_M},
\]
and together with (\ref{X_2}) and (\ref{Y_2}), we have
\begin{align*}
&\quad \int _M \sum _{i,j} \varepsilon _i\varepsilon _j \left\langle (\bm{\widetilde{\nabla }}^2d\Phi) \left( e_i,e_i, \frac{\partial }{\partial t} \right) , (\bm{\widetilde{\nabla }} d\Phi) (e_j,e_j) \right\rangle d\mu _{g_M} \\
&= \int _M \sum _{i,j} \varepsilon _i\varepsilon _j \left\langle d\Phi \left( \frac{\partial }{\partial t} \right) , (\bm{\widetilde{\nabla }} ^3d\Phi) (e_i,e_i,e_j,e_j) \right\rangle d\mu _{g_M}.
\end{align*}
\end{proof}
\end{lem}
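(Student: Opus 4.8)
The plan is to derive \eqref{lemma2} by two successive applications of the divergence theorem on the compact manifold $M$, exactly parallel to the proof of \eqref{lemma1}; the only difference is that the two contracted frame indices now sit inside the norm of the tension field rather than the norm of the second fundamental form. For each fixed $t\in I$ I would introduce the two auxiliary vector fields
\[
\widehat{X}_t := \sum_{i,j} \varepsilon_i \varepsilon_j \Big\langle (\bm{\widetilde{\nabla}}d\Phi)\Big(e_i,\frac{\partial}{\partial t}\Big), (\bm{\widetilde{\nabla}}d\Phi)(e_j,e_j)\Big\rangle e_i, \qquad \widehat{Y}_t := \sum_{i,j} \varepsilon_i \varepsilon_j \Big\langle d\Phi\Big(\frac{\partial}{\partial t}\Big), (\bm{\widetilde{\nabla}}^2 d\Phi)(e_i,e_j,e_j)\Big\rangle e_i
\]
associated to a local pseudo-orthonormal frame $\{e_i\}$, and first check that each of them is the $g_M$-dual of a globally defined $1$-form on $M$ (for $\widehat{X}_t$ this $1$-form is $Z\mapsto\langle \mathrm{tr}_{g_M}(\bm{\widetilde{\nabla}}d\Phi),(\bm{\widetilde{\nabla}}_Z d\Phi)(\partial/\partial t)\rangle$), so that $\widehat{X}_t$ and $\widehat{Y}_t$ are well defined independently of the frame.

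Next I would compute $\mathrm{div}\,\widehat{X}_t$ and $\mathrm{div}\,\widehat{Y}_t$ by the Leibniz rule and evaluate them at an arbitrary point $x\in M$ with respect to a geodesic frame at $x$. The decisive simplification, as in the preceding proof, is that the product connection on $M\times I$ obeys $\bm{\nabla}_{\partial/\partial t}\partial/\partial t=\bm{\nabla}_{\partial/\partial t}e_i=\bm{\nabla}_{e_i}\partial/\partial t=0$ and $(\bm{\nabla}_{e_i}e_j)_{(x,t)}=(\nabla_{e_i}e_j)_x=0$, while metric compatibility gives $\langle\nabla_{e_k}e_i,e_k\rangle=-\langle e_i,\nabla_{e_k}e_k\rangle$; hence at $x$ all terms containing $\nabla_{e_i}e_j$ vanish. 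Unwinding the definitions of $\bm{\widetilde{\nabla}}^2 d\Phi$ and $\bm{\widetilde{\nabla}}^3 d\Phi$ then yields, at $x$ and therefore --- each summand being a contraction of tensors --- for every local pseudo-orthonormal frame,
\[
\mathrm{div}\,\widehat{X}_t=\sum_{i,j}\varepsilon_i\varepsilon_j\Big\{\Big\langle(\bm{\widetilde{\nabla}}^2 d\Phi)\Big(e_i,e_i,\frac{\partial}{\partial t}\Big),(\bm{\widetilde{\nabla}}d\Phi)(e_j,e_j)\Big\rangle+\Big\langle(\bm{\widetilde{\nabla}}d\Phi)\Big(e_i,\frac{\partial}{\partial t}\Big),(\bm{\widetilde{\nabla}}^2 d\Phi)(e_i,e_j,e_j)\Big\rangle\Big\},
\]
\[
\mathrm{div}\,\widehat{Y}_t=\sum_{i,j}\varepsilon_i\varepsilon_j\Big\{\Big\langle(\bm{\widetilde{\nabla}}d\Phi)\Big(e_i,\frac{\partial}{\partial t}\Big),(\bm{\widetilde{\nabla}}^2 d\Phi)(e_i,e_j,e_j)\Big\rangle+\Big\langle d\Phi\Big(\frac{\partial}{\partial t}\Big),(\bm{\widetilde{\nabla}}^3 d\Phi)(e_i,e_i,e_j,e_j)\Big\rangle\Big\}.
\]

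Finally I would apply Green's theorem, which gives $\int_M\mathrm{div}\,\widehat{X}_t\,d\mu_{g_M}=0=\int_M\mathrm{div}\,\widehat{Y}_t\,d\mu_{g_M}$; subtracting these two integral relations, the common mixed term $\sum_{i,j}\varepsilon_i\varepsilon_j\langle(\bm{\widetilde{\nabla}}d\Phi)(e_i,\partial/\partial t),(\bm{\widetilde{\nabla}}^2 d\Phi)(e_i,e_j,e_j)\rangle$ cancels and leaves exactly the asserted identity \eqref{lemma2}. I expect the only genuine obstacle to be bookkeeping: carrying the signs $\varepsilon_i,\varepsilon_j$ correctly through the Leibniz rule and through the raising of the free index, and making the step from the geodesic-frame computation at a point to a manifestly frame-independent tensorial identity completely rigorous. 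No new idea beyond the double integration by parts already carried out for \eqref{lemma1} is needed; the symmetry of $\widetilde{\nabla}^2 d\varphi$ in its last two arguments is available should one wish to reorganize any intermediate term.
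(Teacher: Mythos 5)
Your proposal is correct and follows essentially the same route as the paper: the same auxiliary vector fields $\widehat{X}_t$ and $\widehat{Y}_t$, the same divergence computations simplified via a geodesic frame at a point and then promoted to arbitrary frames by tensoriality, and the same application of Green's theorem with cancellation of the common mixed term. The only cosmetic difference is that you justify the frame-independence of $\widehat{X}_t$, $\widehat{Y}_t$ by exhibiting them as metric duals of globally defined $1$-forms, whereas the paper simply asserts their global definition.
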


Substituting (\ref{lemma2}) into (\ref{Q_2_t_2}), we have
{\small 
\begin{align*}
&\quad \frac{d}{dt} I^{\mathcal{Q}_2}(\varphi _t) \\
&= 2\int _M \sum _{i,j} \varepsilon _i\varepsilon _j \left\langle  (\bm{\widetilde{\nabla }} ^3d\Phi) (e_i,e_i,e_j,e_j) - R^N \left( d\Phi (e_i), (\bm{\widetilde{\nabla }} d\Phi)(e_j,e_j) \right) d\Phi (e_i) , d\Phi \left( \frac{\partial }{\partial t} \right) \right\rangle d\mu _{g_M}.
\end{align*}
}
Therefore we obtain the following theorem. 
\begin{thm}\label{thmQ_2}
Let $(M^m_p,g_M)$ be a compact pseudo-Riemannian manifold, 
$(N^n_q,g_N)$ a pseudo-Riemannian manifold 
and $\varphi : M \rightarrow N$ a $C^{\infty}$-map. 
Consider a $C^{\infty}$-variation $\{\varphi_t\} _{t\in I}$ of $\varphi$ with variational vector field $V$.
Then the following formula holds
\begin{align*}
&\quad \left. \frac{d}{dt} I^{\mathcal{Q}_2}(\varphi _t)\right|_{t=0} \\
&= 2\int _M \left\langle \sum _{i,j} \varepsilon _i\varepsilon _j \left\{ \big( \widetilde{\nabla  }^3 d\varphi \big) \left( e_i,e_i,e_j,e_j \right) + R^N \big( ( \widetilde{ \nabla } d\varphi ) (e_i,e_i), d\varphi (e_j) \big) d\varphi (e_j) \right\} ,V \right\rangle d\mu _{g_M},
\end{align*}
where $\{e_i\} _{=1}^m$ is a local pseudo-orthonormal frame field of $(M^m_p,g_M)$ with 
$g_M(e_i,e_j) = \varepsilon _i\delta _{ij}$, $\varepsilon _1=\cdots =\varepsilon _p=-1$, 
$\varepsilon _{p+1} = \cdots = \varepsilon _m=1$. 
\end{thm}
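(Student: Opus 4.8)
The proof runs parallel to that of Theorem~\ref{thmQ_1}. The plan is to extend the variation to a smooth map $\Phi\colon M\times I\to N$, choose a local pseudo-orthonormal frame $\{e_i\}$ on $M$ extended trivially in the $t$-direction (so that $\bm{\nabla}_{\partial/\partial t}e_i=\bm{\nabla}_{e_i}\partial/\partial t=0$ and $[e_i,\partial/\partial t]=0$), differentiate $I^{\mathcal{Q}_2}(\varphi_t)$ under the integral sign, and transfer the $t$-derivative onto the second fundamental form; this is the content of (\ref{Q_2_t}).

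Next I would commute $\bm{\overline{\nabla}}_{\partial/\partial t}$ past $\bm{\widetilde{\nabla}}_{e_i}$ using the curvature $\widetilde{R}$ of the induced connection on $T^*(M\times I)\otimes\Phi^{-1}TN$. Because $[e_i,\partial/\partial t]=0$ and the $R^M$-part of $\widetilde{R}(e_i,\partial/\partial t)$ vanishes, only the $R^N$-term survives, yielding (\ref{1st_2}) and hence (\ref{Q_2_t_2}): a $\widetilde{\nabla}^2$-term paired against $\bm{\widetilde{\nabla}}d\Phi$ plus a curvature term.

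The main step is the integration-by-parts identity (\ref{lemma2}). I would introduce the globally defined vector fields $\widehat{X}_t$ and $\widehat{Y}_t$ on $M$ obtained by contracting a frame index into the appropriate tensor fields, and compute $\mathrm{div}\widehat{X}_t$ and $\mathrm{div}\widehat{Y}_t$. The computation is cleanest when evaluated at a point $x$ in a geodesic frame, so that all $\nabla_{e_i}e_j$ terms drop out at $x$; since each term of the result is tensorial, the identities extend to arbitrary pseudo-orthonormal frames. One should note the factor $2$ arising from differentiating the repeated-index expression $(\bm{\widetilde{\nabla}}d\Phi)(e_j,e_j)$, which is the only place the $\mathcal{Q}_2$-computation differs cosmetically from the $\mathcal{Q}_1$-one. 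Green's theorem, valid since $M$ is compact, gives $\int_M\mathrm{div}\widehat{X}_t\,d\mu_{g_M}=\int_M\mathrm{div}\widehat{Y}_t\,d\mu_{g_M}=0$; subtracting the two resulting tensorial identities cancels the common cross-term $\langle(\bm{\widetilde{\nabla}}d\Phi)(e_i,\partial/\partial t),(\bm{\widetilde{\nabla}}^2d\Phi)(e_i,e_j,e_j)\rangle$ and leaves exactly (\ref{lemma2}).

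Finally, substituting (\ref{lemma2}) into (\ref{Q_2_t_2}) collapses the first integral into $2\int_M\sum_{i,j}\varepsilon_i\varepsilon_j\langle(\bm{\widetilde{\nabla}}^3d\Phi)(e_i,e_i,e_j,e_j),d\Phi(\partial/\partial t)\rangle\,d\mu_{g_M}$, while for the curvature term I would apply the antisymmetry $R^N(A,B)=-R^N(B,A)$ and relabel $i\leftrightarrow j$ (legitimate because $\varepsilon_i\varepsilon_j$ is symmetric) to rewrite $-R^N(d\Phi(e_i),(\bm{\widetilde{\nabla}}d\Phi)(e_j,e_j))d\Phi(e_i)$ as $R^N((\bm{\widetilde{\nabla}}d\Phi)(e_i,e_i),d\Phi(e_j))d\Phi(e_j)$. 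Setting $t=0$, so that $d\Phi(\partial/\partial t)|_{t=0}=V$ and $\bm{\widetilde{\nabla}}d\Phi|_{t=0}=\widetilde{\nabla}d\varphi$, gives the asserted formula. The only delicate point is the bookkeeping in the lemma---keeping the two trace slots of $\widetilde{\nabla}^3d\varphi$ in the correct order and tracking the factor-of-$2$ cross-terms---after which everything is routine.
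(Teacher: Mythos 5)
Your proposal is correct and follows essentially the same route as the paper: the commutation identity giving (\ref{1st_2}), the divergence vector fields $\widehat{X}_t$ and $\widehat{Y}_t$ evaluated in a geodesic frame, Green's theorem to obtain (\ref{lemma2}), and the final index relabeling with the antisymmetry of $R^N$. No gaps.
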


For a map $\varphi \in C^{\infty }(M,N)$,
we define $W_2(\varphi ) \in \Gamma (\varphi ^{-1}TN)$ by
\[
W_2(\varphi ) := \sum _{i,j} \varepsilon _i\varepsilon _j \left\{ \big( \widetilde{\nabla } ^3 d\varphi \big) \left( e_i,e_i,e_j,e_j \right) + R^N \big( ( \widetilde{\nabla } d\varphi ) (e_i,e_i), d\varphi (e_j) \big) d\varphi (e_j) \right\} .
\]
Hence $\varphi $ is a $\mathcal{Q}_2$-map if and only if
$W_2(\varphi) = 0$.

\begin{rem}
For a pseudo-Riemannian manifold $(M^m_p,g_M)$, 
if the index $p=0$ then $(M^m_0,g_M)$ is a Riemannian manifold. 
Therefore a map $\varphi : (M^m_p,g_M) \rightarrow (N^n_q,g_N)$
between Riemannian manifolds is a $\mathcal{Q}_1$-map if and only if
\[
\sum _{i,j} \left\{ \big( \widetilde{\nabla } ^3 d\varphi \big) \left( e_i,e_j,e_i,e_j \right) + R^N \big( ( \widetilde{\nabla } d\varphi ) (e_i,e_j), d\varphi (e_i),  \big) d\varphi (e_j) \right\} = 0,
\]
where $\{e_i\} _{i=1}^m$ is a local orthonormal frame field of $(M^m,g_M)$. 
Similarly, we have that a map $\varphi : (M^m_p,g_M) \rightarrow (N^n_q,g_N)$ 
between Riemannian manifolds is a $\mathcal{Q}_2$-map if and only if
\[
\sum _{i,j} \left\{ \big( \widetilde{\nabla } ^3 d\varphi \big) \left( e_i,e_i,e_j,e_j \right) + R^N \big( ( \widetilde{\nabla } d\varphi ) (e_i,e_i), d\varphi (e_j),  \big) d\varphi (e_j) \right\} = 0.
\]
\end{rem}

By Theorem \ref{thmQ_1} and Theorem \ref{thmQ_2}, 
we obtain all the first variational formulae of the integral invariants 
which belong to the space spanned by the $\mathcal{Q}_1$-energy and $\mathcal{Q}_2$-energy. 

By comparing the first variational formula of the bienergy (c.f. \cite{Jiang})
and that of $\mathcal{Q}_2$-energy (Theorem~\ref{thmQ_2}),
we have the following proposition. 
\begin{prop}
Let $\varphi : M \rightarrow N$ be a $C^{\infty}$-map 
between pseudo-Riemannian manifolds $(M^m_p,g_M)$ and $(N^n_q,g_N)$. 
Then the following formula holds
\[
-\overline{\nabla }^*\overline{\nabla }\tau (\varphi ) = \sum _{i,j}\varepsilon _i\varepsilon _j (\widetilde{\nabla }^3d\varphi )(e_i,e_i,e_j,e_j),
\]
where $-\overline{\nabla }^*\overline{\nabla }$ is the rough Laplacian and 
$\{e_i\}_{i=1}^m$ is a local pseudo-orthonormal frame field of $(M^m_p,g_M)$.
\begin{proof}
For any $V\in \Gamma (\varphi ^{-1}TN)$, we define vector fields on $M$ by
\[
W := \sum _{i,j}^m \varepsilon _i\varepsilon _j \left\langle V, \overline{\nabla }_{e_i}\big( (\widetilde{\nabla }d\varphi )(e_j,e_j) \big) \right\rangle e_i 
\]
and
\[
W':= \sum _{i,j}^m \varepsilon _i\varepsilon _j \left\langle V, (\widetilde{\nabla }^2d\varphi )(e_i,e_j,e_j) \right\rangle e_i, 
\]
where $\{e_i\}_{i=1}^m$ is a local pseudo-orthonormal frame field of $(M^m_p,g_M)$.
Then, assuming that $\{e_i\}$ is a geodesic frame field around a point $x\in M$, we have
\begin{align*}
W'_x &= \sum _{i,j} \varepsilon_i\varepsilon _j \left\langle V_x, \big( \overline{\nabla }_{e_i}\big( (\widetilde{\nabla }d\varphi )(e_j,e_j) \big) \big) _x- 2\big( (\widetilde{\nabla }d\varphi )(\nabla _{e_i}e_j,e_j) \big)_x \right\rangle (e_i)_x \\
&= \sum _{i,j}\varepsilon _i\varepsilon _j \left\langle V_x, \big( \overline{\nabla }_{e_i} \big( (\widetilde{\nabla }d\varphi )(e_j,e_j) \big) \big) _x \right\rangle (e_i)_x \\
&= W_x.
\end{align*}
Therefore $W=W'$. Thus, 
\begin{align*}
0 &= \mathrm{div}(W-W')_x \\
&= \sum _{i,j} \varepsilon _i\varepsilon_j \left\{ \left\langle \big( \overline{\nabla }_{e_i}V \big) _x, \big( \overline{\nabla }_{e_i}\big( (\widetilde{\nabla }d\varphi)(e_j,e_j) \big) \big) _x \right\rangle  + \left\langle V_x, \big( \overline{\nabla }_{e_i} \big( \overline{\nabla }_{e_i} \big( (\widetilde{\nabla }d\varphi) (e_j,e_j) \big) \big) \big)_x \right\rangle \right\} \\
&\quad - \sum _{i,j} \varepsilon _i\varepsilon _j \left\{ \left\langle \big( \overline{\nabla }_{e_i}V \big) _x, \big( \big( \widetilde{\nabla }^2d\varphi \big) (e_i,e_j,e_j) \big)_x \right\rangle + \left\langle V_x, \big( \overline{\nabla }_{e_i}\big( (\widetilde{\nabla }^2d\varphi )(e_i,e_j,e_j) \big) \big) _x \right\rangle \right\} \\
&= \left\langle V_x, \big( -\overline{\nabla }^*\overline{\nabla}\tau (\varphi ) \big) _x - \sum _{i,j} \varepsilon _i\varepsilon _j \big( (\widetilde{\nabla }^3d\varphi)(e_i,e_i,e_j,e_j) \big) _x \right\rangle ,
\end{align*}
where $\{e_i\}_{i=1}^m$ is a geodesic frame field around a point $x\in M$. 
So we have
\[
-\overline{\nabla}^*\overline{\nabla }\tau (\varphi ) = \sum _{i,j} \varepsilon _i\varepsilon _j (\widetilde{\nabla }^3d\varphi )(e_i,e_i,e_j,e_j),
\]
where $\{e_i\} _{i=1}^m$ is an arbitrary local pseudo-orthonormal frame field.
\end{proof}
\end{prop}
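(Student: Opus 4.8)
The plan is to prove the identity pointwise, by the same divergence (``Green's theorem'') device used in the two preceding lemmas; no compactness is actually needed. Recall that $\tau(\varphi)=\sum_j\varepsilon_j(\widetilde{\nabla}d\varphi)(e_j,e_j)$ is a globally defined section of $\varphi^{-1}TN$ and that $-\overline{\nabla}^*\overline{\nabla}\tau(\varphi)=\sum_i\varepsilon_i\big(\overline{\nabla}_{e_i}\overline{\nabla}_{e_i}-\overline{\nabla}_{\nabla_{e_i}e_i}\big)\tau(\varphi)$. Both sides of the asserted formula are sections of $\varphi^{-1}TN$, so, the fiber metric being nondegenerate, it suffices to prove that their inner products with an arbitrary $V\in\Gamma(\varphi^{-1}TN)$ agree at an arbitrary $x\in M$. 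To this end I introduce the two globally defined vector fields on $M$
\[
W:=\sum_{i}\varepsilon_i\big\langle V,\ \overline{\nabla}_{e_i}\tau(\varphi)\big\rangle e_i,\qquad
W':=\sum_{i,j}\varepsilon_i\varepsilon_j\big\langle V,\ (\widetilde{\nabla}^2d\varphi)(e_i,e_j,e_j)\big\rangle e_i .
\]

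The first step is to observe that $W=W'$: taking $\{e_i\}$ to be a geodesic frame around a given point, the definition of $\widetilde{\nabla}^2d\varphi$ gives $\overline{\nabla}_{e_i}\big((\widetilde{\nabla}d\varphi)(e_j,e_j)\big)=(\widetilde{\nabla}^2d\varphi)(e_i,e_j,e_j)+2(\widetilde{\nabla}d\varphi)(\nabla_{e_i}e_j,e_j)$, whose last term vanishes at the centre, so after summing over $j$ against $\varepsilon_j$ we get $\overline{\nabla}_{e_i}\tau(\varphi)=\sum_j\varepsilon_j(\widetilde{\nabla}^2d\varphi)(e_i,e_j,e_j)$ at that point. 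Hence $W-W'\equiv 0$ and $\mathrm{div}(W-W')\equiv 0$. Next I would compute $\mathrm{div}(W-W')$ at $x$ by the product rule, using a geodesic frame at $x$ so that all terms containing $\langle\nabla_{e_k}e_i,e_k\rangle$, $\nabla_{e_i}e_i$ or $\nabla_{e_i}e_j$ drop out. The ``$\overline{\nabla}_{e_i}V$''-contributions of $\mathrm{div}\,W$ and of $\mathrm{div}\,W'$ then coincide (again because $\overline{\nabla}_{e_i}\tau(\varphi)=\sum_j\varepsilon_j(\widetilde{\nabla}^2d\varphi)(e_i,e_j,e_j)$ at $x$) and cancel. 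In the ``$V$''-contribution of $\mathrm{div}\,W$ one differentiates the global section $\overline{\nabla}_{e_i}\tau(\varphi)$ once more and sums, obtaining $\big\langle V,\sum_i\varepsilon_i\overline{\nabla}_{e_i}\overline{\nabla}_{e_i}\tau(\varphi)\big\rangle=\big\langle V,-\overline{\nabla}^*\overline{\nabla}\tau(\varphi)\big\rangle$ at $x$; in the ``$V$''-contribution of $\mathrm{div}\,W'$ the definition of $\widetilde{\nabla}^3d\varphi$, together with the symmetry $(\widetilde{\nabla}^2d\varphi)(X,Y,Z)=(\widetilde{\nabla}^2d\varphi)(X,Z,Y)$, gives $\overline{\nabla}_{e_i}\big((\widetilde{\nabla}^2d\varphi)(e_i,e_j,e_j)\big)=(\widetilde{\nabla}^3d\varphi)(e_i,e_i,e_j,e_j)$ at $x$. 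Hence $0=\mathrm{div}(W-W')_x$ becomes
\[
\Big\langle V_x,\ \big(-\overline{\nabla}^*\overline{\nabla}\tau(\varphi)\big)_x-\sum_{i,j}\varepsilon_i\varepsilon_j(\widetilde{\nabla}^3d\varphi)(e_i,e_i,e_j,e_j)_x\Big\rangle=0 ,
\]
and letting $V$ and $x$ vary yields the claim.

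An alternative, matching the sentence preceding the statement, is to compare $\frac{d}{dt}I^{\mathcal{Q}_2}(\varphi_t)\big|_{t=0}=2\int_M\langle W_2(\varphi),V\rangle\,d\mu_{g_M}$ from Theorem~\ref{thmQ_2} with Jiang's first variational formula for $E_2$ (cf.\ \cite{Jiang}), using $I^{\mathcal{Q}_2}(\varphi)=2E_2(\varphi)$. Since $\tau(\varphi)=\sum_i\varepsilon_i(\widetilde{\nabla}d\varphi)(e_i,e_i)$ and $R^N$ is antisymmetric in its first two arguments, one has the algebraic identity
\[
\sum_{i,j}\varepsilon_i\varepsilon_j R^N\big((\widetilde{\nabla}d\varphi)(e_i,e_i),d\varphi(e_j)\big)d\varphi(e_j)=-\,\mathrm{tr}_{g_M}R^N\big(d\varphi(\cdot),\tau(\varphi)\big)d\varphi(\cdot),
\]
so the curvature terms in $W_2(\varphi)$ and in Jiang's operator $\tau_2(\varphi)$ match, and the comparison collapses to exactly $-\overline{\nabla}^*\overline{\nabla}\tau(\varphi)=\sum_{i,j}\varepsilon_i\varepsilon_j(\widetilde{\nabla}^3d\varphi)(e_i,e_i,e_j,e_j)$. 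This route, however, passes through integration against (compactly supported) test sections, whereas the argument above is purely local.

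The only delicate point is the geodesic-frame bookkeeping in the expansion of $\mathrm{div}(W-W')_x$: one must check that every correction term $\nabla_{e_i}e_j$, $\nabla_{e_i}e_i$, $\langle\nabla_{e_k}e_i,e_k\rangle$ really vanishes at the centre, and apply the second covariant derivative only to the globally defined sections $\tau(\varphi)$ and $\overline{\nabla}_{e_i}\tau(\varphi)$, never to the individual frame-dependent pieces $(\widetilde{\nabla}d\varphi)(e_j,e_j)$. Everything else is the same routine manipulation of divergences already carried out in the lemmas above.
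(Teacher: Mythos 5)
Your argument is correct and is essentially the paper's own proof: the same pair of vector fields $W$ and $W'$ (your $W$ is just the paper's written with $\tau(\varphi)$ in place of $\sum_j\varepsilon_j(\widetilde{\nabla}d\varphi)(e_j,e_j)$), the same pointwise identification $W=W'$ via a geodesic frame, and the same expansion of $\mathrm{div}(W-W')=0$ to cancel the $\overline{\nabla}_{e_i}V$-terms and read off the identity against an arbitrary $V$. The alternative route via Jiang's formula is a reasonable cross-check but is not needed.
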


%%%%%%%%%%%%%%%%%%%%%%%%%%%%%%%%%%%%%%%%%%%%%%%%%%%%%%%%%%%%%%%%%%%
\section{The Euler--Lagrange equation of the Chern--Federer energy}
\label{sec:CF maps}

We inherit the settings in the previous section.
In this section,
we introduce the Chern--Federer energy functional for a map $\varphi : (M^m_p,g_M) \rightarrow (N^n_q,g_N)$
between pseudo-Riemannian manifolds,
which is an integral invariant defined by a homogeneous polynomial of degree two
on $\mathrm{II} (\mathbb{E}^m_p,\mathbb{E}^n_q)$ called the Chern--Federer polynomial.
Then we verify the Euler--Lagrange equation of the Chern--Federer energy functional. 

For $H=(h_{ij}^\alpha )\in \mathrm{II} (\mathbb{E}^m_p,\mathbb{E}^n_q)$, 
the \textit{Chern--Federer polynomial} $\mathrm{CF}(H)$ is defined by
\begin{equation} \label{eq:CF polynomial}
\mathrm{CF}(H) := \mathcal{Q}_2(H) - \mathcal{Q}_1(H).
\end{equation}
From Theorems~\ref{thmQ_1} and \ref{thmQ_2},
the Euler--Lagrange equation of the \textit{Chern--Federer energy functional} $I^{\mathrm{CF}}(\varphi)$ is
\begin{align}
0 &= W_2(\varphi ) - W_1(\varphi ) \nonumber \\
&= \sum _{i,j} \varepsilon _i\varepsilon _j \left\{ (\widetilde{\nabla } ^3d\varphi)(e_i,e_i,e_j,e_j) - (\widetilde{\nabla } ^3d\varphi)(e_i,e_j,e_i,e_j) \right. \nonumber \\
&\quad \left. + R^N\big( (\widetilde{\nabla } d\varphi )(e_i,e_i) ,d\varphi (e_j) \big) d\varphi (e_j) - R^N\big( (\widetilde{\nabla } d\varphi )(e_i,e_j) , d\varphi (e_i) \big) d\varphi (e_j)  \right\} , \label{EL_CF}
\end{align}
where $\{e_i\} _{i=1}^m$ is a local pseudo-orthonormal frame field of $(M^m_p,g_M)$. 
In this section, we give alternative expressions of the Euler--Lagrange equation of the Chern--Federer energy functional. 
In particular, 
the Euler--Lagrange equation of $I^{\mathrm{CF}}(\varphi)$ is a second-order partial differential equation for $\varphi$. 
Moreover, we describe the symmetry of the Euler--Lagrange equation of the Chern--Federer energy functional and that of the Chern--Federer polynomial. 

We also introduce the Willmore--Chen energy functional,
which is an integral invariant defined by the homogeneous polynomial of degree two called the Willmore--Chen polynomial.
For $H=(h_{ij}^\alpha )\in \mathrm{II}(\mathbb{E}^m_p,\mathbb{E}^n_q)$, 
the \textit{Willmore--Chen polynomial} $\mathrm{WC}(H)$ is defined by
\[
\mathrm{WC}(H) := m\mathcal{Q}_1(H) - \mathcal{Q}_2(H).
\]

Let $\alpha$ and $\beta$ be constant numbers such that $\alpha^{2} + \beta^{2} \neq 0$. 
A $C^{\infty}$-map $\varphi : M \rightarrow N$ is called an \textit{$(\alpha \mathcal{Q}_{1} + \beta \mathcal{Q}_{2})$-map}
if it satisfies
\begin{equation*}
\alpha W_{1}(\varphi) + \beta W_{2}(\varphi) = 0.
\end{equation*}
By definition, an $(\alpha \mathcal{Q}_{1} + \beta \mathcal{Q}_{2})$-map $\varphi$ is 
\begin{itemize} 
\item a $\mathcal{Q}_{1}$-map when $(\alpha, \beta) = (1, 0)$; 
\item a $\mathcal{Q}_{2}$-map, that is, a biharmonic map, when $(\alpha, \beta) = (0, 1)$; 
\item a \textit{Chern--Federer map} when $(\alpha, \beta) = (-1, 1)$; 
\item a \textit{Willmore--Chen map} when $(\alpha, \beta) = (m, -1)$. 
\end{itemize}
In Section~\ref{sec:CF submanifolds}, we construct some examples of these maps. 

%%%%%%%%%%%%%%%%%%%%%%%%%%%%%%%%%%%%%%%%%%%%%%%%%%%%%%%%%%%%%%%%%%%%
\subsection{Alternative expression of the Euler--Lagrange equation of the Chern--Federer energy functional I}
\label{subsec:alternative expression 1}

First, we prepare the following lemmas. 
 \begin{lem}\label{3rd_fundamental}
For a smooth map $\varphi : M \rightarrow N$ and 
$X,Y,Z\in \Gamma (TM)$, the following equation holds:
\begin{align*}
(\widetilde{\nabla }^2d\varphi ) (X,Y,Z) - (\widetilde{\nabla }^2d\varphi )(Y,X,Z)
= R^N\left( d\varphi (X),d\varphi (Y) \right) d\varphi (Z) - d\varphi \left( R^M(X,Y)Z \right) .
\end{align*}
\begin{proof}
Let $\{ e_i \} _{i=1}^m$ be a geodesic frame field of $(M^m_p,g_M)$ around $x\in M$. 
At $x$, we have
\begin{align*}
&\quad (\widetilde{\nabla }^2d\varphi )(e_i,e_j,e_k) - (\widetilde{\nabla }^2d\varphi )(e_j,e_i,e_k) \\
&= \overline{\nabla }_{e_i} \left( \overline{\nabla }_{e_j} \left( d\varphi (e_k) \right) - d\varphi (\nabla _{e_j}e_k) \right) - \overline{\nabla }_{e_j} \left( \overline{\nabla }_{e_i}\left( d\varphi (e_k)\right)  - d\varphi (\nabla _{e_i}e_k) \right) \\
&= \left( \overline{\nabla }_{e_i}\overline{\nabla }_{e_j} - \overline{\nabla }_{e_j}\overline{\nabla }_{e_i} - \overline{\nabla }_{[e_i,e_j]} \right) d\varphi (e_k) \\
&\quad -\left\{ (\widetilde{\nabla }d\varphi )\left( e_i, \nabla _{e_j}e_k \right) + d\varphi \left( \nabla _{e_i}\nabla _{e_j}e_k  \right) \right\} + \left\{ (\widetilde{\nabla }d\varphi ) (e_j,\nabla _{e_i}e_k) + d\varphi \left( \nabla _{e_j}\nabla _{e_i}e_k \right) \right\} \\
&= R^{\varphi ^{-1}TN} (e_i,e_j)d\varphi (e_k) - d\varphi \left( \nabla _{e_i}\nabla _{e_j}e_k - \nabla _{e_j}\nabla _{e_i}e_k - \nabla _{[e_i,e_j]}e_k \right) \\
&= R^N\left( d\varphi (e_i),d\varphi (e_j) \right) d\varphi (e_k) - d\varphi \left( R^M(e_i,e_j)e_k \right) .
\end{align*}
Since all terms of the first and last formulae are tensors, we have the lemma.  
\end{proof}
\end{lem}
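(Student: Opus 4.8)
The plan is to exploit the tensoriality of both sides in $X$, $Y$, $Z$ and reduce to a pointwise computation in a geodesic frame. Fix $x \in M$ and let $\{e_i\}_{i=1}^m$ be a geodesic frame field around $x$, so that $(\nabla_{e_i}e_j)_x = 0$ and $[e_i,e_j]_x = 0$ for all $i,j$. Since every term appearing on either side of the asserted identity is a tensor in its arguments, it suffices to establish the identity when $(X,Y,Z) = (e_i,e_j,e_k)$ and everything is evaluated at $x$.

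First I would unravel $(\widetilde{\nabla}^2 d\varphi)(e_i,e_j,e_k)$ at $x$: by definition it equals $\overline{\nabla}_{e_i}\big((\widetilde{\nabla}d\varphi)(e_j,e_k)\big) - (\widetilde{\nabla}d\varphi)(\nabla_{e_i}e_j,e_k) - (\widetilde{\nabla}d\varphi)(e_j,\nabla_{e_i}e_k)$, and the last two terms vanish at $x$ because $(\nabla_{e_i}e_j)_x = (\nabla_{e_i}e_k)_x = 0$ and $\widetilde{\nabla}d\varphi$ is a tensor. Substituting $(\widetilde{\nabla}d\varphi)(e_j,e_k) = \overline{\nabla}_{e_j}(d\varphi(e_k)) - d\varphi(\nabla_{e_j}e_k)$ and applying $\overline{\nabla}_{e_i}$ gives $\overline{\nabla}_{e_i}\overline{\nabla}_{e_j}(d\varphi(e_k)) - \overline{\nabla}_{e_i}\big(d\varphi(\nabla_{e_j}e_k)\big)$; since $(\nabla_{e_j}e_k)_x = 0$, tensoriality of $\widetilde{\nabla}d\varphi$ again shows the second term equals $d\varphi(\nabla_{e_i}\nabla_{e_j}e_k)$ at $x$.

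Then I would subtract the expression obtained by interchanging $i$ and $j$. Using $[e_i,e_j]_x = 0$, the first-order terms combine as $\big(\overline{\nabla}_{e_i}\overline{\nabla}_{e_j} - \overline{\nabla}_{e_j}\overline{\nabla}_{e_i} - \overline{\nabla}_{[e_i,e_j]}\big)d\varphi(e_k) = R^{\varphi^{-1}TN}(e_i,e_j)d\varphi(e_k)$, and since we may freely insert the term $\nabla_{[e_i,e_j]}e_k$ (which vanishes at $x$), the second-order terms combine as $-d\varphi\big(\nabla_{e_i}\nabla_{e_j}e_k - \nabla_{e_j}\nabla_{e_i}e_k - \nabla_{[e_i,e_j]}e_k\big) = -d\varphi(R^M(e_i,e_j)e_k)$. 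Finally, invoking the identity $R^{\varphi^{-1}TN}(X,Y)d\varphi(Z) = R^N(d\varphi(X),d\varphi(Y))d\varphi(Z)$ recalled in the preliminaries yields the claim at $x$, and tensoriality extends it to arbitrary $X,Y,Z \in \Gamma(TM)$. The only delicate point is the bookkeeping around which terms survive at $x$: although $\nabla_{e_j}e_k$ vanishes there, its further covariant derivatives do not, and one must track these precisely so that the curvature term $d\varphi(R^M(e_i,e_j)e_k)$ assembles correctly rather than being discarded.
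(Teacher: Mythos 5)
Your proposal is correct and follows essentially the same route as the paper's proof: reduce by tensoriality to a pointwise computation in a geodesic frame at $x$, expand $\widetilde{\nabla}^2 d\varphi$ so that the surviving terms assemble into $R^{\varphi^{-1}TN}(e_i,e_j)d\varphi(e_k) - d\varphi(R^M(e_i,e_j)e_k)$, and invoke $R^{\varphi^{-1}TN}(X,Y) = R^N(d\varphi(X),d\varphi(Y))$. The bookkeeping you flag at the end (that $\overline{\nabla}_{e_i}(d\varphi(\nabla_{e_j}e_k))$ contributes $d\varphi(\nabla_{e_i}\nabla_{e_j}e_k)$ at $x$ even though $\nabla_{e_j}e_k$ vanishes there) is handled correctly and matches the paper.
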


\begin{lem}\label{4th_fundamental}
For a smooth map $\varphi : M \rightarrow N$ and 
$X,Y,Z,W\in \Gamma (TM)$, the following equation holds:
\begin{align*}
&\quad  (\widetilde{\nabla }^3d\varphi )(X,Y,Z,W) - (\widetilde{\nabla }^3d\varphi)(X,Z,Y,W) \\
&= \left( \nabla R^N \right) \left( d\varphi (X), d\varphi (Y), d\varphi (Z) \right) d\varphi (W) + R^N \big( (\widetilde{\nabla }d\varphi)(X,Y), d\varphi (Z) \big) d\varphi (W)  \\
&\quad + R^N \big( d\varphi (Y) , (\widetilde{\nabla }d\varphi)(X,Z) \big) d\varphi (W)+ R^N\left( d\varphi (Y),d\varphi (Z) \right) (\widetilde{\nabla }d\varphi) (X,W) \\
&\quad - (\widetilde{\nabla }d\varphi )\left( X,R^M(Y,Z)W \right)  -d\varphi \left( \left( \nabla R^M \right) (X,Y,Z) W \right). 
\end{align*}
\begin{proof}
First, we show that the following equation:
\begin{align}
&\quad (\widetilde{\nabla }^3d\varphi )(X,Y,Z,W) - (\widetilde{\nabla }^3d\varphi )(X,Z,Y,W) \nonumber \\
&= \big( \nabla R^{\varphi ^{-1}TN} \big) (X,Y,Z) d\varphi (W) + R^{\varphi ^{-1}TN} (Y,Z) (\widetilde{\nabla }d\varphi )(X,W) \nonumber \\
&\quad - (\widetilde{\nabla }d\varphi )(X, R^M(Y,Z)W) - d\varphi \left( \left( \nabla R^M \right) (X,Y,Z)W \right) \label{CF_first},
\end{align}
where $X,Y,Z,W\in \Gamma (TM)$. 
Let $\{ e_i \} _{i=1}^m$ be a geodesic frame field of $(M^m_p,g_M)$ around $x\in M$. 
At $x$, we have
\begin{align*}
&\quad (\widetilde{\nabla }^3d\varphi )(e_i,e_j,e_k,e_l) - (\widetilde{\nabla }^3d\varphi )(e_i,e_k,e_j,e_l) \\
&= \overline{\nabla }_{e_i}\left( R^N\left( d\varphi (e_j),d\varphi (e_k) \right) d\varphi (e_l) \right) - (\widetilde{\nabla }d\varphi ) \left( e_i, R^M(e_j,e_k)e_l \right) - d\varphi \left( \nabla _{e_i} \left( R^M(e_j,e_k)e_l \right) \right) \\
&= \big( \nabla R^{\varphi ^{-1}TN} \big) (e_i,e_j,e_k)d\varphi (e_l) + R^{\varphi ^{-1}TN} (e_j,e_k)(\widetilde{\nabla }d\varphi )(e_i,e_l) - (\widetilde{\nabla }d\varphi )\left( e_i, R^M(e_j,e_k)e_l \right) \\
&\quad -d\varphi \left( \big( \nabla R^M \big) (e_i,e_j,e_k) e_l \right) .
\end{align*}
Here, the first equality holds because of Lemma~\ref{3rd_fundamental}. 
Since all terms of the first and last formulae are tensors, we have (\ref{CF_first}). 
Then, on $\mathrm{End}\varphi ^{-1}TN$, we have
\begin{align}
R^{\varphi ^{-1}TN}(Y,Z) \big((\widetilde{\nabla }d\varphi )(X,W)\big)
= R^N(d\varphi(Y), d\varphi(Z)) \big((\widetilde{\nabla }d\varphi )(X,W)\big), \label{CF_2nd}
\end{align}
where $X,Y,Z,W\in \Gamma (TM)$, 
since the following equation holds:
\[
R^{\varphi ^{-1}TN} (X,Y) = (\varphi^{-1}R^N)(X,Y) = R^N(d\varphi (X), d\varphi (Y)).
\]
Also, we can verify the following equation: 
\begin{align}
\big( \nabla R^{\varphi ^{-1}TN} \big) (X,Y,Z) 
&= (\nabla R^N) (d\varphi (X), d\varphi (Y), d\varphi (Z)) \nonumber \\
&\quad + R^N(d\varphi (Y), (\widetilde{\nabla }d\varphi )(X,Z)) + R^N( (\widetilde{\nabla }d\varphi )(X,Y), d\varphi (Z) ). \label{CF_3rd}
\end{align}
Thus we have
{\small 
\begin{align*}
&\quad \big( \nabla R^{\varphi ^{-1}TN} \big) (X,Y,Z) \\
&= \overline{\nabla } _X(R^{\varphi ^{-1}TN} (Y,Z)) - R^{\varphi ^{-1}TN} (\nabla _XY,Z) - R^{\varphi ^{-1}TN} (Y, \nabla _XZ) \\
&= \overline{\nabla }_X(R^N(d\varphi (Y), d\varphi (Z))) - R^N (d\varphi (\nabla _XY), d\varphi (Z)) - R^N(d\varphi (Y), d\varphi (\nabla _XZ)) \\
&= \overline{\nabla }_X (R^N (d\varphi (Y), d\varphi (Z))) + R^N ((\widetilde{\nabla }d\varphi )(X,Y), d\varphi (Z)) + R^N(d\varphi (Y), (\widetilde{\nabla }d\varphi )(X,Z)) \\
&\quad - R^N( \overline{\nabla }_X(d\varphi (Y)), d\varphi (Z) ) - R^N( d\varphi (Y), \overline{\nabla }_X(d\varphi (Z)) ) \\
&= R^N ((\widetilde{\nabla }d\varphi )(X,Y), d\varphi (Z)) + R^N(d\varphi (Y), (\widetilde{\nabla }d\varphi )(X,Z)) + (\nabla R^N) (d\varphi (X), d\varphi (Y), d\varphi (Z)). 
\end{align*}
}
Here, by taking local frame fields of $(M,g_M)$ and $(N,g_N)$ 
and calculating locally, we verify the last equality. 
Therefore the assertion holds from (\ref{CF_first}), (\ref{CF_2nd}) and (\ref{CF_3rd}). 
\end{proof}
\end{lem}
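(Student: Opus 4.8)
The plan is to reduce the identity to a pointwise statement in a geodesic frame and then to differentiate the second-order identity already established in Lemma~\ref{3rd_fundamental}. Fix $x \in M$ and take a geodesic frame field $\{e_i\}_{i=1}^m$ around $x$, so that all the connection-correction terms in the definitions of $\widetilde{\nabla}^2 d\varphi$ and $\widetilde{\nabla}^3 d\varphi$ vanish at $x$; in particular $(\widetilde{\nabla}^3 d\varphi)(e_i,e_j,e_k,e_l) = \overline{\nabla}_{e_i}\big((\widetilde{\nabla}^2 d\varphi)(e_j,e_k,e_l)\big)$ at $x$. Since every term appearing in the claimed formula is tensorial, it suffices to verify it for the frame vectors at $x$, and the statement for arbitrary $X,Y,Z,W \in \Gamma(TM)$ then follows.

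First I would antisymmetrize in the second and third slots: at $x$,
\[
(\widetilde{\nabla}^3 d\varphi)(e_i,e_j,e_k,e_l) - (\widetilde{\nabla}^3 d\varphi)(e_i,e_k,e_j,e_l) = \overline{\nabla}_{e_i}\Big[(\widetilde{\nabla}^2 d\varphi)(e_j,e_k,e_l) - (\widetilde{\nabla}^2 d\varphi)(e_k,e_j,e_l)\Big],
\]
and by Lemma~\ref{3rd_fundamental} the bracket equals $R^{\varphi^{-1}TN}(e_j,e_k)d\varphi(e_l) - d\varphi\big(R^M(e_j,e_k)e_l\big)$. Applying $\overline{\nabla}_{e_i}$ via the Leibniz rule, and using that at $x$ one has $\overline{\nabla}_{e_i}(d\varphi(e_l)) = (\widetilde{\nabla}d\varphi)(e_i,e_l)$ and $\nabla_{e_i}\big(R^M(e_j,e_k)e_l\big) = (\nabla R^M)(e_i,e_j,e_k)e_l$, I obtain an intermediate identity in which the left-hand side is expressed through the four terms $(\nabla R^{\varphi^{-1}TN})(e_i,e_j,e_k)d\varphi(e_l)$, $R^{\varphi^{-1}TN}(e_j,e_k)\big((\widetilde{\nabla}d\varphi)(e_i,e_l)\big)$, $(\widetilde{\nabla}d\varphi)(e_i,R^M(e_j,e_k)e_l)$ and $d\varphi\big((\nabla R^M)(e_i,e_j,e_k)e_l\big)$, the last two entering with a minus sign.

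It then remains to rewrite the curvature operator $R^{\varphi^{-1}TN}$ and its covariant derivative $\nabla R^{\varphi^{-1}TN}$ in terms of $R^N$, $\nabla R^N$, $d\varphi$ and $\widetilde{\nabla}d\varphi$. For the curvature itself this is the standard identity $R^{\varphi^{-1}TN}(Y,Z) = R^N(d\varphi(Y),d\varphi(Z))$ acting on sections of $\varphi^{-1}TN$. For the covariant derivative I would expand $(\nabla R^{\varphi^{-1}TN})(X,Y,Z) = \overline{\nabla}_X\big(R^{\varphi^{-1}TN}(Y,Z)\big) - R^{\varphi^{-1}TN}(\nabla_X Y,Z) - R^{\varphi^{-1}TN}(Y,\nabla_X Z)$, substitute $R^{\varphi^{-1}TN}(\cdot,\cdot) = R^N(d\varphi(\cdot),d\varphi(\cdot))$, and compute $\overline{\nabla}_X$ through a local frame of $TN$ along $\varphi$: the derivative falling on the tensor $R^N$ produces the pullback $(\nabla R^N)(d\varphi(X),d\varphi(Y),d\varphi(Z))$, while the derivatives falling on the two vector arguments combine with the $\nabla_X Y$ and $\nabla_X Z$ correction terms into $R^N\big((\widetilde{\nabla}d\varphi)(X,Y),d\varphi(Z)\big)$ and $R^N\big(d\varphi(Y),(\widetilde{\nabla}d\varphi)(X,Z)\big)$. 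Substituting these expansions into the intermediate identity and collecting terms yields the asserted formula.

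I expect the last step to be the main obstacle: it requires careful bookkeeping of the covariant derivative of the pulled-back curvature tensor — keeping straight which of the two vector arguments of $R^N$ absorbs the $(\widetilde{\nabla}d\varphi)$-insertion, and with which sign — together with consistent use of the compatibility of $\overline{\nabla}$ on $\varphi^{-1}TN$ with the Levi--Civita connection of $(N,g_N)$. The remaining steps are a routine combination of Lemma~\ref{3rd_fundamental}, the Leibniz rule, and the vanishing of the connection coefficients of the geodesic frame at $x$.
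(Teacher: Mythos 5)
Your proposal is correct and follows essentially the same route as the paper: reduce to a geodesic frame at a point, apply $\overline{\nabla}_{e_i}$ to the second-order commutation identity of Lemma~\ref{3rd_fundamental} to get the intermediate formula (\ref{CF_first}), and then convert $R^{\varphi^{-1}TN}$ and $\nabla R^{\varphi^{-1}TN}$ into $R^N$, $\nabla R^N$ and $(\widetilde{\nabla}d\varphi)$-insertions exactly as in (\ref{CF_2nd}) and (\ref{CF_3rd}). The step you flag as the main obstacle (the expansion of $\nabla R^{\varphi^{-1}TN}$) is carried out in the paper precisely as you describe, with the same placement and signs of the two curvature-insertion terms.
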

\begin{rem}
Recall that $\nabla R^{\varphi^{-1}TN}$ is the derivative of the curvature tensor field $R^{\varphi ^{-1}TN}$, defined by
\begin{align*}
\big( \nabla R^{\varphi ^{-1}TN} \big) (X,Y,Z) s
&:= \overline{\nabla }_X\big( R^{\varphi ^{-1}TN}(Y,Z)s \big) - R^{\varphi ^{-1}TN}\left( \nabla _XY,Z \right) s \\
&\quad - R^{\varphi ^{-1}TN}\left( Y,\nabla _XZ \right) s - R^{\varphi ^{-1}TN}(Y,Z) \overline{\nabla }_Xs,
\end{align*}
where $X,Y,Z\in \Gamma (TM)$ and $s\in \Gamma (\varphi ^{-1}TN)$. 
\end{rem}

Using Lemma \ref{4th_fundamental}, we obtain the following proposition. 
\begin{prop}\label{CFmap_eq}
A smooth map $\varphi : M \rightarrow N$ is a Chern--Federer map if and only if 
\begin{align}
0 &= \sum _{i,j} \varepsilon _i\varepsilon _j \left\{ \left( \nabla R^N \right) \left( d\varphi (e_i),d\varphi (e_i),d\varphi (e_j)\right) d\varphi (e_j) - (\widetilde{\nabla }d\varphi) \left( e_i, R^M(e_i,e_j)e_j \right) \right. \nonumber \\
&\quad - d\varphi \left( \left( \nabla R^M \right) (e_i,e_i,e_j) e_j \right)  +2R^N\big( (\widetilde{\nabla }d\varphi)(e_i,e_i), d\varphi (e_j) \big) d\varphi (e_j) \nonumber \\
&\quad \left. + 2 R^N\big( d\varphi (e_i), (\widetilde{\nabla }d\varphi )(e_i,e_j) \big) d\varphi (e_j) \right\} , \label{CF_eq}
\end{align}
where $\{ e_i \}_{i=1}^m$ is a local pseudo-orthonormal frame field of $(M^m_p,g_M)$. 
\end{prop}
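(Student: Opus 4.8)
The starting point is the Euler--Lagrange equation (\ref{EL_CF}), which already characterizes $\varphi$ as a Chern--Federer map through the vanishing of $\sum_{i,j}\varepsilon_i\varepsilon_j$ times a bracket containing the difference of third covariant derivatives $(\widetilde{\nabla}^3 d\varphi)(e_i,e_i,e_j,e_j) - (\widetilde{\nabla}^3 d\varphi)(e_i,e_j,e_i,e_j)$ together with the two curvature terms $R^N\big((\widetilde{\nabla}d\varphi)(e_i,e_i),d\varphi(e_j)\big)d\varphi(e_j)$ and $-\,R^N\big((\widetilde{\nabla}d\varphi)(e_i,e_j),d\varphi(e_i)\big)d\varphi(e_j)$. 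The plan is to rewrite that difference of third derivatives using Lemma~\ref{4th_fundamental}, which is tailored precisely for this: substituting $X=e_i$, $Y=e_i$, $Z=e_j$, $W=e_j$ yields exactly $(\widetilde{\nabla}^3 d\varphi)(e_i,e_i,e_j,e_j) - (\widetilde{\nabla}^3 d\varphi)(e_i,e_j,e_i,e_j)$ on the left, and on the right the six terms $(\nabla R^N)(d\varphi(e_i),d\varphi(e_i),d\varphi(e_j))d\varphi(e_j)$, $R^N\big((\widetilde{\nabla}d\varphi)(e_i,e_i),d\varphi(e_j)\big)d\varphi(e_j)$, $R^N\big(d\varphi(e_i),(\widetilde{\nabla}d\varphi)(e_i,e_j)\big)d\varphi(e_j)$, $R^N(d\varphi(e_i),d\varphi(e_j))(\widetilde{\nabla}d\varphi)(e_i,e_j)$, $-(\widetilde{\nabla}d\varphi)(e_i,R^M(e_i,e_j)e_j)$, and $-d\varphi\big((\nabla R^M)(e_i,e_i,e_j)e_j\big)$.

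Next I would substitute this expansion into (\ref{EL_CF}) and collect terms under $\sum_{i,j}\varepsilon_i\varepsilon_j$. The $(\nabla R^N)$ term and the two terms $-(\widetilde{\nabla}d\varphi)(e_i,R^M(e_i,e_j)e_j)$ and $-d\varphi\big((\nabla R^M)(e_i,e_i,e_j)e_j\big)$ carry over verbatim and produce the first three terms of (\ref{CF_eq}). For the curvature terms, observe that the term $R^N\big((\widetilde{\nabla}d\varphi)(e_i,e_i),d\varphi(e_j)\big)d\varphi(e_j)$ produced by Lemma~\ref{4th_fundamental} merges with the identical term already present in (\ref{EL_CF}), giving the coefficient $2$; and, using the skew-symmetry $R^N\big((\widetilde{\nabla}d\varphi)(e_i,e_j),d\varphi(e_i)\big) = -R^N\big(d\varphi(e_i),(\widetilde{\nabla}d\varphi)(e_i,e_j)\big)$, the curvature term of $-W_1(\varphi)$ in (\ref{EL_CF}) rewrites as $+R^N\big(d\varphi(e_i),(\widetilde{\nabla}d\varphi)(e_i,e_j)\big)d\varphi(e_j)$ and merges with the term of the same shape coming from the lemma, again giving coefficient $2$. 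These are the last two terms of (\ref{CF_eq}).

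Finally, the only leftover is $\sum_{i,j}\varepsilon_i\varepsilon_j\,R^N(d\varphi(e_i),d\varphi(e_j))(\widetilde{\nabla}d\varphi)(e_i,e_j)$, and I would argue it vanishes identically: $R^N(d\varphi(e_i),d\varphi(e_j))$ is skew-symmetric in the index pair $(i,j)$, whereas both $(\widetilde{\nabla}d\varphi)(e_i,e_j)$ and the weight $\varepsilon_i\varepsilon_j$ are symmetric, so the double sum is zero. Discarding this term leaves precisely (\ref{CF_eq}), and since (\ref{EL_CF}) is equivalent to $\varphi$ being a Chern--Federer map, the equivalence in the proposition follows. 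All the identities used are tensorial, so one may evaluate everything at a point with a geodesic frame exactly as in Lemmas~\ref{3rd_fundamental} and \ref{4th_fundamental}. I expect the only point requiring care to be the sign-and-multiplicity bookkeeping when the four curvature terms are merged into the two doubled terms of (\ref{CF_eq}); the substantive content is already contained in Lemma~\ref{4th_fundamental}, and what remains is a mechanical, if slightly fiddly, substitution.
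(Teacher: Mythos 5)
Your proposal is correct and is exactly the argument the paper intends: the paper states the proposition with only the remark that it follows from Lemma~\ref{4th_fundamental}, and your substitution of $X=Y=e_i$, $Z=W=e_j$ into that lemma, the merging of the curvature terms into the two doubled ones, and the vanishing of $\sum_{i,j}\varepsilon_i\varepsilon_j R^N(d\varphi(e_i),d\varphi(e_j))(\widetilde{\nabla}d\varphi)(e_i,e_j)$ by the skew/symmetric index argument are precisely the omitted details. No gaps.
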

By the equation (\ref{CF_eq}), 
it can be seen that the Euler--Lagrange equation of the Chern--Federer energy functional for a map $\varphi$ is a second-order partial differential equation for $\varphi$. 

%%%%%%%%%%%%%%%%%%%%%%%%%%%%%%%%%%%%%%%%%%%%%%%%%%%%%%%%%%%%%%%%%%%%%%%%%%%%
\subsection{Alternative expression of the Euler--Lagrange equation of the Chern--Federer energy functional II}
\label{subsec:alternative expression 2}

Here, we express the Chern--Federer energy functional as follows:
\begin{align*}
I^{\mathrm{CF}}(\varphi ) &= I^{\mathcal{Q}_2-\mathcal{Q}_1}(\varphi ) \\
&= \int _M \left( \sum _\alpha \varepsilon '_\alpha \left( \sum_i \varepsilon _i h_{ii}^\alpha \right)^2 - \sum _\alpha \varepsilon '_\alpha \sum _{i,j} \varepsilon _i \varepsilon _j (h_{ij}^\alpha )^2 \right) d\mu _{g_M} \\ 
&= \int _M \sum _\alpha \varepsilon '_\alpha \sum _{i,j} \varepsilon _i \varepsilon _j \mathrm{det} \begin{pmatrix}
h_{ii}^\alpha & h_{ij}^\alpha \\
h_{ij}^\alpha & h_{jj}^\alpha 
\end{pmatrix} d\mu _{g_M}.
\end{align*}
Then we have the following alternative expression of the Euler--Lagrange equation of the Chern--Federer energy functional. 

\begin{thm}\label{thm:symmetry of EL equation}
Let $\varphi : M \rightarrow N$ be a $C^{\infty}$-map between pseudo-Riemannian manifolds. 
We define $(0,4)$-type tensor fields $\mu$ and $\nu$ valued on $\varphi ^{-1}TN$ by
\[
\mu (X_1, X_2, X_3, X_4) := (\widetilde{\nabla }^3d\varphi )( X_1, X_2, X_3, X_4 ) 
\]
and 
\[
\nu (X_1,X_2,X_3,X_4) := R^N\big( (\widetilde{\nabla }d\varphi )(X_3,X_4 ), d\varphi (X_1) \big) d\varphi (X_2) ,
\]
where $X_1,X_2,X_3,X_4\in \Gamma (TM)$.
Then $\varphi$ is a Chern--Federer map if and only if 
\begin{align}
C(\mu + \nu )=0. \label{EL_CF_2}
\end{align}
Here $C$ is the contraction of a $(0,4)$-tensor field on $M$ defined by
\[
C:=\mathrm{det} \begin{pmatrix}
C_{12} & C_{13} \\
C_{24} & C_{34}
\end{pmatrix}, 
\]
where $C_{ij}$ is the contraction of the $i$-th and $j$-th variables.
\end{thm}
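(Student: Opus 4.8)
The plan is to reduce \eqref{EL_CF_2} directly to the Euler--Lagrange equation \eqref{EL_CF} obtained from Theorems~\ref{thmQ_1} and \ref{thmQ_2}. Recall from there that $\varphi$ is a Chern--Federer map precisely when $W_2(\varphi)-W_1(\varphi)=0$, and that the right-hand side of \eqref{EL_CF} is a section of $\varphi^{-1}TN$ which, although written via a pseudo-orthonormal frame, does not depend on the choice of frame. Since the contraction $C$ is built from the metric $g_M$ and is likewise frame-independent, it suffices to establish the pointwise identity
\[
C(\mu+\nu)=W_2(\varphi)-W_1(\varphi)
\]
in an arbitrary local pseudo-orthonormal frame; the theorem then follows at once.

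First I would unwind the determinantal symbol $C=C_{12}C_{34}-C_{13}C_{24}$. For a $(0,4)$-tensor field $T$ on $M$ with values in $\varphi^{-1}TN$ and a local pseudo-orthonormal frame $\{e_i\}_{i=1}^m$ with $g_M(e_i,e_j)=\varepsilon_i\delta_{ij}$, the two double contractions over disjoint pairs of slots read
\[
(C_{12}C_{34})T=\sum_{i,j}\varepsilon_i\varepsilon_j\,T(e_i,e_i,e_j,e_j),\qquad
(C_{13}C_{24})T=\sum_{i,j}\varepsilon_i\varepsilon_j\,T(e_i,e_j,e_i,e_j),
\]
so that
\[
CT=\sum_{i,j}\varepsilon_i\varepsilon_j\bigl\{T(e_i,e_i,e_j,e_j)-T(e_i,e_j,e_i,e_j)\bigr\}.
\]

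Next I would substitute $T=\mu+\nu$ and match the four resulting summands against \eqref{EL_CF}. The $\mu$-part contributes $\sum_{i,j}\varepsilon_i\varepsilon_j\{(\widetilde{\nabla}^3d\varphi)(e_i,e_i,e_j,e_j)-(\widetilde{\nabla}^3d\varphi)(e_i,e_j,e_i,e_j)\}$, which is exactly the pair of third-derivative terms of \eqref{EL_CF}. For the $\nu$-part, by the definition of $\nu$ one has $\nu(e_i,e_i,e_j,e_j)=R^N((\widetilde{\nabla}d\varphi)(e_j,e_j),d\varphi(e_i))d\varphi(e_i)$ and $\nu(e_i,e_j,e_i,e_j)=R^N((\widetilde{\nabla}d\varphi)(e_i,e_j),d\varphi(e_i))d\varphi(e_j)$; relabelling the summation indices $i\leftrightarrow j$ in the first of these (legitimate since they are dummy) rewrites the $\nu$-part as $\sum_{i,j}\varepsilon_i\varepsilon_j\{R^N((\widetilde{\nabla}d\varphi)(e_i,e_i),d\varphi(e_j))d\varphi(e_j)-R^N((\widetilde{\nabla}d\varphi)(e_i,e_j),d\varphi(e_i))d\varphi(e_j)\}$, which is precisely the pair of curvature terms of \eqref{EL_CF}. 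Adding the two parts gives $C(\mu+\nu)=W_2(\varphi)-W_1(\varphi)$, and \eqref{EL_CF_2} follows.

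I do not expect a genuine obstacle: the argument is a bookkeeping identity, and the points needing attention are the correct reading of $C$ as a pair of double contractions over disjoint slots and the frame-independence of the resulting $\varphi^{-1}TN$-valued function (so that it may be evaluated in whatever frame is convenient). As a consistency check I would note the structural parallel with the Chern--Federer polynomial itself, which the display preceding the theorem writes as the same determinantal contraction applied to $\widetilde{\nabla}d\varphi\otimes\widetilde{\nabla}d\varphi$ in place of $\mu+\nu$; this is the symmetry that Theorem~\ref{thm:symmetry of EL equation} is meant to exhibit.
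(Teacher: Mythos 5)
Your proposal is correct and follows essentially the same route as the paper: unwind $C=C_{12}C_{34}-C_{13}C_{24}$ in a pseudo-orthonormal frame, identify the $\mu$-part with the third-derivative terms and the $\nu$-part (after the $i\leftrightarrow j$ relabelling, which the paper leaves implicit) with the curvature terms of \eqref{EL_CF}, and conclude $C(\mu+\nu)=W_2(\varphi)-W_1(\varphi)$. No gaps.
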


\begin{proof}
From the definition of $\mu$ and $\nu$, 
we have $\mu ,\nu \in \Gamma (T^*M\otimes T^*M\otimes (T^*M\odot T^*M)\otimes \varphi ^{-1}TN)$. 
For simplicity, we set
\[
\mu _{ijkl} := \mu (e_i,e_j,e_k,e_l)
\]
and
\[
\nu _{ijkl} := \nu (e_i,e_j,e_k,e_l),  
\] 
where $\{ e_i \} _{i=1}^m$ is a local pseudo-orthonormal frame field of $M$. 
Note that, by the pseudo-Riemannian metric $g_M$,
there is a natural correspondence between a covariant tensor and a contravariant tensor on $M$.
Hence we can consider a contraction of $(0,4)$-tensor field on $M$.
Then we have
\begin{align*}
&\quad \sum _{i,j}\varepsilon _i\varepsilon _j \left\{ (\widetilde{\nabla }^3d\varphi ) (e_i,e_i,e_j,e_j) - (\widetilde{\nabla }^3d\varphi )(e_i,e_j,e_i,e_j) \right\} \\
&= \sum _{i,j} \varepsilon _i\varepsilon _j\left( \mu _{iijj}-\mu _{ijij} \right) = \sum _{i,j}\left( \mu _i\;^i\;_j\;^j - \mu _{ij} \;^{ij} \right) \\
&= C_{12}C_{34}\mu - C_{13}C_{24}\mu = \mathrm{det} \begin{pmatrix}
C_{12} & C_{13} \\
C_{24} & C_{34}
\end{pmatrix} \mu. 
\end{align*}
In a similar way, we have
\begin{align*}
&\quad \sum _{i,j}\varepsilon _i\varepsilon _j \left\{ R^N\big( (\widetilde{\nabla }d\varphi )(e_i,e_i), d\varphi (e_j) \big) d\varphi (e_j) - R^N\big( (\widetilde{\nabla }d\varphi )(e_i,e_j),d\varphi (e_i) \big) d\varphi (e_j) \right\} \\
&= \mathrm{det}\begin{pmatrix}
C_{12} & C_{13} \\
C_{24} & C_{34}
\end{pmatrix} \nu .
\end{align*}
Therefore the Euler--Lagrange equation (\ref{EL_CF}) of the Chern--Federer energy functional can be expressed as the following equation:
\[
\mathrm{det}\begin{pmatrix}
C_{12} & C_{13} \\
C_{24} & C_{34}
\end{pmatrix}(\mu + \nu )=0.
\]
\end{proof}

In addition, 
we observe symmetry of the equation (\ref{EL_CF_2}) and the Chern--Federer polynomial (\ref{eq:CF polynomial}).
Let $\mathcal{U}$ be the space of $O(p,m-p)\times O(q,n-q)$-invariant homogeneous polynomials of degree two
on $\mathrm{II}(\mathbb{E}^m_p,\mathbb{E}^n_q)$,
which is spanned by $\mathcal{Q}_1$ and $\mathcal{Q}_2$:
\[
\mathcal{U}:= \mathrm{span}_{\mathbb{R}}\left\{ \mathcal{Q}_1, \mathcal{Q}_2 \right\}.
\]
Also we denote by $\mathcal{V}$ the space of sections of $\varphi^{-1}TN$
spanned by $v_1:=C_{13}C_{24}(\mu +\nu )$ and $v_2:=C_{12}C_{34}(\mu +\nu)$:
\[
\mathcal{V}:= \mathrm{span}_{\mathbb{R}}\left\{ v_1, v_2 \right\}.
\]
Then, 
by the first variational formula of the $(\alpha \mathcal{Q}_1 + \beta \mathcal{Q}_2)$-energy functional,
we have a linear isomorphism between $\mathcal{U}$ and $\mathcal{V}$.
From the first variational formula (\ref{EL_CF_2}) of the Chern--Federer energy functional,
we observe the invariance of $v_2-v_1$ under the symmetric group $S_4$ of degree four
acting on $\mathcal{V}$ as the permutation of the variables.
The symmetric group $S_4$ is generated by transpositions $(1\ 2)$, $(1\ 3)$ and $(1\ 4)$.
Here, we set
\[
\sigma _1:=\begin{pmatrix}
1 & 2 & 3 & 4 \\
2 & 1 & 3 & 4
\end{pmatrix}, \quad \sigma _2:=\begin{pmatrix}
1 & 2 & 3 & 4 \\
3 & 2 & 1 & 4
\end{pmatrix}, \quad \sigma _3:=\begin{pmatrix}
1 & 2 & 3 & 4 \\
4 & 2 & 3 & 1
\end{pmatrix}. 
\]
By the symmetry of the third and fourth variables of $\mu$ and $\nu$, 
we have the following relations:
\[
\sigma _1(v_1) = v_1,\quad \sigma _1(v_2) = v_2,\quad \sigma _2(v_1) = v_1, \quad \sigma _2(v_2) = v_1, \quad \sigma _3(v_1) = v_2 , \quad \sigma _3(v_2) = v_1.
\]
From these, 
it can be seen that $v_1$ and $v_2$ are symmetric by the transposition $(1\ 2)=\sigma_1$, 
and $v_2-v_1$ is antisymmetric by the permutation $\sigma_3$. 
There are totally $24$ elements in $S_4$, however,
due to the invariance by the permutation $\sigma_1$ and the symmetry for the third and fourth variables of $\mu$ and $\nu$, 
the action of $S_4$ on $\mathcal{V}$ is reduced to the following six permutations:
\[
\sigma _1,\; \sigma _2,\; \sigma _3,\;  
\sigma _4:=\begin{pmatrix}
1 & 2 & 3 & 4 \\
3 & 4 & 1 & 2
\end{pmatrix},\; 
\sigma _5:=\begin{pmatrix}
1 & 2 & 3 & 4 \\
1 & 4 & 3 & 2
\end{pmatrix},\;   
\sigma _6:=\begin{pmatrix}
1 & 2 & 3 & 4 \\
1 & 3 & 2 & 4
\end{pmatrix}. 
\]
Then we can verify that $v_2-v_1$ is antisymmetric by the permutations $\sigma_3$ and $\sigma_6$.
Furthermore, 
an element of $\mathcal{V}$ is antisymmetric by $\sigma_3$ and $\sigma_6$
if and only if it is a scalar multiple of $v_2-v_1$.

In a similar way, 
we observe the invariance of the Chern--Federer polynomial under the symmetric group $S_4$. 
First, we rewrite the Chern--Federer polynomial as follows. 
For $H=(h_{ij}^\alpha)\in \mathrm{II}(\mathbb{E}^m_p,\mathbb{E}^n_q)$, 
we define $\rho \in \otimes ^4(\mathbb{E}^m_p)^*$ as follows
\[
\rho := \sum _{i,j,k,l} \rho_{ijkl}\; e^i\otimes e^j\otimes e^k\otimes e^l ,
\]
where $\rho _{ijkl}$ is defined by
\[
\rho _{ijkl} := \sum _\alpha \varepsilon'_\alpha h_{ij}^\alpha h_{kl}^\alpha
\]
and $\{e^i\}_{i=1}^m$ is the dual basis of the standard basis of $\mathbb{E}^m_p$. 
Then we have
\begin{align*}
\mathcal{Q}_1(H) = \sum _\alpha \varepsilon ' _\alpha \sum _{i,j}\varepsilon _i\varepsilon _j h_{ij}^\alpha h_{ij}^\alpha = \sum _{i,,j} \varepsilon _i \varepsilon _j \rho _{ijij} = \sum _{i,j} \rho _{ij}\; ^{ij} = C_{13}C_{24}\rho 
\end{align*}
and
\begin{align*}
\mathcal{Q}_2(H) = \sum _\alpha \varepsilon '_\alpha \sum _{i,j} \varepsilon _i\varepsilon _j h_{ii}^\alpha h_{jj}^\alpha = \sum _{i,j} \varepsilon _i\varepsilon _j\rho _{iijj} = C_{12}C_{34}\rho .
\end{align*}
Therefore we can rewrite the Chern--Federer polynomial $\mathrm{CF}(H)$ as follows: 
\[
\mathrm{CF}(H) = \mathcal{Q}_2(H) - \mathcal{Q}_1(H) = \mathrm{det} \begin{pmatrix}
C_{12} & C_{13} \\
C_{24} & C_{34} 
\end{pmatrix} \rho .
\]
As in the case of $\mathcal{V}$,
the action of $S_4$ on $\mathcal{U}$ is reduced to six elements $\sigma _i\; (i=1,2,\cdots ,6)$. 
Then we can verify that an element of $\mathcal{U}$ is antisymmetric by $\sigma_3$ and $\sigma_6$
if and only if it is a scalar multiple of the Chern--Federer polynomial $\mathrm{CF}(H)$.
Consequently, 
we find that $\mathrm{CF}(H)$ and $v_2-v_1$ have the same symmetry
via the first variational formula and the actions of $S_4$ on $\mathcal{U}$ and $\mathcal{V}$.

%%%%%%%%%%%%%%%%%%%%%%%%%%%%%%%%%%%%%%%%%%%%%%%%%%%%%%%%%
\section{Chern--Federer submanifolds in Riemannian space forms}
\label{sec:CF submanifolds}

Let $(M^{m}, g_{M}), \ (N^{n}, g_{N})$ be two Riemannian manifolds. 
From now on, 
we deal with isometric immersions $\varphi : (M^{m}, g_{M}) \rightarrow (N^{n}, g_{N})$. 
In this section, we firstly derive the Euler--Lagrange equation for an isometric immersion from a Riemannian manifold into a Riemannian space form. 
Secondly, we construct examples in the case of curves or surfaces. 
Finally, 
we consider Chern--Federer isoparametric hypersurfaces in Riemannian space forms.  

%%%%%%%%%%%%%%%%%%%%%%%%%%%%%%%%%%%%%%%%%%%%%%%%%%%%%%%%%%
\subsection{Euler--Lagrange equations for isometric immersions} 
\label{subsec:isometric immersions}

For an isometric immersion $\varphi : (M^{m}, g_{M}) \rightarrow (N^{n}, g_{N})$, 
we denote the shape operator and the mean curvature vector field 
by $A$ and $\mathcal{H}$, respectively. 
Namely, they are defined by 
\begin{equation*}
\langle A_{\xi}(X), Y \rangle 
= \langle (\widetilde{\nabla} d\varphi)(X, Y), \xi \rangle, \quad 
\mathcal{H} = \frac{1}{m} \textrm{tr}_{g_{M}} (\widetilde{\nabla} d\varphi) 
= \frac{1}{m} \tau(\varphi) 
\end{equation*}
for any $X, Y \in \Gamma(TM), \ \xi \in \Gamma(T^{\bot}M)$, 
where $T^{\bot}M$ is the normal bundle over $M$ of $\varphi$. 
In addition, 
we simply denote by $h$ the second fundamental form $\widetilde{\nabla} d\varphi$ in this section. 

We denote a Riemannian space form of constant curvature $c \in \mathbb{R}$ by $N^{n}(c)$. 
Namely, it is locally isometric to one of a Euclidean space ($c = 0$), 
a round sphere ($c > 0$) and a hyperbolic space ($c < 0$). 

When we denote the Ricci operator of $(M^{m}, g_{M})$ by $Q$, 
we obtain the Euler--Lagrange equation for an isometric immersion into a Riemannian space form. 

\begin{thm} \label{CF_isom_imm}
Let $\varphi : (M^{m}, g_{M}) \rightarrow N^{n}(c)$ be an isometric immersion. 
Then $\varphi$ is a Chern--Federer map if and only if it satisfies that 
\begin{equation}
\mathrm{CF}(\varphi) = 
-d\varphi({\mathrm{tr}}_{g_{M}} (\nabla Q)) + 2cm(m-1)\mathcal{H}
 - {\mathrm{tr}}_{g_{M}} h(Q(\textrm{-}), \textrm{-}) = 0, \label{EL_CF_submfd_1}
\end{equation}
equivalently, 
\begin{equation}
(\top) : {\mathrm{tr}}_{g_{M}} (\nabla Q) = 0, \quad 
(\bot) : 2cm(m-1)\mathcal{H} - {\mathrm{tr}}_{g_{M}} h(Q(\textrm{-}), \textrm{-}) = 0, \label{EL_CF_submfd_2}
\end{equation}
where $(\top)$ and $(\bot)$ denote the tangent component and the normal component of (\ref{EL_CF_submfd_1}), respectively. 
\end{thm}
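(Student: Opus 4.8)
The strategy is to start from the alternative expression of the Euler--Lagrange equation of the Chern--Federer energy functional given in Proposition~\ref{CFmap_eq}, and specialize every curvature term to the situation where $\varphi$ is an isometric immersion into the space form $N^n(c)$. The key inputs are: (i) the constant-curvature formula $R^N(U,V)W = c(\langle V,W\rangle U - \langle U,W\rangle V)$ for $U,V,W\in\Gamma(TN)$, which in particular gives $\nabla R^N = 0$, so the term $(\nabla R^N)(d\varphi(e_i),d\varphi(e_i),d\varphi(e_j))d\varphi(e_j)$ in (\ref{CF_eq}) vanishes identically; (ii) the fact that for an isometric immersion $\mathrm{tr}_{g_M}R^M(e_i,e_j)e_j = \sum_j \varepsilon_j R^M(e_i,e_j)e_j = Q(e_i)$, the Ricci operator applied to $e_i$; and (iii) the commuting identity $\mathrm{tr}_{g_M}(\nabla R^M)(e_i,e_i,e_j)e_j = \mathrm{tr}_{g_M}(\nabla Q)$, i.e.\ the contracted second Bianchi identity combined with a trace, which identifies the $-d\varphi((\nabla R^M)(e_i,e_i,e_j)e_j)$ term with $-d\varphi(\mathrm{tr}_{g_M}(\nabla Q))$.

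First I would substitute $\nabla R^N = 0$ into (\ref{CF_eq}) and evaluate the two remaining curvature-of-$N$ terms using the space-form formula. Writing $h = \widetilde\nabla d\varphi$, one gets $R^N(h(e_i,e_i),d\varphi(e_j))d\varphi(e_j) = c(\langle d\varphi(e_j),d\varphi(e_j)\rangle h(e_i,e_i) - \langle h(e_i,e_i),d\varphi(e_j)\rangle d\varphi(e_j))$; since $h$ takes values in the normal bundle, the second inner product vanishes, and summing over $i,j$ with the $\varepsilon$'s and using $\sum_j\varepsilon_j\langle d\varphi(e_j),d\varphi(e_j)\rangle = m$ leaves $cm\,\tau(\varphi) = cm^2\mathcal{H}$. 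Similarly $R^N(d\varphi(e_i),h(e_i,e_j))d\varphi(e_j) = c(\langle h(e_i,e_j),d\varphi(e_j)\rangle d\varphi(e_i) - \langle d\varphi(e_i),d\varphi(e_j)\rangle h(e_i,e_j))$; the first inner product vanishes (normality of $h$) and the second contracts with $\varepsilon_i\delta_{ij}$, so $\sum_{i,j}\varepsilon_i\varepsilon_j R^N(d\varphi(e_i),h(e_i,e_j))d\varphi(e_j) = -c\sum_i\varepsilon_i h(e_i,e_i) = -cm\mathcal{H}$. Combining the coefficients $2$ and $2$ from (\ref{CF_eq}) gives $2cm^2\mathcal{H} - 2cm\mathcal{H} = 2cm(m-1)\mathcal{H}$, which is exactly the curvature contribution in (\ref{EL_CF_submfd_1}).

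Next I would handle the two terms involving the geometry of $M$. For the term $-\sum_{i,j}\varepsilon_i\varepsilon_j\, h(e_i, R^M(e_i,e_j)e_j)$: fixing $i$ and tracing over $j$ turns $\sum_j\varepsilon_j R^M(e_i,e_j)e_j$ into $Q(e_i)$ (with the sign convention of the Ricci operator used in the paper — this is where I would be careful to match conventions), so this term becomes $-\sum_i\varepsilon_i h(e_i, Q(e_i)) = -\mathrm{tr}_{g_M} h(Q(\textrm{-}),\textrm{-})$. For the term $-d\varphi(\sum_{i,j}\varepsilon_i\varepsilon_j (\nabla R^M)(e_i,e_i,e_j)e_j)$: I would trace first over $j$ to get $(\nabla Q)(e_i)$ in the appropriate slot, then over $i$, yielding $-d\varphi(\mathrm{tr}_{g_M}(\nabla Q))$; here the interchange of ``take covariant derivative'' and ``trace'' is legitimate because $\nabla$ is metric and the trace is with respect to $g_M$. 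Assembling all four pieces reproduces (\ref{EL_CF_submfd_1}). Finally, for the equivalence with (\ref{EL_CF_submfd_2}), I would observe that $d\varphi(\mathrm{tr}_{g_M}(\nabla Q))$ is tangent to $\varphi(M)$ while $2cm(m-1)\mathcal{H}$ and $\mathrm{tr}_{g_M}h(Q(\textrm{-}),\textrm{-})$ are normal (the mean curvature vector and $h$ are normal-bundle valued), so the single vector equation (\ref{EL_CF_submfd_1}) splits orthogonally into its $(\top)$ and $(\bot)$ parts.

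\textbf{Main obstacle.} The routine vector-bundle bookkeeping is long but mechanical; the genuine care-points are (a) getting all sign conventions consistent — the sign in $R^M(e_i,e_j)e_j$ versus $Q(e_i)$, the sign of $\mathcal{H}$ relative to $\tau(\varphi)$, and the overall sign of $\mathrm{CF} = \mathcal{Q}_2 - \mathcal{Q}_1$ — and (b) justifying that tracing commutes with $\nabla$ in the $(\nabla R^M)$ term and that the contracted Bianchi identity is being used in the correct form, i.e.\ that $\sum_{i,j}\varepsilon_i\varepsilon_j(\nabla R^M)(e_i,e_i,e_j)e_j$ really equals $\mathrm{tr}_{g_M}(\nabla Q)$ and not, say, $\tfrac12 d(\mathrm{scal})^\sharp$ or some other contraction. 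I expect step (b) to be the place where an error would most easily creep in, so I would verify it by writing the Ricci operator as $Q(X) = \sum_j\varepsilon_j R^M(X,e_j)e_j$ in a geodesic frame and differentiating directly, rather than quoting a named identity.
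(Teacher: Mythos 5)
Your proposal is correct and follows essentially the same route as the paper: the authors likewise specialize the second-order Euler--Lagrange equation of Proposition~\ref{CFmap_eq} to the space-form target, using $\nabla R^{N}=0$ and the constant-curvature formula to obtain the $2c(m-1)\tau(\varphi)=2cm(m-1)\mathcal{H}$ term, identifying the traced $R^{M}$ and $\nabla R^{M}$ terms with $Q$ and $\mathrm{tr}_{g_M}(\nabla Q)$, and then splitting into tangential and normal components. Your more detailed verification of the curvature contractions and of the identity $\sum_{i,j}(\nabla R^{M})(e_i,e_i,e_j)e_j=\mathrm{tr}_{g_M}(\nabla Q)$ simply fills in steps the paper leaves implicit.
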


\begin{rem} 
We define two $(1,1)$-type tensor fields $A^{C}$ and $\Xi$ on $M^{m}$ as 
\begin{equation*}
A^{C}(X) := \sum_{\alpha=1}^{k} A_{\xi_{\alpha}}^{2} (X), \quad 
\Xi(X) := A_{\tau(\varphi)}(X) - A^{C}(X) = m A_{\mathcal{H}}(X) - A^{C}(X), 
\end{equation*}
where $k = n-m$ and $\{ \xi_{\alpha} \}_{\alpha = 1}^{k}$ is a local orthonormal frame of $T^{\bot}M$. 
The operator $A^{C}$ is called the \textit{Casorati operator} (cf.~\cite{Chen3, Chen4}). 
Then, from the Gauss equation, we have
\begin{equation*}
Q(X) = c(m-1)X + \Xi(X). 
\end{equation*}
From this, we can also describe the formula (\ref{EL_CF_submfd_2}) as 
\begin{equation}
(\top) : {\mathrm{tr}}_{g_{M}} (\nabla \Xi) = 0, \quad 
(\bot) : cm(m-1)\mathcal{H} - {\mathrm{tr}}_{g_{M}} h(\Xi(\textrm{-}), \textrm{-}) = 0. \label{EL_CF_submfd_3}
\end{equation}
\end{rem} 

\subsubsection*{Proof of Theorem~\ref{CF_isom_imm}} 
Since the target space $N^{n}$ is of constant curvature $c$ and $\varphi^{\ast} g_{N} = g_{M}$, 
by using Lemma~\ref{CFmap_eq}, we compute 
\begin{align*}
&\sum_{i,j=1}^{m} \left\{ 
\left( \nabla R^{N} \right)(d\varphi(e_{i}), d\varphi(e_{i}), d\varphi(e_{j}), d\varphi(e_{j})) 
- d\varphi((\nabla R^{M})(e_{i}, e_{i}, e_{j}, e_{j})) 
\right. \\ 
& 
\left. \quad - h(e_{i}, R^{M}(e_{i}, e_{j})e_{j}) + 2R^{N}(h(e_{i}, e_{i}), d\varphi(e_{j}))d\varphi(e_{j}) 
+ 2R^{N}(d\varphi(e_{i}), h(e_{i}, e_{j}))d\varphi(e_{j}) 
\right\} \\
&= -d\varphi(\textrm{tr}_{g_{M}} (\nabla Q)) 
- \textrm{tr}_{g_{M}} h(Q(\textrm{-}), \textrm{-})
+ 2c(m-1)\tau(\varphi). 
\end{align*}
Therefore, the proof is completed since $\tau(\varphi) = m\mathcal{H}$. \qed

%%%%%%%%%%%%%%%%%%%%%%%%%%%%%%%%%%%%%%%%%%%%%%%%%%%%%%%%%%%%%%%%%
\subsection{Examples of Chern--Federer submanifolds}
\label{subsec:examples}

Here, we construct some examples of Chern--Federer maps in the case of isometric immersions. 
When an isometric immersion $\varphi : (M^{m}, g_{M}) \rightarrow (N^{n}, g_{N})$ is a Chern--Federer map, 
we call the image a \textit{Chern--Federer submanifold} in $(N^{n}, g_{N})$, 
and the map $\varphi$ to be \textit{Chern--Federer}. 

Let $I \subset \mathbb{R}$ be an open interval. 
Then an arbitrary curve $\gamma : I \rightarrow (N^{n}, g_{N})$ 
is a Chern--Federer map. 
Actually, we have $W_{1}(\gamma) = W_{2}(\gamma)$ 
from Theorem~\ref{thmQ_1} and \ref{thmQ_2}. 
Therefore, it is trivial that 
\begin{equation*}
\mathrm{CF}(\gamma) = W_{2}(\gamma) - W_{1}(\gamma) = 0. 
\end{equation*}

There are other obvious examples in the following way.
We consider a Euclidean $n$-space $\mathbb{E}^{n}$ 
as a target space $(N^{n}, g_{M})$, which is a flat Riemannian space form. 
If $(M^{m}, g_{M})$ is a Ricci-flat Riemannian manifold, then an arbitrary isometric immersion $\varphi : (M^{m}, g_{M}) \rightarrow \mathbb{E}^{n}$ is Chern--Federer. 
For example, Calabi--Yau manifolds, Hyperk$\ddot{\textrm{a}}$hler manifolds and $G_{2}$-manifolds are all Ricci-flat. 
Moreover, for any Riemannian manifold $(M^{m}, g_{M})$, there exists an isometric immersion into a Euclidean space by Nash's theorem. 

Next, we consider the two-dimensional case ($m=2$). 
\begin{prop} 
Let $\varphi : (M^{2}, g_{M}) \rightarrow N^{n}(c)$ be an isometric immersion and $K$ the sectional curvature of $(M^{2}, g_{M})$. 
Then $\varphi$ is Chern--Federer if and only if 
\begin{itemize}
\item[(i)] $K$ is constant and $\varphi$ is minimal, or
\item[(ii)] $K = 2c$ and $\varphi$ is arbitrary, that is, unconditional on $\varphi$. 
\end{itemize}
\end{prop}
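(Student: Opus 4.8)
The plan is to invoke Theorem~\ref{CF_isom_imm} and exploit that in dimension $m=2$ the Ricci operator is a pointwise scalar multiple of the identity. First I would recall that on a surface $(M^2,g_M)$ one has $\mathrm{Ric}=K\,g_M$, hence $Q=K\cdot\mathrm{id}_{TM}$, where $K$ denotes the sectional (Gaussian) curvature regarded as a function on $M$. Differentiating gives $(\nabla_X Q)(Y)=(XK)\,Y$, so that $\mathrm{tr}_{g_M}(\nabla Q)=\mathrm{grad}\,K$; therefore the tangential equation $(\top)$ in (\ref{EL_CF_submfd_2}) is equivalent to $K$ being constant on $M$.

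Next I would simplify the normal equation $(\bot)$ in (\ref{EL_CF_submfd_2}) for $m=2$. Using $Q=K\cdot\mathrm{id}$ we get $\mathrm{tr}_{g_M}h(Q(\textrm{-}),\textrm{-})=K\,\mathrm{tr}_{g_M}h=K\,\tau(\varphi)=2K\mathcal{H}$, while $2cm(m-1)\mathcal{H}=4c\mathcal{H}$; hence $(\bot)$ becomes $(2c-K)\mathcal{H}=0$. So, by Theorem~\ref{CF_isom_imm}, $\varphi$ is Chern--Federer if and only if $K$ is constant on $M$ and $(2c-K)\mathcal{H}\equiv 0$.

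It then remains to carry out an elementary case distinction. If $K\equiv 2c$, then $K$ is constant and $(2c-K)\mathcal{H}\equiv 0$ holds automatically for any $\varphi$, which is case~(ii). If $K\not\equiv 2c$, then since $K$ is forced to be constant, $2c-K$ is a nonzero constant, so $(2c-K)\mathcal{H}\equiv 0$ forces $\mathcal{H}\equiv 0$, i.e.\ $\varphi$ is minimal, with $K$ constant; this is case~(i). The converse is immediate, since each of (i) and (ii) makes both $(\top)$ and $(\bot)$ hold. Note that (i) and (ii) overlap --- a minimal surface with $K=2c$ satisfies both --- so the statement is a genuine disjunction.

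The whole argument is routine once Theorem~\ref{CF_isom_imm} is available; the only mild subtleties are keeping track of the dimension-dependent constants in (\ref{EL_CF_submfd_2}) and noticing that ``$K$ constant'' must be established from $(\top)$ before one is entitled to divide by $2c-K$ in $(\bot)$. I do not expect any real obstacle here.
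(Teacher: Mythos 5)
Your proposal is correct and follows essentially the same route as the paper: both reduce to Theorem~\ref{CF_isom_imm}, use $Q = K\cdot\mathrm{id}$ in dimension two so that $(\top)$ becomes $\mathrm{grad}\,K=0$ and $(\bot)$ becomes $2(2c-K)\mathcal{H}=0$, and then conclude by the obvious case distinction.
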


\begin{proof}
In the two-dimensional case, we have, for any $X \in \Gamma(TM)$, 
\begin{equation*}
Q(X) = K X.
\end{equation*}
Thus, since $\varphi$ is Chern--Federer if and only if 
\begin{align*}
(\top) \ &: \ \textrm{tr}_{g_{M}} (\nabla Q) = \textrm{grad} \, K = 0, \\
(\bot) \ &: \ 4c\mathcal{H} - K \textrm{tr}_{g_{M}} h(\textrm{-}, \textrm{-}) 
= 2(2c - K) \mathcal{H} = 0,
\end{align*}
we have the conclusion. 
\end{proof}

Let $M^{2}(K)$ be a two-dimensional Riemannian space form of constant curvature $K$. 
For minimal isometric immersions $\varphi : M^{2}(K) \rightarrow N^{n}(c)$, 
the research has already completed. 
In fact, 
\begin{itemize}
\item when $c = 0$, it implies that $K = 0$ and $\varphi$ is totally geodesic; 
\item when $c = -1$, it implies that $K = -1$ and $\varphi$ is totally geodesic; 
\item when $c = 1$, it implies that $K \geq 0$. 
In addition, if $N^{n}(1)$ is isometric to a round sphere 
\begin{equation*}
\mathbb{S}^{n}(1) := \{(x_{1}, \cdots, x_{n+1}) \in \mathbb{E}^{n+1} \mid x_{1}^{2} + \cdots + x_{n+1}^{2} = 1 \},
\end{equation*}
then $\varphi$ is locally congruent to generalized Clifford tori, or 
Bor$\mathring{\textrm{u}}$vka spheres $\psi_{k} \ (k \geq 1)$, which are nothing but the standard minimal immersions of two-dimensional spheres. 
See \cite{Bry85, Kem76} in detail. 
\end{itemize}

At the end of Section~\ref{subsec:examples}, 
we consider flat tori in the unit $3$-sphere $\mathbb{S}^{3}(1)$. 

Let $T^{2}$ be a flat torus, $\varphi : T^{2} \rightarrow \mathbb{S}^{3}(1)$ an isometric immersion. 
Then the flat torus $T^{2}$ admits an asymptotic Chebyshev net $(s_{1}, s_{2})$, 
that is, by using the asymptotic Chebyshev net $(s_{1}, s_{2})$, we can express 
\begin{equation*}
g_{T} = ds_{1}^{2} + 2\cos{\omega} \, ds_{1}ds_{2} + ds_{2}^{2}, 
\quad 
h_{T} = 2\sin{\omega} \, ds_{1}ds_{2}, 
\end{equation*}
where $\omega = \omega(s_{1}, s_{2})$ is some smooth function, 
and $g_{T}, h_{T}$ are the induced metric and the second fundamental form 
of $\varphi$, respectively. 
Moreover, we compute the mean curvature function $\mathcal{H}$ of $\varphi$ from this as 
\begin{equation*}
\mathcal{H}(s_{1}, s_{2}) = -\cot{[\omega(s_{1}, s_{2})]}. 
\end{equation*}
See \cite{Ki1} in more precise details regarding an asymptotic Chebyshev net of a flat torus. 

\begin{thm} \label{flat_tori}
Let $T^{2}$ be a flat torus, $\varphi : T^{2} \rightarrow \mathbb{S}^{3}(1)$ an isometric immersion with constant mean curvature $\mathcal{H}$. 
Then $\varphi$ is an $(\alpha \mathcal{Q}_{1} + \beta \mathcal{Q}_{2})$-map 
if and only if 
\begin{itemize}
\item[(\textrm{i})]  $\mathcal{H} = 0$ \quad $(\textrm{when} \ \alpha + \beta = 0)$, 
\item[(\textrm{ii})] $\mathcal{H} = 0$ \quad 
$\left(\textrm{when} \ \alpha + \beta \neq 0, \ 
\dfrac{\alpha}{\alpha + \beta} \geq 0 \right)$, 
\item[(\textrm{iii})] $\mathcal{H} = 0$, or 
$\mathcal{H}^{2} = -\dfrac{\alpha}{2(\alpha + \beta)}$ \quad 
$\left(\textrm{when} \ \alpha + \beta \neq 0, \ 
\dfrac{\alpha}{\alpha + \beta} < 0 \right)$. 
\end{itemize}
Moreover, in the case of (iii), 
$\mathcal{H}^{2}$ runs across the whole range of $(0, \infty)$. 
\end{thm}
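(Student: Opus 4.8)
The plan is to reduce the defining equation $\alpha W_{1}(\varphi)+\beta W_{2}(\varphi)=0$ of an $(\alpha\mathcal{Q}_{1}+\beta\mathcal{Q}_{2})$-map to a single scalar equation in the constant mean curvature $\mathcal{H}$, by combining the Chern--Federer equation of Theorem~\ref{CF_isom_imm} with the biharmonic map equation. First I would rewrite, using $\mathrm{CF}(\varphi)=W_{2}(\varphi)-W_{1}(\varphi)$ from (\ref{EL_CF}),
\[
\alpha W_{1}(\varphi)+\beta W_{2}(\varphi)=(\alpha+\beta)\,W_{2}(\varphi)-\alpha\,\mathrm{CF}(\varphi).
\]
Because of the Proposition closing Section~\ref{sec:first variational formulae}, $W_{2}(\varphi)$ coincides (in the Riemannian case) with the bitension field $\tau_{2}(\varphi)$ of Jiang. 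Hence $\varphi$ is an $(\alpha\mathcal{Q}_{1}+\beta\mathcal{Q}_{2})$-map if and only if $(\alpha+\beta)\,\tau_{2}(\varphi)=\alpha\,\mathrm{CF}(\varphi)$.

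Next I would specialize to $\varphi:T^{2}\to\mathbb{S}^{3}(1)$ with $\mathcal{H}$ constant. Since $T^{2}$ is flat its Ricci operator $Q$ vanishes, so Theorem~\ref{CF_isom_imm} with $c=1$, $m=2$ gives $\mathrm{CF}(\varphi)=2cm(m-1)\mathcal{H}=4\mathcal{H}\xi$, where $\xi$ is a unit normal and $\mathcal{H}$ now denotes the (constant) scalar mean curvature. For $\tau_{2}(\varphi)$ I would invoke the standard normal/tangential splitting of the bitension field of a hypersurface in a space form; since $\mathrm{grad}\,\mathcal{H}=0$ the tangential component vanishes, and for $m=2$, $c=1$ the normal component is $\tau_{2}(\varphi)=2\mathcal{H}(2-|A|^{2})\,\xi$, with $A$ the shape operator. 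The Gauss equation for the flat surface $T^{2}\subset\mathbb{S}^{3}(1)$ forces $\det A=k_{1}k_{2}=-1$, while $\mathrm{tr}\,A=2\mathcal{H}$, so $|A|^{2}=k_{1}^{2}+k_{2}^{2}=(k_{1}+k_{2})^{2}-2k_{1}k_{2}=4\mathcal{H}^{2}+2$ and $2-|A|^{2}=-4\mathcal{H}^{2}$. Substituting into $(\alpha+\beta)\,\tau_{2}(\varphi)=\alpha\,\mathrm{CF}(\varphi)$ and cancelling $\xi$ yields
\[
4\mathcal{H}\bigl(2(\alpha+\beta)\mathcal{H}^{2}+\alpha\bigr)=0.
\]

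The conclusion then follows by elementary case analysis: either $\mathcal{H}=0$, or $2(\alpha+\beta)\mathcal{H}^{2}+\alpha=0$. If $\alpha+\beta=0$ the latter forces $\alpha=\beta=0$, which is excluded, so only $\mathcal{H}=0$ remains, which is (i). If $\alpha+\beta\neq0$ the latter reads $\mathcal{H}^{2}=-\dfrac{\alpha}{2(\alpha+\beta)}$, a number which is $\leq 0$ precisely when $\dfrac{\alpha}{\alpha+\beta}\geq0$; in that subcase the only admissible value is $\mathcal{H}=0$, which is (ii), whereas when $\dfrac{\alpha}{\alpha+\beta}<0$ both $\mathcal{H}=0$ and $\mathcal{H}^{2}=-\dfrac{\alpha}{2(\alpha+\beta)}>0$ occur, which is (iii). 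For the final claim, writing $t=\dfrac{\alpha}{\alpha+\beta}$, the parameter region of (iii) is exactly $t\in(-\infty,0)$, on which $-t/2$ sweeps all of $(0,\infty)$; and every such value of $\mathcal{H}^{2}$ is attained by a flat Clifford-type torus $\mathbb{S}^{1}(a)\times\mathbb{S}^{1}(\sqrt{1-a^{2}})\subset\mathbb{S}^{3}(1)$, whose constant mean curvature $\frac{1}{2}\bigl(\tfrac{\sqrt{1-a^{2}}}{a}-\tfrac{a}{\sqrt{1-a^{2}}}\bigr)$ ranges over $\mathbb{R}$ as $a\in(0,1)$.

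I expect the main obstacle to be the explicit evaluation of $\tau_{2}(\varphi)$, equivalently of $W_{1}(\varphi)$ and $W_{2}(\varphi)$, for the immersed surface: one must either carefully specialize the bitension-field formula to a constant-mean-curvature hypersurface in $\mathbb{S}^{3}(1)$, or re-derive it from Lemma~\ref{4th_fundamental} and Proposition~\ref{CFmap_eq}, taking care with the $\varphi^{-1}TN$-valued terms and with the sign conventions fixed in Sections~\ref{sec:first variational formulae} and \ref{sec:CF maps}. Once that normal-component formula and the Gauss relation $|A|^{2}=4\mathcal{H}^{2}+2$ are in hand, everything else is the routine algebra displayed above.
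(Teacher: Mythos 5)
Your proposal is correct, and it reaches exactly the paper's scalar equation $-4\mathcal{H}\{\alpha+2(\alpha+\beta)\mathcal{H}^{2}\}=0$, but by a genuinely different route. The paper works entirely with an explicit adapted frame: it takes an asymptotic Chebyshev net $(s_{1},s_{2})$ on the flat torus, builds the geodesic frame $e_{1}=\partial/\partial s_{1}$, $e_{2}=\mathcal{H}\,\partial/\partial s_{1}+\sqrt{1+\mathcal{H}^{2}}\,\partial/\partial s_{2}$, and computes $W_{1}(\varphi)=-4\mathcal{H}(1+2\mathcal{H}^{2})\xi$ and $W_{2}(\varphi)=-8\mathcal{H}^{3}\xi$ directly from the definitions. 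You instead write $\alpha W_{1}+\beta W_{2}=(\alpha+\beta)W_{2}-\alpha\,\mathrm{CF}(\varphi)$ and evaluate the two pieces structurally: $\mathrm{CF}(\varphi)=4\mathcal{H}\xi$ from Theorem~\ref{CF_isom_imm} with $Q=0$, and $W_{2}(\varphi)=2\mathcal{H}(2-|A|^{2})\xi=-8\mathcal{H}^{3}\xi$ from the hypersurface bitension formula together with the Gauss relation $\det A=-1$, $|A|^{2}=4\mathcal{H}^{2}+2$. Both computations agree (indeed $W_{2}-W_{1}=-8\mathcal{H}^{3}+4\mathcal{H}(1+2\mathcal{H}^{2})=4\mathcal{H}$, consistent with your $\mathrm{CF}$ value), and the case analysis and the surjectivity of $\mathcal{H}^{2}\mapsto-\alpha/2(\alpha+\beta)$ onto $(0,\infty)$ are identical. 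What your approach buys is economy and transparency: it recycles the paper's own Theorem~\ref{CF_isom_imm}, needs no Chebyshev net, and works verbatim for any flat CMC surface in $\mathbb{S}^{3}(1)$. What it costs is one external ingredient, the normal-component formula for the bitension field of a CMC hypersurface (equivalently the identification of $W_{2}$ with the bitension field via the Proposition at the end of Section~\ref{sec:first variational formulae}); this is standard and you correctly flag it as the only step requiring care, but if you want the argument to be self-contained within the paper you should either verify $W_{2}=2\mathcal{H}(2-|A|^{2})\xi$ from that Proposition or fall back on the paper's direct frame computation.
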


In \cite{Ki2}, Kitagawa showed that any isometric embedding $\varphi : T^{2} \rightarrow \mathbb{S}^{3}(1)$ with constant mean curvature are congruent to Clifford tori. 
Therefore, we have the following classification theorem. 

\begin{cor} 
Let $T^{2}$ be a flat torus, $\varphi : T^{2} \rightarrow \mathbb{S}^{3}(1)$ an isometric embedding with constant mean curvature $\mathcal{H}$. 
Then $\varphi$ is an $(\alpha \mathcal{Q}_{1} + \beta \mathcal{Q}_{2})$-map 
if and only if it is congruent to one of the following Clifford tori 
\begin{itemize}
\item[(i)] a minimal Clifford torus defined by 
\begin{equation*}
\mathbb{S}^{1}\left(\frac{1}{\sqrt{2}} \right) \times \mathbb{S}^{1}\left(\frac{1}{\sqrt{2}} \right) \hookrightarrow \mathbb{S}^{3}(1) \quad (\textrm{when} \ \alpha + \beta = 0), 
\end{equation*}
\item[(ii)] a minimal Clifford torus defined by 
\begin{equation*}
\mathbb{S}^{1}\left(\frac{1}{\sqrt{2}} \right) \times \mathbb{S}^{1}\left(\frac{1}{\sqrt{2}} \right) \hookrightarrow \mathbb{S}^{3}(1) \quad \left(\textrm{when} \ 
\alpha + \beta \neq 0, \ \frac{\alpha}{\alpha + \beta} \geq 0 \right),
\end{equation*}
\item[(iii)] a minimal Clifford torus defined by 
\begin{equation*}
\mathbb{S}^{1}\left(\frac{1}{\sqrt{2}} \right) \times \mathbb{S}^{1}\left(\frac{1}{\sqrt{2}} \right) \hookrightarrow \mathbb{S}^{3}(1), 
\end{equation*}
or a non-minimal Clifford torus defined by 
\begin{equation*}
\mathbb{S}^{1}(r_{1}) \times \mathbb{S}^{1}(r_{2}) \hookrightarrow \mathbb{S}^{3}(1) \quad \left(\textrm{when} \ \alpha + \beta \neq 0, \ \frac{\alpha}{\alpha + \beta} < 0 \right), 
\end{equation*}
where $r_{1}, r_{2}$ are defined by 
\begin{align*}
r_{1} &= \frac{1}{2}\left[\sqrt{1 + \sqrt{\frac{2(\alpha + \beta)}{\alpha + 2\beta}}} - \sqrt{1 - \sqrt{\frac{2(\alpha + \beta)}{\alpha + 2\beta}}} \ \right], \\ 
r_{2} &= \frac{1}{2}\left[\sqrt{1 + \sqrt{\frac{2(\alpha + \beta)}{\alpha + 2\beta}}} + \sqrt{1 - \sqrt{\frac{2(\alpha + \beta)}{\alpha + 2\beta}}} \ \right], 
\end{align*}
and the mean curvature of the Clifford torus $\mathbb{S}^{1}(r_{1}) \times \mathbb{S}^{1}(r_{2}) \hookrightarrow \mathbb{S}^{3}(1)$ satisfies that 
\begin{equation*}
\mathcal{H}^{2} = -\dfrac{\alpha}{2(\alpha + \beta)}. 
\end{equation*}
\end{itemize}
\end{cor}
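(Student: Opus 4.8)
The plan is to evaluate the Euler--Lagrange operator $\alpha W_1(\varphi)+\beta W_2(\varphi)$ explicitly for a constant-mean-curvature flat torus in $\mathbb{S}^{3}(1)$, reduce it to a cubic in the mean curvature function times the unit normal, and read off the three cases by factoring. Write $\xi$ for a global unit normal of $\varphi$ (which exists since $T^{2}$ and $\mathbb{S}^{3}$ are orientable, so the normal line bundle is trivial) and, abusing notation, $\mathcal{H}$ for both the mean curvature function and the mean curvature vector field $\mathcal{H}\xi$. The first step is the algebraic identity $\alpha W_1(\varphi)+\beta W_2(\varphi)=(\alpha+\beta)\,W_2(\varphi)-\alpha\,\mathrm{CF}(\varphi)$, valid since $\mathrm{CF}(\varphi)=W_2(\varphi)-W_1(\varphi)$ by the definition \eqref{eq:CF polynomial} of the Chern--Federer polynomial. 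Because $T^{2}$ is flat its Ricci operator $Q$ vanishes identically, so $\nabla Q=0$ and $h(Q(\textrm{-}),\textrm{-})=0$; feeding this into Theorem~\ref{CF_isom_imm} with $m=2$, $c=1$ gives $\mathrm{CF}(\varphi)=2cm(m-1)\mathcal{H}=4\mathcal{H}\xi$.

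It remains to compute $W_2(\varphi)$. Combining the Proposition on the rough Laplacian with Theorem~\ref{thmQ_2} identifies $W_2(\varphi)$ with the bitension field $-\overline{\nabla}^{*}\overline{\nabla}\tau(\varphi)-\mathrm{tr}_{g_{M}}R^{N}(d\varphi(\textrm{-}),\tau(\varphi))d\varphi(\textrm{-})$. For a hypersurface of constant mean curvature in a space form, the Codazzi equation and $\mathrm{grad}\,\mathcal{H}=0$ annihilate the tangential part, and a short computation with the Weingarten formula and $R^{N}(X,Y)Z=c(\langle Y,Z\rangle X-\langle X,Z\rangle Y)$ yields $W_2(\varphi)=-m\mathcal{H}(|A|^{2}-cm)\xi$. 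The Gauss equation for a flat surface in $\mathbb{S}^{3}(1)$ forces $\det A=-1$, hence $|A|^{2}=(\mathrm{tr}\,A)^{2}-2\det A=4\mathcal{H}^{2}+2$ and $|A|^{2}-cm=4\mathcal{H}^{2}$, so $W_2(\varphi)=-8\mathcal{H}^{3}\xi$. Substituting into the identity above, $\alpha W_1(\varphi)+\beta W_2(\varphi)=-4\mathcal{H}\big(2(\alpha+\beta)\mathcal{H}^{2}+\alpha\big)\xi$, whence $\varphi$ is an $(\alpha\mathcal{Q}_1+\beta\mathcal{Q}_2)$-map if and only if $\mathcal{H}=0$ or $2(\alpha+\beta)\mathcal{H}^{2}+\alpha=0$. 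The trichotomy follows by cases: if $\alpha+\beta=0$ then $\alpha\neq0$ and the second equation is impossible, so only $\mathcal{H}=0$; if $\alpha+\beta\neq0$ with $\alpha/(\alpha+\beta)\geq0$ then $\mathcal{H}^{2}=-\frac{\alpha}{2(\alpha+\beta)}\leq0$ has no nonzero solution, so again only $\mathcal{H}=0$; if $\alpha/(\alpha+\beta)<0$ we additionally get $\mathcal{H}^{2}=-\frac{\alpha}{2(\alpha+\beta)}>0$. For the last assertion, as $(\alpha,\beta)$ ranges over the regime of case (iii) the value $-\frac{\alpha}{2(\alpha+\beta)}$ covers all of $(0,\infty)$ (given $r>0$, take $\beta=1$, $\alpha=-2r/(1+2r)$), and each such value is realized by a Clifford torus $\mathbb{S}^{1}(r_{1})\times\mathbb{S}^{1}(r_{2})\subset\mathbb{S}^{3}(1)$, whose constant mean curvature $\tfrac{r_{2}^{2}-r_{1}^{2}}{2r_{1}r_{2}}$ takes every real value.

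The main obstacle is the explicit evaluation $W_2(\varphi)=-8\mathcal{H}^{3}\xi$ with the correct constant, since the remainder is bookkeeping on top of Theorem~\ref{CF_isom_imm}; one must track the normalization so that the cubic factor comes out as exactly $2(\alpha+\beta)\mathcal{H}^{2}+\alpha$. As an alternative to the hands-on computation from Theorem~\ref{thmQ_2}, one may quote the known biharmonic equation for constant-mean-curvature hypersurfaces in space forms and combine it with the Gauss equation, but the bookkeeping of constants remains the delicate point.
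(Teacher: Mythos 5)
Your computation is correct and lands on exactly the paper's key identity $\alpha W_{1}(\varphi)+\beta W_{2}(\varphi)=-4\mathcal{H}\{\alpha+2(\alpha+\beta)\mathcal{H}^{2}\}\xi$, but you reach it by a genuinely different route. The paper proves the underlying Theorem~\ref{flat_tori} by introducing an asymptotic Chebyshev net $(s_{1},s_{2})$ on the flat torus, building the explicit frame $e_{1}=\partial/\partial s_{1}$, $e_{2}=\mathcal{H}\,\partial/\partial s_{1}+\sqrt{1+\mathcal{H}^{2}}\,\partial/\partial s_{2}$, and computing $W_{1}(\varphi)=-4\mathcal{H}(1+2\mathcal{H}^{2})\xi$ and $W_{2}(\varphi)=-8\mathcal{H}^{3}\xi$ directly in that frame. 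You instead write $\alpha W_{1}+\beta W_{2}=(\alpha+\beta)W_{2}-\alpha\,\mathrm{CF}$, obtain $\mathrm{CF}(\varphi)=4\mathcal{H}\xi$ essentially for free from Theorem~\ref{CF_isom_imm} because flatness kills the Ricci operator, and evaluate $W_{2}$ by identifying it with the bitension field and using the constant-mean-curvature hypersurface formula $W_{2}=-m\mathcal{H}(|A|^{2}-cm)\xi$ together with $\det A=-1$ from the Gauss equation. This bypasses local coordinates altogether and is arguably more conceptual; the two computations also cross-check each other, since $W_{2}-W_{1}=-8\mathcal{H}^{3}+4\mathcal{H}(1+2\mathcal{H}^{2})=4\mathcal{H}$, consistent with your value of $\mathrm{CF}(\varphi)$.

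The one genuine omission is the bridge from the mean-curvature condition to the classification asserted in the corollary. You show that a constant-mean-curvature flat torus is an $(\alpha\mathcal{Q}_{1}+\beta\mathcal{Q}_{2})$-map if and only if $\mathcal{H}=0$ or $\mathcal{H}^{2}=-\alpha/(2(\alpha+\beta))$, and that every admissible value of $\mathcal{H}^{2}$ is realized by some Clifford torus; but the corollary asserts that the map must be \emph{congruent} to one of the listed Clifford tori. That step is not automatic: it is Kitagawa's rigidity theorem \cite{Ki2} (quoted in the paper immediately before the corollary) that every isometric \emph{embedding} of a flat torus into $\mathbb{S}^{3}(1)$ with constant mean curvature is congruent to a Clifford torus. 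Without invoking it, your argument classifies the possible values of $\mathcal{H}$ but not the maps themselves. It would also be worth one line to note that $r_{1},r_{2}$ in the statement are determined (up to swapping) by $r_{1}^{2}+r_{2}^{2}=1$ together with $\mathcal{H}^{2}=-\alpha/(2(\alpha+\beta))$, which follows at once from your formula $\mathcal{H}=(r_{2}^{2}-r_{1}^{2})/(2r_{1}r_{2})$.
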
 

\subsubsection*{Proof of Theorem~\ref{flat_tori}}
Let $(s_{1}, s_{2})$ be an asymptotic Chebyshev net for $T^{2}$. 
We define a frame field by using this coordinates 
\begin{equation*}
e_{1} = \frac{\partial}{\partial s_{1}}, \quad 
e_{2} = \mathcal{H}\dfrac{\partial}{\partial s_{1}} 
+ \sqrt{1 + \mathcal{H}^{2}}\dfrac{\partial}{\partial s_{2}}.
\end{equation*}
Then $\{e_{1}, e_{2}\}$ defines a geodesic frame. 
By using this, we compute 
\begin{equation*}
W_{1}(\varphi) 
= -4\mathcal{H}(1 + 2\mathcal{H}^{2}) \xi, \quad W_{2}(\varphi) 
= -8\mathcal{H}^{3} \xi, 
\end{equation*}
where $\xi$ is a unit normal vector along $\varphi$. 
Namely, we have 
\begin{equation*}
\alpha W_{1}(\varphi) + \beta W_{2}(\varphi) 
= -4\mathcal{H}\{\alpha + 2(\alpha + \beta)\mathcal{H}^{2} \} \xi. 
\end{equation*}
This completes the proof. 
\qed

\begin{rem} 
Regarding the following hypersurfaces in unit spheres 
\begin{itemize}
\item $\mathbb{S}^{m}\left(\dfrac{1}{\sqrt{2}}\right) \subset \mathbb{S}^{m+1}(1)$ 
(a totally umbilical small sphere), 
\item $\mathbb{S}^{m}\left(\dfrac{1}{\sqrt{2}}\right) \times \mathbb{S}^{m}\left(\dfrac{1}{\sqrt{2}}\right) \subset \mathbb{S}^{2m+1}(1)$ 
(a minimal generalized Clifford torus), 
\end{itemize}
these inclusion maps are both $(\alpha \mathcal{Q}_{1} + \beta \mathcal{Q}_{2})$-maps for any $\alpha, \beta \in \mathbb{R}$ such that 
$\alpha^{2} + \beta^{2} \neq 0$. 
\end{rem} 

%%%%%%%%%%%%%%%%%%%%%%%%%%%%%%%%%%%%%%%%%%%%%%%%%%%%%%%%%%%%%%%%%%%
\subsection{Chern--Federer isoparametric hypersurfaces in space forms}
\label{subsec:CF isoparametric hypersurfaces}

We remark that for a hypersurface $M^{m} \subset N^{m+1}$ 
with a unit normal vector field $\xi$, it holds that 
\begin{equation}
h(X, Y) = \langle A_{\xi}(X), Y \rangle \xi \label{hsurf_2nd_ff}
\end{equation}
for any $X, Y \in \Gamma(TM)$, 
and we may denote the shape operator $A_{\xi}$ by $A$ for simplicity. 

Let $M^{m} \subset N^{m+1}(c)$ be an isoparametric hypersurface, that is, a hypersurface with constant principal curvatures. 
Then the inclusion map $\iota : M^{m} \hookrightarrow N^{m+1}(c)$ gives an isometric immersion by considering the induced metric $g_{M}$ by $\iota$, 
and we have an orthogonal direct sum decomposition as vector bundles 
\begin{equation*}
TM = \bigoplus_{\alpha=1}^{g} E_{\alpha}, 
\end{equation*}
where $g$ denotes the number of distinct principal curvatures and $E_{\alpha}$ are the principal (curvature) distributions. 
We remark that each $E_{\alpha}$ is auto-parallel, that is, 
the following holds 
\begin{equation*}
\nabla_{X} Y \in \Gamma(E_{\alpha}) \quad (X, Y \in \Gamma(E_{\alpha})), 
\end{equation*}
where $\nabla$ denotes the Levi--Civita connection of $(M^{m}, g_{M})$. 
In particular, each $E_{\alpha}$ is integrable. 
More precisely, see \cite[Lemma~3.9]{CR} in detail. 

\begin{thm} \label{isopara} 
Let $M^{m} \subset N^{m+1}(c)$ be an isoparametric hypersurface in a Riemannian space form. 
Then $M^{m}$ is Chern--Federer if and only if it satisfies that 
\begin{equation*}
c(m-1) ({\mathrm{tr}} \, A) - ({\mathrm{tr}} \, A)({\mathrm{tr}} \, A^{2}) 
+ ({\mathrm{tr}} \, A^{3}) = 0.
\end{equation*}
\end{thm}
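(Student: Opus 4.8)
The plan is to apply Theorem~\ref{CF_isom_imm} to the inclusion $\iota:M^m\hookrightarrow N^{m+1}(c)$, using the form (\ref{EL_CF_submfd_3}): $M^m$ is Chern--Federer if and only if
\[
(\top):\ \mathrm{tr}_{g_M}(\nabla\Xi)=0,\qquad (\bot):\ cm(m-1)\mathcal{H}-\mathrm{tr}_{g_M}h(\Xi(\textrm{-}),\textrm{-})=0,
\]
where $\Xi=m A_{\mathcal{H}}-A^C$ and $Q=c(m-1)\,\mathrm{Id}+\Xi$. First I would specialise $\Xi$ to the hypersurface case. Since the normal bundle is spanned by a single unit field $\xi$, the Casorati operator is $A^C=A_{\xi}^{2}=A^{2}$; and since $\tau(\iota)=\mathrm{tr}_{g_M}h=(\mathrm{tr}\,A)\,\xi$, we get $\mathcal{H}=\tfrac{1}{m}(\mathrm{tr}\,A)\,\xi$ and $m A_{\mathcal{H}}=(\mathrm{tr}\,A)A$, so
\[
\Xi=(\mathrm{tr}\,A)A-A^{2},\qquad Q=c(m-1)\,\mathrm{Id}+(\mathrm{tr}\,A)A-A^{2}.
\]
Once $M^m$ is isoparametric the principal curvatures are constant, hence $\Xi$ and $Q$ are polynomials in $A$ with constant coefficients.

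Next I would dispose of the tangential equation $(\top)$. Since $c(m-1)\,\mathrm{Id}$ is parallel, $\nabla\Xi=\nabla Q$, so $(\top)$ is precisely $\mathrm{tr}_{g_M}(\nabla Q)=0$, i.e. the divergence of the Ricci tensor vanishes; by the contracted second Bianchi identity this equals $\tfrac{1}{2}\,\mathrm{grad}(\mathrm{scal}_M)$. For an isoparametric hypersurface $\mathrm{scal}_M=\mathrm{tr}\,Q$ is a fixed polynomial in the (constant) principal curvatures, hence constant, so $(\top)$ holds automatically. Alternatively one verifies $\mathrm{tr}_{g_M}(\nabla\Xi)=0$ by hand: writing $\nabla\Xi=(\mathrm{tr}\,A)\nabla A-(\nabla A)A-A(\nabla A)$ and contracting along a local orthonormal frame adapted to the principal distributions — along which $Ae_i=\lambda(i)e_i$ with each $\lambda(i)$ constant — the contractions $\sum_i(\nabla_{e_i}A)e_i$, $\sum_i(\nabla_{e_i}A)(Ae_i)$ and $\sum_i A\big((\nabla_{e_i}A)e_i\big)$ all vanish, using the Codazzi equation $(\nabla_XA)Y=(\nabla_YA)X$ valid in a space form.

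Finally I would evaluate the normal equation $(\bot)$. From $h(X,Y)=\langle A(X),Y\rangle\,\xi$ in (\ref{hsurf_2nd_ff}) one computes $\mathrm{tr}_{g_M}h(\Xi(\textrm{-}),\textrm{-})=\mathrm{tr}(A\Xi)\,\xi=\big[(\mathrm{tr}\,A)(\mathrm{tr}\,A^{2})-\mathrm{tr}(A^{3})\big]\xi$, while $cm(m-1)\mathcal{H}=c(m-1)(\mathrm{tr}\,A)\,\xi$; comparing the coefficients of $\xi$ gives exactly
\[
c(m-1)(\mathrm{tr}\,A)-(\mathrm{tr}\,A)(\mathrm{tr}\,A^{2})+\mathrm{tr}(A^{3})=0.
\]
The one step that is not purely formal is $(\top)$: it rests on the fact that an isoparametric hypersurface has constant intrinsic curvature invariants, coming from the Gauss equation together with the constancy of the principal curvatures; the rest is bookkeeping with $\Xi$ and traces of powers of $A$.
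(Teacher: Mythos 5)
Your proposal is correct and follows the same overall strategy as the paper: specialise Theorem~\ref{CF_isom_imm} in the form (\ref{EL_CF_submfd_3}) to the hypersurface inclusion, observe that $(\top)$ holds automatically for an isoparametric hypersurface, and reduce $(\bot)$ to $\mathrm{tr}(A\Xi)=\mathrm{tr}\big(A\,((\mathrm{tr}A)A-A^{2})\big)$, which is exactly the content of the paper's Lemma~\ref{isopara_lem2}. The one place you genuinely diverge is the tangential condition: the paper's Lemma~\ref{isopara_lem1} verifies $\mathrm{tr}_{g_M}(\nabla\Xi)=0$ by a direct computation in an eigenframe of $A$, splitting into the cases $\alpha=\beta$ and $\alpha\neq\beta$ and using auto-parallelity of the principal distributions, whereas you invoke the contracted second Bianchi identity, $\mathrm{tr}_{g_M}(\nabla Q)=\tfrac{1}{2}\,\mathrm{grad}(\mathrm{scal}_M)$, together with the constancy of $\mathrm{scal}_M=cm(m-1)+(\mathrm{tr}A)^{2}-\mathrm{tr}(A^{2})$ coming from the Gauss equation. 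Your route is shorter and more conceptual (it makes clear that $(\top)$ is just constancy of scalar curvature), while the paper's computation is self-contained and does not appeal to the Bianchi identity; your fallback Codazzi computation is essentially a reorganisation of the paper's argument and also goes through, since $e_j(\lambda_i)=0$ and $\mathrm{tr}A$ is constant. Both are valid proofs of the same statement.
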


\begin{rem}
Let $M^{m} \subset N^{m+1}(c)$ be an isoparametric hypersurface. 
Then the inclusion map $\iota$ is a $\mathcal{Q}_{1}$-map if and only if 
\begin{equation*}
W_{1}(\iota) = c(\textrm{tr} \, A) - (\textrm{tr} \, A^{3}) = 0, 
\end{equation*}
the inclusion map is a $\mathcal{Q}_{2}$-map (that is, a biharmonic map) if and only if 
\begin{equation*}
W_{2}(\iota) =  \left(mc - (\textrm{tr} \, A^{2})\right) (\textrm{tr} \, A) = 0, 
\end{equation*}
and the inclusion map is a Willmore--Chen map if and only if 
\begin{equation*}
\mathrm{WC}(\iota) = mW_{1}(\iota) - W_{2}(\iota) = (\textrm{tr} \, A)(\textrm{tr} \, A^{2}) 
-m(\textrm{tr} \, A^{3}) = 0. 
\end{equation*}
\end{rem}

\begin{lem} \label{isopara_lem1} 
Let $M^{m} \subset N^{m+1}(c)$ be an isoparametric hypersurface, 
$\iota : M^{m} \hookrightarrow N^{m+1}(c)$ the inclusion map 
and $g_{M}$ the induced metric of $M^{m}$ by $\iota$. 
Then it holds that  
\begin{equation*}
{\mathrm{tr}}_{g_{M}} (\nabla \Xi) = 0. 
\end{equation*}
\end{lem}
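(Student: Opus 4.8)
The plan is to reduce the statement to a fact about the orthogonal projections onto the principal distributions. First I would record that for a hypersurface $M^{m} \subset N^{m+1}(c)$ with unit normal $\xi$ and shape operator $A = A_{\xi}$ one has $\mathcal{H} = \frac{1}{m}(\operatorname{tr} A)\xi$ and, since $k = n-m = 1$, $A^{C} = A^{2}$; hence the operator $\Xi = mA_{\mathcal{H}} - A^{C}$ becomes
\[
\Xi = (\operatorname{tr} A)\,A - A^{2}.
\]
If $M^{m}$ is isoparametric, the principal curvatures $\lambda_{1}, \dots, \lambda_{g}$ (with multiplicities $m_{\alpha}$) are constant, so $\operatorname{tr} A = \sum_{\alpha} m_{\alpha}\lambda_{\alpha}$ is constant, and writing $P_{\alpha}$ for the $g_{M}$-orthogonal projection onto the principal distribution $E_{\alpha}$ we get $A = \sum_{\alpha}\lambda_{\alpha}P_{\alpha}$ and
\[
\Xi = \sum_{\alpha=1}^{g} c_{\alpha}\,P_{\alpha}, \qquad c_{\alpha} := (\operatorname{tr} A)\lambda_{\alpha} - \lambda_{\alpha}^{2} \in \mathbb{R}.
\]
Because the $c_{\alpha}$ are constants and $\nabla$ is metric, $(\nabla_{X}\Xi)(Y) = \sum_{\alpha} c_{\alpha}(\nabla_{X}P_{\alpha})(Y)$, so it is enough to prove $\operatorname{tr}_{g_{M}}(\nabla P_{\alpha}) = 0$ for each $\alpha$.

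To establish this I would compute the trace with respect to a local orthonormal frame $\{e_{i}\}$ adapted to the decomposition $TM = \bigoplus_{\beta} E_{\beta}$, so that each $e_{i}$ is a local section of some $E_{\beta(i)}$ (possible since the $E_{\beta}$ vary smoothly). Fix $\alpha$ and $i$. If $\beta(i) = \alpha$ then $P_{\alpha}e_{i} = e_{i}$ and $(\nabla_{e_{i}}P_{\alpha})(e_{i}) = (\mathrm{Id} - P_{\alpha})(\nabla_{e_{i}}e_{i})$; if $\beta(i) \neq \alpha$ then $P_{\alpha}e_{i} = 0$ and $(\nabla_{e_{i}}P_{\alpha})(e_{i}) = -P_{\alpha}(\nabla_{e_{i}}e_{i})$. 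In either case $\nabla_{e_{i}}e_{i} \in \Gamma(E_{\beta(i)})$ because $E_{\beta(i)}$ is auto-parallel (cf.\ \cite[Lemma~3.9]{CR}), so both expressions vanish. Summing over $i$ yields $\operatorname{tr}_{g_{M}}(\nabla P_{\alpha}) = 0$, hence $\operatorname{tr}_{g_{M}}(\nabla \Xi) = 0$.

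I could also mention a shorter route: by the Gauss equation $\Xi = Q - c(m-1)\mathrm{Id}$, so $\operatorname{tr}_{g_{M}}(\nabla \Xi) = \operatorname{tr}_{g_{M}}(\nabla Q) = \tfrac{1}{2}\operatorname{grad}(\operatorname{scal}_{g_{M}})$ by the contracted second Bianchi identity; since the principal curvatures are constant, $\operatorname{scal}_{g_{M}} = m(m-1)c + (\operatorname{tr} A)^{2} - \operatorname{tr}(A^{2})$ is constant and its gradient vanishes. The only substantive input in either version is the auto-parallelism of the principal distributions of an isoparametric hypersurface, which is already cited above; the remainder is just the bookkeeping of traces together with the fact that constants and the identity tensor are parallel, so I do not anticipate a real obstacle here — the only point requiring a little care is the choice of an adapted orthonormal frame and the verification that the frame vectors may be taken as local sections of a single principal distribution.
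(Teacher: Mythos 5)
Your main argument is correct and is in substance the same as the paper's own proof: the paper takes a principal orthonormal frame with $A(e_i)=\lambda_i e_i$, reduces $\mathrm{tr}_{g_M}(\nabla\Xi)$ to $\sum_{i,j}(\lambda_i\lambda_j-\lambda_i^2)\langle\nabla_{e_i}e_i,e_j\rangle e_j$, and kills each term by exactly your dichotomy (same distribution $\Rightarrow$ the coefficient vanishes; different distributions $\Rightarrow$ $\langle\nabla_{e_i}e_i,e_j\rangle=0$ by auto-parallelism). Your repackaging of $\Xi=(\operatorname{tr}A)A-A^2$ as a constant-coefficient combination $\sum_\alpha c_\alpha P_\alpha$ of the spectral projections, and the reduction to $\mathrm{tr}_{g_M}(\nabla P_\alpha)=0$, is just a cleaner bookkeeping of the same two inputs (constancy of the principal curvatures and auto-parallelism of the $E_\alpha$), so the computation-heavy first part of the paper's proof is replaced by a structural observation but nothing essentially new is used. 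The genuinely different content is your second route: writing $\Xi=Q-c(m-1)\mathrm{Id}$ (which the paper records only in the remark after Theorem~\ref{CF_isom_imm}) and invoking the contracted second Bianchi identity reduces the whole lemma to the constancy of $\operatorname{scal}_{g_M}=m(m-1)c+(\operatorname{tr}A)^2-\operatorname{tr}(A^2)$, bypassing auto-parallelism of the principal distributions entirely. That version is shorter and isolates the weaker hypothesis actually needed (constant scalar curvature rather than constant principal curvatures), at the cost of importing the Bianchi identity; the paper's direct computation has the advantage of staying entirely within the second-fundamental-form formalism it set up for Lemma~\ref{isopara_lem2}.
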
 

\begin{proof}
Let $\{e_{i}\}_{i=1}^{m}$ be an orthonormal frame of $M^{m}$ such that 
\begin{equation*}
A(e_{i}) = \lambda_{i} e_{i}, 
\end{equation*}
where $\lambda_{i}$'s are principal curvatures, which are constant. 
Then we have by using (\ref{hsurf_2nd_ff})
\begin{align*}
\textrm{tr}_{g_{M}} (\nabla \Xi) &= 
\sum_{k=1}^{m} \langle \textrm{tr}_{g_{M}} (\nabla \Xi), e_{k} \rangle e_{k} \\
&= \sum_{i,j,k=1}^{m} \left[ 
\left \langle \nabla_{e_{i}} (A_{h(e_{j}, e_{j})} e_{i}) - (A_{h(e_{j}, e_{j})} \nabla_{e_{i}} e_{i}), e_{k} \right \rangle \right. \\ 
&\qquad \qquad \qquad  \quad 
\left. - \left \langle \nabla_{e_{i}} (A_{h(e_{i}, e_{j})} e_{j}) - (A_{h(\nabla_{e_{i}} e_{i}, e_{j})} e_{j}), e_{k} \right \rangle \right] e_{k} \\
&= \sum_{i,j,k=1}^{m} \left[
-\lambda_{i}\lambda_{j} \delta_{ij} \langle \nabla_{e_{i}}e_{j}, e_{k} \rangle 
+ \lambda_{i}\lambda_{j} \delta_{jk} \langle \nabla_{e_{i}}e_{i}, e_{j} \rangle
 \right] e_{k} \\
&= \sum_{i,j=1}^{m} (\lambda_{i}\lambda_{j} - \lambda_{i}^{2})\langle \nabla_{e_{i}}e_{i}, e_{j} \rangle e_{j}.
\end{align*} 
From the last formula, 
we can claim the following statements 
for $e_{i} \in \Gamma(E_{\alpha}), \, e_{j} \in \Gamma(E_{\beta})$: 
When $\alpha = \beta$, we have 
\begin{equation*}
\lambda_{i} \lambda_{j} - \lambda_{i}^{2} = 0
\end{equation*}
since $\lambda_{i} = \lambda_{j}$. 
When $\alpha \neq \beta$, we have 
\begin{equation*}
\langle \nabla_{e_{i}} e_{i}, e_{j} \rangle = 0
\end{equation*}
since $\nabla_{e_{i}} e_{i} \in \Gamma(E_{\alpha})$ 
and $E_{\alpha}$ is orthogonal to $E_{\beta}$. 
Therefore, we complete the proof. 
\end{proof}

\begin{lem} \label{isopara_lem2} 
Under the assumption of Lemma~\ref{isopara_lem1}, it holds that 
\begin{equation*}
{\mathrm{tr}}_{g_{M}} h(\Xi(\textrm{-}), \textrm{-}) = 
[({\mathrm{tr}} \, A) ({\mathrm{tr}} \, A^{2}) - ({\mathrm{tr}} \, A^{3})]\xi,
\end{equation*}
where $\xi$ is a unit normal vector field of $M^{m}$. 
\end{lem}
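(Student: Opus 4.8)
The plan is to reduce the identity to a purely algebraic trace computation for the shape operator, exploiting that the normal bundle of a hypersurface is one-dimensional. First I would specialize the operators $A^{C}$ and $\Xi$ to the hypersurface situation. Since the codimension is $k = n - m = 1$, the Casorati operator is simply $A^{C} = A_{\xi}^{2} = A^{2}$, where $A = A_{\xi}$; and since $\tau(\varphi) = m\mathcal{H} = \sum_{i} h(e_{i}, e_{i}) = (\textrm{tr}\, A)\,\xi$ by (\ref{hsurf_2nd_ff}), we get $A_{\tau(\varphi)} = (\textrm{tr}\, A)\,A$. Therefore, as $(1,1)$-tensor fields on $M^{m}$,
\[
\Xi = A_{\tau(\varphi)} - A^{C} = (\textrm{tr}\, A)\,A - A^{2}.
\]

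Next I would rewrite the left-hand side using (\ref{hsurf_2nd_ff}) again. Taking the orthonormal eigenframe $\{e_{i}\}_{i=1}^{m}$ with $A e_{i} = \lambda_{i} e_{i}$ used in the proof of Lemma~\ref{isopara_lem1}, and recalling $h(X, Y) = \langle A(X), Y \rangle\,\xi$, one computes
\[
\textrm{tr}_{g_{M}} h(\Xi(\textrm{-}), \textrm{-}) = \sum_{i=1}^{m} h(\Xi(e_{i}), e_{i}) = \sum_{i=1}^{m} \langle A\Xi(e_{i}), e_{i} \rangle\,\xi = (\textrm{tr}\, A\Xi)\,\xi.
\]

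Finally, substituting the expression for $\Xi$ and using linearity of the trace,
\[
\textrm{tr}(A\Xi) = (\textrm{tr}\, A)\,\textrm{tr}(A^{2}) - \textrm{tr}(A^{3}),
\]
which gives the claimed formula; in the eigenframe this is just $\sum_{i} \lambda_{i}\big( (\sum_{j} \lambda_{j})\lambda_{i} - \lambda_{i}^{2} \big) = (\sum_{j} \lambda_{j})(\sum_{i} \lambda_{i}^{2}) - \sum_{i} \lambda_{i}^{3}$. The whole argument is routine bookkeeping; the only point requiring a little care is to correctly collapse the general arbitrary-codimension definitions of the Casorati operator and of $\Xi$ down to the hypersurface case and to keep the unit-normal sign convention consistent, so that $\tau(\varphi) = (\textrm{tr}\, A)\,\xi$. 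There is no genuine analytic obstacle: in fact the isoparametric hypothesis is not used in this lemma at all (it enters only through Lemma~\ref{isopara_lem1}), so the identity holds for an arbitrary hypersurface.
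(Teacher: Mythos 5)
Your proof is correct and follows essentially the same route as the paper: both reduce the claim to the pointwise algebraic identity $\mathrm{tr}_{g_M} h(\Xi(\text{-}),\text{-}) = \mathrm{tr}(A\Xi)\,\xi$ with $\Xi = (\mathrm{tr}\,A)A - A^{2}$ in the codimension-one case, the paper merely writing this out as a double sum over an eigenframe of $A$ instead of invoking linearity of the trace. Your side remark that the isoparametric hypothesis is not needed for this particular lemma is also accurate (it is used only in Lemma~\ref{isopara_lem1}).
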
 

\begin{proof}
Taking an orthonormal frame $\{ e_{i} \}_{i=1}^{m}$ of $M^{m}$ such that 
$A(e_{i}) = \lambda_{i} e_{i}$, we compute by using (\ref{hsurf_2nd_ff}) that 
\begin{align*}
\textrm{tr}_{g_{M}} h(\Xi(\textrm{-}), \textrm{-}) &= 
\sum_{i,j=1}^{m} h(A_{h(e_{j}, e_{j})} e_{i} - A_{h(e_{i}, e_{j})} e_{j}, e_{i}) \\
&= \sum_{i,j=1}^{m} h(e_{i}, \langle A(e_{j}), e_{j} \rangle A(e_{i}) - \langle A(e_{i}), e_{j} \rangle A(e_{j}) ) \\
&= \sum_{i,j=1}^{m} \left[\lambda_{i}^{2} \lambda_{j} - \lambda_{i}^{2} \lambda_{j} \delta_{ij}^{2} \right] \xi 
= [(\textrm{tr} \, A) (\textrm{tr} \, A^{2}) - (\textrm{tr} \, A^{3})]\xi. 
\end{align*}
Thus, the proof is completed. 
\end{proof}

\subsubsection*{Proof of Theorem~\ref{isopara}}
From Lemma~\ref{isopara_lem1} and Lemma~\ref{isopara_lem2}, 
we can see that 
an isoparametric hypersurface $M^{m} \subset N^{m+1}(c)$ is Chern--Federer if and only if it holds that 
\begin{align*}
(\top) \ &: \ \textrm{tr}_{g_{M}} (\nabla \Xi) = 0 \quad (\textrm{trivially holds}), \\
(\bot) \ &: \ cm(m-1)\mathcal{H} - \textrm{tr}_{g_{M}} h(\Xi(\textrm{-}), \textrm{-}) 
= \left[c(m-1) (\textrm{tr} \, A) - (\textrm{tr} \, A) (\textrm{tr} \, A^{2}) 
+ (\textrm{tr} \, A^{3}) \right] \xi = 0. 
\end{align*}
Thus, we obtain the conclusion. 
\qed

Let $\mathbb{L}^{n}$ be a Minkowski $n$-space. 
By using the classification \cite[Theorem~3.12, Theorem~3.14]{CR} of isoparametric hypersurfaces in a Euclidean space $\mathbb{E}^{m+1}$ and a hyperbolic space 
\begin{equation*}
\mathbb{H}^{m+1}(-1) := \{ (x_{1}, \cdots, x_{m+2}) \in \mathbb{L}^{m+2} \mid -x_{1}^{2} + x_{2}^{2} + \cdots + x_{m+2}^{2} = -1, \ x_{1} > 0 \}, 
\end{equation*}  
we have the following results: 

\begin{thm} 
Let $M^{m} \subset \mathbb{E}^{m+1}$ be an isoparametric hypersurface. 
Then $M^{m}$ is Chern--Federer if and only if 
it is congruent to an open portion of one of the following hypersurfaces 
\begin{itemize}
\setlength{\leftskip}{0.5cm}
\item[${\mathrm{[}}g = 1{\mathrm{]}}$] \quad 
$\mathbb{E}^{m} \subset \mathbb{E}^{m+1}$ \ 
$(\textrm{a totally geodesic hyperplane} \, )$, 
\item[${\mathrm{[}}g = 2{\mathrm{]}}$] \quad 
$\mathbb{S}^{1}(r) \times \mathbb{E}^{m-1} \subset \mathbb{E}^{m+1}$ \ 
$(\textrm{a generalized right circular cylinder} \, )$. 
\end{itemize}
\end{thm}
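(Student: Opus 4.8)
The plan is to combine Theorem~\ref{isopara} with the classical classification of isoparametric hypersurfaces in a Euclidean space, and then to test the resulting scalar criterion on the finitely many model hypersurfaces. First I would recall from \cite[Theorem~3.12]{CR} that an isoparametric hypersurface $M^{m}\subset\mathbb{E}^{m+1}$ has at most two distinct principal curvatures, and is congruent to an open portion of exactly one of the following: a totally geodesic hyperplane $\mathbb{E}^{m}\subset\mathbb{E}^{m+1}$ (here $g=1$ and $A=0$); a round hypersphere $\mathbb{S}^{m}(r)\subset\mathbb{E}^{m+1}$ (here $g=1$ and $A=r^{-1}\,\mathrm{id}$ for a suitable choice of unit normal); or a spherical cylinder $\mathbb{S}^{k}(r)\times\mathbb{E}^{m-k}\subset\mathbb{E}^{m+1}$ with $1\le k\le m-1$ (here $g=2$, the principal curvature $r^{-1}$ having multiplicity $k$ and the principal curvature $0$ having multiplicity $m-k$).

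Next I would specialize the Chern--Federer condition of Theorem~\ref{isopara} to the flat case $c=0$, so that it reduces to the single polynomial identity
\[
(\mathrm{tr}\,A^{3}) - (\mathrm{tr}\,A)(\mathrm{tr}\,A^{2}) = 0
\]
in the (constant) principal curvatures. I would then evaluate the left-hand side on each model. For the hyperplane, $A=0$, so the identity holds trivially and $\mathbb{E}^{m}$ is Chern--Federer. For the hypersphere $\mathbb{S}^{m}(r)$ one computes $\mathrm{tr}\,A=m/r$, $\mathrm{tr}\,A^{2}=m/r^{2}$ and $\mathrm{tr}\,A^{3}=m/r^{3}$, so the identity reads $m/r^{3}=m^{2}/r^{3}$, which fails for $m\ge 2$; hence no hypersphere of dimension $\ge 2$ is Chern--Federer. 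For the cylinder $\mathbb{S}^{k}(r)\times\mathbb{E}^{m-k}$ only the block of eigenvalue $r^{-1}$ contributes, so $\mathrm{tr}\,A=k/r$, $\mathrm{tr}\,A^{2}=k/r^{2}$, $\mathrm{tr}\,A^{3}=k/r^{3}$, and the identity becomes $k/r^{3}=k^{2}/r^{3}$, i.e. $k=k^{2}$; since $1\le k\le m-1$ this forces $k=1$, leaving exactly the generalized right circular cylinder $\mathbb{S}^{1}(r)\times\mathbb{E}^{m-1}$.

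Putting these together, the Chern--Federer isoparametric hypersurfaces in $\mathbb{E}^{m+1}$ are precisely the totally geodesic hyperplanes and the generalized right circular cylinders $\mathbb{S}^{1}(r)\times\mathbb{E}^{m-1}$, which is the stated list. I do not expect any genuine obstacle: Theorem~\ref{isopara} has already reduced the Chern--Federer property to a scalar equation in the constant principal curvatures, so the remaining work is the finite check above over the classification list, the only point requiring care being the correct bookkeeping of the eigenvalue multiplicities of the product hypersurface (and the harmless observation that for $m\ge 2$ the hypersphere is genuinely excluded).
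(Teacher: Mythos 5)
Your proposal is correct and follows exactly the route the paper intends: specialize the scalar criterion of Theorem~\ref{isopara} to $c=0$ and test it against the Levi-Civita--Segre classification of isoparametric hypersurfaces in $\mathbb{E}^{m+1}$ (hyperplane, hypersphere, spherical cylinder), which the paper itself cites but leaves as an omitted direct calculation. Your eigenvalue bookkeeping for $\mathbb{S}^{k}(r)\times\mathbb{E}^{m-k}$, yielding $k=k^{2}$ and hence $k=1$, is exactly the computation needed.
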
 

\begin{thm} 
Let $M^{m} \subset \mathbb{H}^{m+1}(-1)$ be an isoparametric hypersurface. 
Then $M^{m}$ is Chern--Federer if and only if it is totally geodesic. 
\end{thm}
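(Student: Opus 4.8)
The plan is to read off the Euler--Lagrange condition from Theorem~\ref{isopara} and then run it through Cartan's classification of isoparametric hypersurfaces in hyperbolic space. Setting $c=-1$ in Theorem~\ref{isopara}, the hypersurface $M^{m}\subset\mathbb{H}^{m+1}(-1)$ is Chern--Federer if and only if
\[
-(m-1)\,\mathrm{tr}\,A-(\mathrm{tr}\,A)(\mathrm{tr}\,A^{2})+\mathrm{tr}\,A^{3}=0.
\]
By \cite[Theorem~3.14]{CR} an isoparametric hypersurface in $\mathbb{H}^{m+1}(-1)$ has at most two distinct (constant) principal curvatures, so I would simply evaluate the left-hand side above in the two cases $g=1$ and $g=2$. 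Here $m\ge 2$; recall that $g=1$ means $M^{m}$ is totally umbilic, and $g=2$ means, by the classification, that $M^{m}$ is (an open piece of) a tube of some radius $t>0$ about a totally geodesic $\mathbb{H}^{k}\subset\mathbb{H}^{m+1}$ with $1\le k\le m-1$, with principal curvatures $\coth t$ and $\tanh t$; in particular in the case $g=2$ the two principal curvatures $\lambda,\mu$ satisfy $\lambda\mu=1$ and have multiplicities $p,q\ge 1$ with $p+q=m$.

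For $g=1$, write $\lambda$ for the common principal curvature, so $\mathrm{tr}\,A^{k}=m\lambda^{k}$; the condition above becomes $-m(m-1)\lambda(1+\lambda^{2})=0$, and since $m(m-1)>0$ and $1+\lambda^{2}>0$ this forces $\lambda=0$, i.e.\ $M^{m}$ is totally geodesic; the converse is clear. For $g=2$, put $h:=\lambda+\mu$ and $T:=\mathrm{tr}\,A$. The minimal polynomial of $A$ is $x^{2}-hx+1$, so $A^{2}=hA-\mathrm{Id}$ and $A^{3}=(h^{2}-1)A-h\,\mathrm{Id}$, whence $\mathrm{tr}\,A^{2}=hT-m$ and $\mathrm{tr}\,A^{3}=(h^{2}-1)T-hm$. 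Substituting these into the Euler--Lagrange condition, a short computation should collapse the left-hand side to $h\,(hT-T^{2}-m)$; since $\lambda\mu=1>0$ the two curvatures have the same sign and $h\neq0$, so the condition is equivalent to $T^{2}-hT+m=0$. Finally, inserting $T=p\lambda+q\mu$, $h=\lambda+\mu$, $m=p+q$ and $\lambda\mu=1$ and expanding, this should reduce to
\[
p(p-1)\lambda^{2}+q(q-1)\mu^{2}+2pq=0,
\]
whose left-hand side is at least $2pq>0$. Hence the case $g=2$ is impossible, and combining the two cases, $M^{m}$ is Chern--Federer if and only if it is totally geodesic.

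The two calculations involved are purely algebraic and routine; the only substantive input, and the place to be careful, is the invocation of \cite[Theorem~3.14]{CR} --- specifically, the fact that for $g=2$ one has exactly two principal curvatures with $\lambda\mu=1$ and with both multiplicities positive. These are precisely the features that make the final polynomial $p(p-1)\lambda^{2}+q(q-1)\mu^{2}+2pq$ strictly positive, so that no non-totally-geodesic isoparametric hypersurface in $\mathbb{H}^{m+1}(-1)$ can be Chern--Federer.
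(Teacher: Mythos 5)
Your proof is correct and follows exactly the route the paper intends: the paper derives this theorem by combining Theorem~\ref{isopara} (with $c=-1$) with Cartan's classification \cite[Theorem~3.14]{CR}, but omits the computation, which you have supplied accurately (the $g=1$ case forces $\lambda=0$, and in the $g=2$ case the relation $\lambda\mu=1$ reduces the condition to $p(p-1)\lambda^{2}+q(q-1)\mu^{2}+2pq=0$, which is impossible). The only caveat, which you already flag, is that one must assume $m\ge 2$, since for $m=1$ every curve is Chern--Federer.
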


In the case of a unit sphere $\mathbb{S}^{m+1}(1)$, 
there exist fruitfully Chern--Federer isoparametric hypersurfaces 
which is not minimal. 
This is a different situation from that of biharmonic isoparametric hypersurfaces in a unit sphere. 
See \cite{IIU} on the classification of biharmonic isoparametric hypersurfaces. 
In this paper, 
we do not classify Chern--Federer isoparametric hypersurfaces 
in $\mathbb{S}^{m+1}(1)$. 
However, we show some examples of Chern--Federer homogeneous hypersurfaces, which are also isoparametric. 
Since all of their proofs are done by direct calculations by using Theorem~\ref{isopara}, detailed calculations are omitted. 
We again remark that $g$ denotes the number of distinct principal curvatures of isoparametric hypersurfaces. 

\subsubsection*{$\bullet$ ${\mathrm{[}}g = 1{\mathrm{]}}$}
The classification is the following totally umbilical hypersurfaces
\begin{equation}
\mathbb{S}^{m}(r) = \left\{ (x, \sqrt{1-r^{2}}) \in \mathbb{E}^{m+2} \mid 
||x||^{2} = r^{2} \right\} \subset \mathbb{S}^{m+1}(1)
\quad (0 < r \leq 1), 
\label{g1}
\end{equation}
where $||\cdot||$ denotes the canonical Euclidean norm of $\mathbb{E}^{m+1}$. 
From this, we obtain: 

\begin{prop} 
The isoparametric hypersurface (\ref{g1}) is Chern--Federer 
if and only if $r = 1$ (totally geodesic one), 
or $r = 1/\sqrt{2}$ (proper biharmonic one). 
\end{prop}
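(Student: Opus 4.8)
The plan is to apply Theorem~\ref{isopara} directly; the only preparatory step is to compute the principal curvature of the totally umbilical hypersurface (\ref{g1}). Since $\mathbb{S}^{m}(r) \subset \mathbb{S}^{m+1}(1)$ is totally umbilical, it is isoparametric with $g = 1$, so $A = \lambda\,\mathrm{id}$ for a single constant $\lambda$, and here $c = 1$. Realizing $\mathbb{S}^{m+1}(1)$ as the unit sphere in $\mathbb{E}^{m+2}$, a point of (\ref{g1}) has the form $p = (x, \sqrt{1-r^{2}})$ with $\|x\|^{2} = r^{2}$, and
\[
\xi = \left( -\tfrac{\sqrt{1-r^{2}}}{r}\, x,\ r \right)
\]
is a unit normal field of $\mathbb{S}^{m}(r)$ in $\mathbb{S}^{m+1}(1)$. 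Differentiating $\xi$ along a tangent vector (which is already tangent to $\mathbb{S}^{m+1}(1)$) and using the Weingarten formula gives $\lambda = \sqrt{1-r^{2}}/r$; as $r$ runs over $(0,1]$, $\lambda$ runs over $[0,\infty)$.

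Next I would substitute $\mathrm{tr}\,A = m\lambda$, $\mathrm{tr}\,A^{2} = m\lambda^{2}$, $\mathrm{tr}\,A^{3} = m\lambda^{3}$ and $c = 1$ into the criterion of Theorem~\ref{isopara}. Its left-hand side factors as
\[
(m-1)(m\lambda) - (m\lambda)(m\lambda^{2}) + m\lambda^{3} = m(m-1)\,\lambda\,(1-\lambda^{2}),
\]
so (for $m \geq 2$) the inclusion $\mathbb{S}^{m}(r) \hookrightarrow \mathbb{S}^{m+1}(1)$ is Chern--Federer if and only if $\lambda = 0$ or $\lambda^{2} = 1$. Since $\lambda \geq 0$ for the chosen normal, this forces $\lambda \in \{0,1\}$, equivalently $r = 1$ or $\sqrt{1-r^{2}}/r = 1$; the latter is equivalent to $1 - r^{2} = r^{2}$, i.e. $r = 1/\sqrt{2}$.

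It remains to identify the two solutions geometrically. The case $\lambda = 0$, i.e. $r = 1$, is the totally geodesic great sphere $\mathbb{S}^{m}(1) \subset \mathbb{S}^{m+1}(1)$. For $r = 1/\sqrt{2}$ we have $\lambda = 1$, and the biharmonicity criterion in the Remark following Theorem~\ref{isopara} gives $W_{2}(\iota) = (mc - \mathrm{tr}\,A^{2})(\mathrm{tr}\,A) = m^{2}\lambda(1-\lambda^{2}) = 0$, while $\tau(\iota) = m\lambda\,\xi \neq 0$; hence this hypersurface is biharmonic but not minimal, that is, proper biharmonic. I expect no deep obstacle in this argument: once $\lambda$ is expressed in terms of $r$ the rest is elementary algebra, and the nearest thing to a subtlety is precisely that computation of $\lambda(r)$ — the sign ambiguity in the choice of $\xi$ being immaterial, since the factor $\lambda(1-\lambda^{2})$ is odd in $\lambda$ and therefore has the same zero set.
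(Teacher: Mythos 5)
Your proposal is correct and is exactly the argument the paper intends: the paper explicitly omits the details, stating that all the propositions in this subsection follow by direct calculation from Theorem~\ref{isopara}, and your computation of $\lambda=\sqrt{1-r^{2}}/r$ followed by the factorization $m(m-1)\lambda(1-\lambda^{2})=0$ is that calculation. The identification of $r=1/\sqrt{2}$ as proper biharmonic via the remark after Theorem~\ref{isopara} is also correct.
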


\subsubsection*{$\bullet$ ${\mathrm{[}}g = 2{\mathrm{]}}$}
The classification is the following Clifford hypersurfaces 
\begin{equation}
\mathbb{S}^{p}(r_{1}) \times \mathbb{S}^{m-p}(r_{2}) \subset \mathbb{S}^{m+1}(1) \quad (r_{1}^{2} + r_{2}^{2} = 1). 
\label{g2}
\end{equation}
We denote the distinct principal curvatures of (\ref{g2}) by $\lambda_{1}, \lambda_{2}$. 
Then by setting 
\begin{equation*}
\lambda := \lambda_{1} = \cot{t} \quad \left(0 < t < \frac{\pi}{2} \right),
\end{equation*}
we have 
\begin{equation*}
\lambda_{2} = \cot{\left(t + \frac{\pi}{2} \right)} 
= -\frac{1}{\cot{t}} = -\frac{1}{\lambda}. 
\end{equation*}
From this, we obtain: 

\begin{prop} 
The isoparametric hypersurface (\ref{g2}) is Chern--Federer 
if and only if $\lambda$ satisfies that 
\begin{equation*}
p(p-1)\lambda^{6} 
- p(2m - p -1)\lambda^{4} 
+ (m-p)(m+p-1)\lambda^{2} 
- (m-p)(m-p-1) = 0.
\end{equation*}
\end{prop}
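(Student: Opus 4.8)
The plan is to apply Theorem~\ref{isopara} directly, since the Clifford hypersurface (\ref{g2}) is isoparametric and sits in $\mathbb{S}^{m+1}(1)$, i.e.\ $c=1$. By that theorem, (\ref{g2}) is Chern--Federer if and only if
\[
(m-1)(\mathrm{tr}\,A) - (\mathrm{tr}\,A)(\mathrm{tr}\,A^{2}) + (\mathrm{tr}\,A^{3}) = 0,
\]
so the whole argument reduces to computing the power sums $\mathrm{tr}\,A$, $\mathrm{tr}\,A^{2}$, $\mathrm{tr}\,A^{3}$ of the shape operator and substituting them in.

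First I would record the eigenvalue data of $A$, which is already set up just before the statement: on $\mathbb{S}^{p}(r_{1}) \times \mathbb{S}^{m-p}(r_{2}) \subset \mathbb{S}^{m+1}(1)$ the principal curvature $\lambda_{1} = \lambda = \cot t$ occurs with multiplicity $p$, and $\lambda_{2} = -1/\lambda$ occurs with multiplicity $m-p$. Hence
\[
\mathrm{tr}\,A = p\lambda - \frac{m-p}{\lambda}, \qquad
\mathrm{tr}\,A^{2} = p\lambda^{2} + \frac{m-p}{\lambda^{2}}, \qquad
\mathrm{tr}\,A^{3} = p\lambda^{3} - \frac{m-p}{\lambda^{3}}.
\]

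Next I would substitute these into the Euler--Lagrange condition above, expand the product $(\mathrm{tr}\,A)(\mathrm{tr}\,A^{2})$, and collect the coefficients of $\lambda^{3}$, $\lambda$, $\lambda^{-1}$, $\lambda^{-3}$. Multiplying the resulting expression by $-\lambda^{3}$ (legitimate since $0 < t < \pi/2$ forces $\lambda \neq 0$) clears denominators and yields exactly
\[
p(p-1)\lambda^{6} - p(2m-p-1)\lambda^{4} + (m-p)(m+p-1)\lambda^{2} - (m-p)(m-p-1) = 0,
\]
which is the claimed equation.

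There is essentially no real obstacle here: the only step requiring a little care is the bookkeeping of the four coefficients and checking that the sign conventions of Theorem~\ref{isopara} (with $c=1$) are respected; everything else is routine polynomial manipulation. Since the paper explicitly defers such calculations, the proof can be presented in this condensed form, simply recording the traces and the final substitution.
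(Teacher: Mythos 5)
Your proposal is correct and is exactly the argument the paper intends: the paper explicitly states that all the $g=2$ (and higher $g$) propositions follow by direct calculation from Theorem~\ref{isopara}, omitting the details. Your trace computations with multiplicities $p$ for $\lambda$ and $m-p$ for $-1/\lambda$, substituted into $c(m-1)(\mathrm{tr}\,A) - (\mathrm{tr}\,A)(\mathrm{tr}\,A^{2}) + (\mathrm{tr}\,A^{3}) = 0$ with $c=1$ and cleared of denominators by $-\lambda^{3}$, reproduce the stated sextic exactly.
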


\subsubsection*{$\bullet$ ${\mathrm{[}}g = 3{\mathrm{]}}$} 
The classification is the following four Cartan hypersurfaces 
\begin{align}
M^{3} &= {SO(3)}/{\mathbb{Z}_{2} \times \mathbb{Z}_{2}} \rightarrow \mathbb{S}^{4}(1), \label{g3_1} \\
M^{6} &= {SU(3)}/{T^{2}} \rightarrow \mathbb{S}^{7}(1), \label{g3_2} \\
M^{12} &= {Sp(3)}/{Sp(1)^{3}} \rightarrow \mathbb{S}^{13}(1), \label{g3_3} \\
M^{24} &= {F_{4}}/{{Spin}(8)} \rightarrow \mathbb{S}^{25}(1). \label{g3_4} 
\end{align}
We denote the distinct principal curvatures of (\ref{g3_1}--\ref{g3_4}) 
by $\lambda_{1}, \lambda_{2}, \lambda_{3}$. 
Then by setting 
\begin{equation*}
\lambda := \lambda_{1} = \cot{t} \quad \left(0 < t < \frac{\pi}{3} \right),
\end{equation*}
we have 
\begin{equation*}
\lambda_{2} = \frac{\lambda - \sqrt{3}}{\sqrt{3} \lambda + 1}, \quad 
\lambda_{3} = -\frac{\lambda + \sqrt{3}}{\sqrt{3} \lambda - 1}.  
\end{equation*}
From this, we obtain: 

\begin{prop} 
The isoparametric hypersurfaces (\ref{g3_1}), (\ref{g3_3}) or (\ref{g3_4}) are Chern--Federer if and only if $\lambda = \sqrt{3}$ (the only minimal one). 

The isoparametric hypersurface (\ref{g3_2}) is Chern--Federer 
if and only if  $\lambda$ satisfies that 
\begin{equation*}
(\lambda^{2} - 3)(3\lambda^{3} - 3\lambda^{2} - 9\lambda + 1)
(3\lambda^{3} + 3\lambda^{2} - 9\lambda - 1) = 0.
\end{equation*}
Namely, there are non-minimal ones in the case. 
\end{prop}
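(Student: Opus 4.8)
The plan is to reduce the statement entirely to the scalar criterion of Theorem~\ref{isopara} together with a short symmetric-function computation in the three principal curvatures.

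First I would collect the structural facts about a $g=3$ isoparametric hypersurface in $\mathbb{S}^{m+1}(1)$. It is classical (cf.~\cite{CR}) that the three principal curvatures share a common multiplicity $\ell$, and that $(\ell,m)$ equals $(1,3)$, $(2,6)$, $(4,12)$, $(8,24)$ for the hypersurfaces (\ref{g3_1})--(\ref{g3_4}) respectively; moreover $\lambda_{k} = \cot\!\bigl(t + \tfrac{(k-1)\pi}{3}\bigr)$ for $k=1,2,3$ with $0 < t < \tfrac{\pi}{3}$, so $\lambda := \lambda_{1} = \cot t$ ranges over $(1/\sqrt{3}, \infty)$. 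Using the triple-angle identity $\cot 3s = (u^{3}-3u)/(3u^{2}-1)$ with $u=\cot s$, one sees that $\lambda_{1},\lambda_{2},\lambda_{3}$ are exactly the roots of $x^{3}-3vx^{2}-3x+v$, where $v := \cot 3t = (\lambda^{3}-3\lambda)/(3\lambda^{2}-1)$. Hence, writing $e_{1},e_{2},e_{3}$ for the elementary symmetric functions of $\lambda_{1},\lambda_{2},\lambda_{3}$, Vieta's formulae give $e_{1}=3v$, $e_{2}=-3$ and $e_{3}=-v=-e_{1}/3$. Newton's identities then yield the power sums $p_{1}=e_{1}$, $p_{2}=e_{1}^{2}+6$, $p_{3}=e_{1}^{3}+8e_{1}$, and since all multiplicities equal $\ell$ we obtain $\mathrm{tr}\,A^{k}=\ell\,p_{k}$ for $k=1,2,3$.

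Next I would substitute these into the Chern--Federer criterion of Theorem~\ref{isopara} with $c=1$ and $m=3\ell$, that is, $(m-1)(\mathrm{tr}\,A) - (\mathrm{tr}\,A)(\mathrm{tr}\,A^{2}) + \mathrm{tr}\,A^{3} = 0$. A routine simplification collapses this to
\[
\ell\,e_{1}\bigl[(1-\ell)\,e_{1}^{2} + 7 - 3\ell\bigr] = 0 .
\]
Since $\mathrm{tr}\,A = \ell e_{1}$ and $e_{1} = (3\lambda^{3}-9\lambda)/(3\lambda^{2}-1)$ with $\lambda > 1/\sqrt{3} > 0$, the vanishing $e_{1}=0$ is equivalent to $\lambda^{2}=3$, i.e.\ $\lambda=\sqrt{3}$, which is precisely the minimal member of the family. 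For $\ell=1$ the bracket equals $4\neq 0$, and for $\ell=4$ or $\ell=8$ the bracket $(1-\ell)e_{1}^{2}+7-3\ell$ is strictly negative for every real $e_{1}$; therefore in the cases (\ref{g3_1}), (\ref{g3_3}), (\ref{g3_4}) the only Chern--Federer hypersurface is the minimal one $\lambda=\sqrt{3}$. For $\ell=2$ the bracket is $1-e_{1}^{2}$, so the criterion becomes $e_{1}\in\{0,1,-1\}$; substituting through $e_{1}=(3\lambda^{3}-9\lambda)/(3\lambda^{2}-1)$ converts the three alternatives into $\lambda^{2}-3=0$, $3\lambda^{3}-3\lambda^{2}-9\lambda+1=0$ and $3\lambda^{3}+3\lambda^{2}-9\lambda-1=0$, so that (\ref{g3_2}) is Chern--Federer exactly when the product of these three polynomials vanishes. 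Finally, evaluating the two cubics at a couple of points (e.g.\ $\lambda=2$ and $\lambda=3$) and invoking the intermediate value theorem exhibits, for each cubic, a root in $(1/\sqrt{3},\infty)$ different from $\sqrt{3}$, which establishes the existence of non-minimal Chern--Federer hypersurfaces when $\ell=2$.

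The bulk of the work — the algebraic collapse to $\ell e_{1}[(1-\ell)e_{1}^{2}+7-3\ell]=0$ — is routine. The one genuine point requiring care is identifying the monic cubic whose roots are the $\lambda_{k}$, equivalently pinning down the relations $e_{1}=3v$, $e_{2}=-3$, $e_{3}=-v$, together with bookkeeping of the admissible interval $\lambda\in(1/\sqrt{3},\infty)$; this is what makes both the equivalence ``$e_{1}\in\{0,\pm1\}$ $\Longleftrightarrow$ the product vanishes'' and the final non-minimality assertion legitimate.
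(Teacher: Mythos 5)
Your proposal is correct and follows the same route the paper indicates (the paper states that these propositions follow by direct calculation from Theorem~\ref{isopara} and omits the details): you apply the scalar criterion $c(m-1)\,\mathrm{tr}\,A-(\mathrm{tr}\,A)(\mathrm{tr}\,A^{2})+\mathrm{tr}\,A^{3}=0$ to the three principal curvatures of common multiplicity $\ell$, and your reduction via $e_{1}=3\cot 3t$, $e_{2}=-3$, $e_{3}=-e_{1}/3$ to $e_{1}\bigl[(1-\ell)e_{1}^{2}+7-3\ell\bigr]=0$ checks out, yielding $e_{1}=0$ for $\ell=1,4,8$ and $e_{1}\in\{0,\pm 1\}$ for $\ell=2$, exactly as claimed. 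Your symmetric-function bookkeeping is simply a clean way of organizing the omitted direct computation, so I consider it essentially the paper's argument with the details supplied.
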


\subsubsection*{$\bullet$ ${\mathrm{[}}g = 4{\mathrm{]}}$}
In this case, we deal with homogeneous hypersurfaces. 
Non-homogeneous isoparametric ones are called to be of \textit{OT--FKM type}. 
The classification of homogeneous hypersurfaces is the following ones 
\begin{align}
M^{8} &= {SO(5)}/{T^{2}} \rightarrow \mathbb{S}^{9}(1), \label{g4_1} \\
M^{18} &= {U(5)}/{SU(2) \times SU(2) \times U(1)} \rightarrow \mathbb{S}^{19}(1), \label{g4_2} \\
M^{30} &= {U(1) \cdot {{Spin}(10)}}/{S^{1} \cdot {{Spin}(6)}} \rightarrow 
\mathbb{S}^{31}(1), \label{g4_3} \\
M^{4m-2} &= {S(U(2) \times U(m))}/{S(U(1) \times U(1) \times U(m-2))} \rightarrow \mathbb{S}^{4m-1}(1) \quad (m \geq 2), \label{g4_4} \\ 
M^{2m-2} &= {SO(2) \times SO(m)}/{\mathbb{Z}_{2} \times SO(m-2)} \rightarrow \mathbb{S}^{2m-1}(1) \quad (m \geq 3), \label{g4_5} \\
M^{8m-2} &= {Sp(2) \times Sp(m)}/{Sp(1) \times Sp(1) \times Sp(m-2)} \rightarrow \mathbb{S}^{8m-1}(1) \quad (m \geq 2). \label{g4_6}
\end{align}
We denote the distinct principal curvatures of (\ref{g4_1}--\ref{g4_6}) 
by $\lambda_{1}, \lambda_{2}, \lambda_{3}, \lambda_{4}$. 
Then by setting 
\begin{equation*}
\lambda := \lambda_{1} = \cot{t} \quad \left(0 < t < \frac{\pi}{4} \right),
\end{equation*}
we have 
\begin{equation*}
\lambda_{2} = \frac{\lambda - 1}{\lambda + 1}, \quad 
\lambda_{3} = -\frac{1}{\lambda}, \quad 
\lambda_{4} = -\frac{\lambda + 1}{\lambda - 1}. 
\end{equation*}
From this, we obtain: 

\begin{prop} 
The isoparametric hypersurface (\ref{g4_1}) is Chern--Federer 
if and only if  $\lambda = 1+\sqrt{2}$ (the only minimal one). 

The isoparametric hypersurface (\ref{g4_2}) is Chern--Federer 
if and only if $\lambda$ satisfies that 
\begin{equation*}
3\lambda^{12} - 40\lambda^{10} + 223\lambda^{8} - 692\lambda^{6} 
+ 223\lambda^{4} - 40\lambda^{2} + 3 = 0, 
\end{equation*}
which is not minimal. 

The isoparametric hypersurface (\ref{g4_3}) is Chern--Federer 
if and only if $\lambda$ satisfies that 
\begin{equation*}
12\lambda^{12} - 111\lambda^{10} + 488\lambda^{8} - 1098\lambda^{6} 
+ 488\lambda^{4} - 111\lambda^{2} + 12 = 0, 
\end{equation*}
which is not minimal. 

The isoparametric hypersurface (\ref{g4_4}) is Chern--Federer 
if and only if $\lambda$ satisfies that 
\begin{align*}
\lambda^{12} - 4(2m-1)\lambda^{10} + (72m-&85)\lambda^{8} 
- 32(4m^{2}-10m+7)\lambda^{6} \\
\qquad \qquad \qquad \qquad 
&+ (72m-85)\lambda^{4} - 4(2m-1)\lambda^{2} + 1 = 0.
\end{align*}

The isoparametric hypersurface (\ref{g4_5}) is Chern--Federer 
if and only if $\lambda$ satisfies that 
\begin{equation*}
(2m-3)\lambda^{8} - 4(5m-9)\lambda^{6} 
+ 2(16m^{2} - 62m + 63)\lambda^{4} 
- 4(5m-9)\lambda^{2} + 2m - 3 = 0. 
\end{equation*}

The isoparametric hypersurface (\ref{g4_6}) is Chern--Federer 
if and only if $\lambda$ satisfies that 
\begin{align*}
3\lambda^{12} - 16m\lambda^{10} + (136m-&117)\lambda^{8} 
- 4(64m^{2}-116m+63)\lambda^{6} \\
\qquad \qquad \qquad \qquad 
&+ (136m-117)\lambda^{4} - 16m\lambda^{2} + 3 = 0.
\end{align*}
\end{prop}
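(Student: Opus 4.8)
The plan is to specialise Theorem~\ref{isopara} to each of the six homogeneous families and reduce the Chern--Federer condition to one algebraic equation in $\lambda = \cot t$. Since the target is the unit sphere we set $c = 1$, so by Theorem~\ref{isopara} an isoparametric hypersurface $M^{m} \subset \mathbb{S}^{m+1}(1)$ with four distinct principal curvatures is Chern--Federer if and only if
\[
(m-1)(\mathrm{tr}\,A) - (\mathrm{tr}\,A)(\mathrm{tr}\,A^{2}) + (\mathrm{tr}\,A^{3}) = 0 ,
\]
where here $m = \dim M$ (not to be confused with the parameter also written $m$ in the statements for the infinite series (\ref{g4_4})--(\ref{g4_6})). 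It is known, by a theorem of M\"unzner (see \cite{CR}), that the four multiplicities satisfy $m_{1} = m_{3} =: p$ and $m_{2} = m_{4} =: q$, so $m = 2(p+q)$ and
\[
\mathrm{tr}\,A^{k} = p\,(\lambda_{1}^{k} + \lambda_{3}^{k}) + q\,(\lambda_{2}^{k} + \lambda_{4}^{k}) \qquad (k = 1, 2, 3),
\]
with $\lambda_{1} = \lambda$, $\lambda_{2} = \frac{\lambda-1}{\lambda+1}$, $\lambda_{3} = -\frac{1}{\lambda}$, $\lambda_{4} = -\frac{\lambda+1}{\lambda-1}$ as recorded just before the proposition. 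First I would substitute these three power sums into the displayed condition and multiply through by $\lambda^{3}(\lambda-1)^{3}(\lambda+1)^{3}$ to clear denominators; after simplifying (and dropping an overall constant as well as any polynomial factor with no real zeros) this yields an algebraic equation in $\lambda$ whose coefficients are affine in $(p,q)$ from the $\mathrm{tr}\,A$ and $\mathrm{tr}\,A^{3}$ terms and quadratic in $(p,q)$ from the cross term $(\mathrm{tr}\,A)(\mathrm{tr}\,A^{2})$. It then remains only to insert, family by family, the multiplicity pair $(p,q)$ from the classification of homogeneous isoparametric hypersurfaces with $g = 4$ in \cite{CR} --- namely $(2,2)$ for (\ref{g4_1}), $(4,5)$ for (\ref{g4_2}), $(6,9)$ for (\ref{g4_3}), and the appropriate $m$-dependent pairs for (\ref{g4_4}), (\ref{g4_5}), (\ref{g4_6}) --- and to collect terms; the stated equations are exactly what comes out.

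Two symmetries of the configuration organise the computation and serve as strong consistency checks. The substitution $\lambda \mapsto -\lambda$, which reflects the angle $t$, carries the multiset $\{\lambda_{1}, \lambda_{2}, \lambda_{3}, \lambda_{4}\}$ to its negative while fixing all multiplicities, so each $\mathrm{tr}\,A^{k}$ picks up a factor $(-1)^{k}$ and the left-hand side of the Chern--Federer equation is \emph{odd} in $\lambda$; this forces the reduced polynomial to involve only even powers of $\lambda$. The substitution $\lambda \mapsto -1/\lambda$, corresponding to $t \mapsto t + \frac{\pi}{2}$, permutes the principal curvatures ($\lambda_{1} \leftrightarrow \lambda_{3}$, $\lambda_{2} \leftrightarrow \lambda_{4}$) and again fixes the multiplicities precisely because $m_{1} = m_{3}$ and $m_{2} = m_{4}$; hence each $\mathrm{tr}\,A^{k}$, and so the whole Chern--Federer expression, is \emph{invariant}, which makes the reduced polynomial palindromic in $\lambda^{2}$. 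Both features are visible in all six answers; for instance the coefficient string $3,\ -16m,\ 136m-117,\ -4(64m^{2}-116m+63),\ 136m-117,\ -16m,\ 3$ of (\ref{g4_6}) has only even powers and is palindromic. If preferred, one may instead use the trigonometric identities for $\sum_{i} \cot^{k}(t + (i-1)\pi/4)$ to write the $\mathrm{tr}\,A^{k}$ directly in terms of $\cot 2t$ and $\tan 2t$, trading the rational-function manipulation for polynomial identities in those quantities.

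I expect the only real difficulty to be bookkeeping. Since $\mathrm{tr}\,A^{3}$ involves the cubes of the rational principal curvatures, once $\lambda^{3}(\lambda-1)^{3}(\lambda+1)^{3}$ is cleared one must track the dependence on the family parameter correctly --- in particular the quadratic coefficients such as $64m^{2}-116m+63$ in (\ref{g4_6}) and $16m^{2}-62m+63$ in (\ref{g4_5}) --- across six separate families, which leaves ample room for arithmetic slips. The two symmetries above, agreement in the equal-multiplicity case with the value $\lambda = 1+\sqrt{2}$ at which the minimal member of that family occurs, and consistency with the propositions already proved for $g \le 3$ are the safeguards I would rely on. Finally, as the statement already reflects, only the homogeneous isoparametric hypersurfaces with $g = 4$ are treated here; the non-homogeneous OT--FKM examples are deliberately left aside.
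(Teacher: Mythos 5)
Your proposal is correct and is essentially the paper's own argument: the authors state that all of these propositions follow by direct calculation from Theorem~\ref{isopara} (with $c=1$, the principal curvatures $\cot\bigl(t+(k-1)\pi/4\bigr)$ and the M\"unzner multiplicities $m_1=m_3$, $m_2=m_4$ for each family) and omit the details. Your two symmetry checks ($\lambda\mapsto-\lambda$ giving evenness and $\lambda\mapsto-1/\lambda$ giving palindromicity of the cleared polynomial) are sound and are a sensible way to guard against the bookkeeping errors that are the only real risk here.
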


\subsubsection*{$\bullet$ ${\mathrm{[}}g = 6{\mathrm{]}}$} 
The classification is the following two homogeneous hypersurfaces 
\begin{align}
M^{6} &= {SO(4)}/{\mathbb{Z}_{2} \times \mathbb{Z}_{2}} \rightarrow \mathbb{S}^{7}(1), \label{g6_1} \\
M^{12} &= {G_{2}}/{T^{2}} \rightarrow \mathbb{S}^{13}(1). \label{g6_2} 
\end{align}
We denote the distinct principal curvatures of (\ref{g6_1}), (\ref{g6_2}) 
by $\lambda_{1}, \lambda_{2}, \lambda_{3}, \lambda_{4}, \lambda_{5}, \lambda_{6}$. 
Then by setting 
\begin{equation*}
\lambda := \lambda_{1} = \cot{t} \quad \left(0 < t < \frac{\pi}{6} \right),
\end{equation*}
we have 
\begin{equation*}
\lambda_{2} = \frac{\sqrt{3} \lambda - 1}{\lambda + \sqrt{3}}, \ 
\lambda_{3} = \frac{\lambda - \sqrt{3}}{\sqrt{3} \lambda + 1}, \ 
\lambda_{4} = -\frac{1}{\lambda}, \ 
\lambda_{5} = -\frac{\lambda + \sqrt{3}}{\sqrt{3}\lambda - 1}, \ 
\lambda_{6} = -\frac{\sqrt{3}\lambda + 1}{\lambda - \sqrt{3}}. 
\end{equation*}
From this, we obtain: 

\begin{prop} 
The isoparametric hypersurfaces (\ref{g6_1}) or (\ref{g6_2}) are Chern--Federer 
if and only if $\lambda = 2+\sqrt{3}$ (the only minimal one). 
\end{prop}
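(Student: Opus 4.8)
The plan is to apply Theorem~\ref{isopara} directly. Recall that for $g = 6$ all principal curvatures of an isoparametric hypersurface in a sphere share a common multiplicity $k$, with $k = 1$ for (\ref{g6_1}) and $k = 2$ for (\ref{g6_2}); in particular $m = 6k$, and here $c = 1$. If $\{e_i\}$ diagonalizes $A = A_{\xi}$ with the $\lambda_j$ occurring with multiplicity $k$, then $\mathrm{tr}\,A^{r} = k\sum_{j=1}^{6}\lambda_j^{r}$ for all $r$, so by Theorem~\ref{isopara} the hypersurface is Chern--Federer if and only if
\[
(6k-1)\,p_1 - k\,p_1 p_2 + p_3 = 0, \qquad p_r := \sum_{j=1}^{6}\lambda_j^{r}.
\]

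First I would evaluate $p_1, p_2, p_3$. Writing $\lambda = \cot t$ as in the statement, the identities listed before the proposition show $\lambda_j = \cot\!\bigl(t + (j-1)\tfrac{\pi}{6}\bigr)$ for $j = 1,\dots,6$, so that $\cot\!\bigl(6(t + (j-1)\tfrac{\pi}{6})\bigr) = \cot(6t) =: c_0$ for every $j$. Using $(\cot\theta + i)^6 = (\cos 6\theta + i\sin 6\theta)/\sin^{6}\theta$, this exhibits $\lambda_1,\dots,\lambda_6$ as the six distinct roots of the monic polynomial
\[
\bigl(u^6 - 15u^4 + 15u^2 - 1\bigr) - c_0\bigl(6u^5 - 20u^3 + 6u\bigr) = 0 .
\]
Reading off the elementary symmetric functions from the coefficients and applying Newton's identities gives
\[
p_1 = 6c_0, \qquad p_2 = 36c_0^{2} + 30, \qquad p_3 = 216c_0^{3} + 210c_0 .
\]

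Substituting into the Chern--Federer condition above and collecting powers of $c_0$, I expect the condition to reduce to
\[
c_0\bigl[(204 - 144k) + 216(1-k)\,c_0^{2}\bigr] = 0 .
\]
When $k = 1$ the bracket equals $60$, and when $k = 2$ it equals $-(84 + 216c_0^{2})$; in both cases it is nowhere zero, so the equation is equivalent to $c_0 = \cot(6t) = 0$. Since $0 < t < \tfrac{\pi}{6}$ gives $0 < 6t < \pi$, this forces $6t = \tfrac{\pi}{2}$, i.e.\ $\lambda = \cot\tfrac{\pi}{12} = 2 + \sqrt{3}$. Equivalently, after clearing denominators, $c_0 = 0$ amounts to $\lambda^6 - 15\lambda^4 + 15\lambda^2 - 1 = (\lambda^2-1)(\lambda^4 - 14\lambda^2 + 1) = 0$, whose only root with $\lambda = \cot t > \cot\tfrac{\pi}{6} = \sqrt{3}$ is $\lambda = 2 + \sqrt{3}$. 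Finally, minimality of the hypersurface means $\mathrm{tr}\,A = 6k c_0 = 0$, i.e.\ again $c_0 = 0$; hence the Chern--Federer hypersurface in this family coincides with the minimal one.

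The argument is routine once one recognizes $\lambda_1,\dots,\lambda_6$ as the roots of the $\cot(6\theta)$-polynomial; the only places demanding care are the bookkeeping in Newton's identities, the final collection of terms, and the standard input that for $g = 6$ the multiplicities are equal so that $m = 6k$. Thus the ``obstacle'' here is purely computational accuracy, with no conceptual difficulty.
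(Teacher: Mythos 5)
Your proposal is correct and follows exactly the route the paper indicates (the paper omits the details, stating only that the result follows by direct calculation from Theorem~\ref{isopara}): with $m=6k$, $c=1$, and equal multiplicities $k=1$ for (\ref{g6_1}) and $k=2$ for (\ref{g6_2}), the condition reduces to $c_0\bigl[(204-144k)+216(1-k)c_0^2\bigr]=0$ with $c_0=\cot 6t$, and I have checked your values $p_1=6c_0$, $p_2=36c_0^2+30$, $p_3=216c_0^3+210c_0$ as well as the final reduction to $\lambda=\cot(\pi/12)=2+\sqrt{3}$, which is precisely the minimal case since $\operatorname{tr}A=6kc_0$. Your organization of the computation via the $\cot(6\theta)$ polynomial and Newton's identities is a clean way to carry out the brute-force substitution the paper leaves to the reader.
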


%%% Please do NOT use ``\bysame'' command in your bibliography list.

\end{document}